\documentclass{amsart}

\usepackage{graphicx}
\usepackage{latexsym}
\usepackage{amssymb, latexsym}
\usepackage{amsmath}
\usepackage{mathrsfs}
\usepackage{amsxtra}
\usepackage{amsthm}
\usepackage{url}
\usepackage{verbatim}
\bibliographystyle{alpha}
\newtheorem{theorem}{Theorem}[section]
\newtheorem{corollary}[theorem]{Corollary}

\newtheorem{lemma}[theorem]{Lemma}

\newtheorem{proposition}[theorem]{Proposition}

\theoremstyle{definition}
\newtheorem{definition}[theorem]{Definition}

\newtheorem{question}[theorem]{Question}

\newcommand{\image}{\mathbin{\hbox{\tt\char'42}}}

\newcommand{\smallleq}{\mathrel{\mathchoice{\raise2pt\hbox{$\scriptstyle\leq$}}{\raise1pt\hbox{$\scriptstyle\leq$}}{\raise1pt\hbox{$\scriptscriptstyle\leq$}}{\scriptscriptstyle\leq}}}
\newcommand{\smalllt}{\mathrel{\mathchoice{\raise2pt\hbox{$\scriptstyle<$}}{\raise1pt\hbox{$\scriptstyle<$}}{\raise0pt\hbox{$\scriptscriptstyle<$}}{\scriptscriptstyle<}}}
\newcommand{\Add}{\mathop{\rm Add}}

\newcommand{\GCH}{{\rm GCH}}

\newcommand{\ZFC}{{\rm ZFC}}

\newcommand{\la}{\langle}
\newcommand{\ra}{\rangle}

\newcommand{\crit}{\text{crit}}

\newcommand{\dom}{\text{dom}}

\newcommand{\Ord}{{\rm Ord}}
\newcommand{\cL}{{\mathbb L}}

\newcommand{\ULS}{{\mathrm{ULST}}}
\newcommand{\SULS}{{\mathrm {SULST}}}
\newcommand{\sL}{{\mathcal L}}

\newcommand{\Tr}{\rm{Tr}}

\author{Victoria Gitman}
\address[V. Gitman]{Mathematics Department, CUNY Graduate Center, NY, USA}
\email{vgitman@gmail.com}
\author{Jonathan Osinski}
\address[J. Osinski]{Department of Mathematics, University of Hamburg, Bundesstraße 55, 20146 Hamburg, Germany}
\email{jonathan.osinski@uni-hamburg.de}

\title{Upward L\"owenheim-Skolem-Tarski numbers for abstract logics}
\begin{document}
\begin{abstract}
Galeotti, Khomskii and V\"a\"an\"anen recently introduced the notion of the upward L\"owenheim-Skolem-Tarski number for a logic, strengthening the classical notion of a Hanf number. A cardinal $\kappa$ is the \emph{upward L\"owenheim-Skolem-Tarski number} ($\ULS$) of a logic $\mathcal L$ if it is the least cardinal with the property that whenever $M$ is a model of size at least $\kappa$ satisfying a sentence $\varphi$ in $\mathcal L$, then there are arbitrarily large models satisfying $\varphi$ and having $M$ as a substructure. The substructure requirement is what differentiates the $\ULS$ number from the Hanf number and gives the notion large cardinal strength. While it is a theorem of $\ZFC$ that every logic has a Hanf number, Galeotti, Khomskii and V\"a\"an\"anen showed that the existence of the $\ULS$ number for second-order logic implies the existence of a partially extendible cardinal. We answer positively their conjecture that the $\ULS$ number for second-order logic is the least extendible cardinal.

We define the \emph{strong $\ULS$} number by strengthening the substructure requirement to elementary substructure. We investigate the $\ULS$ and strong $\ULS$ numbers for several classical strong logics: infinitary logics, the equicardinality logic, logic with the well-foundedness quantifier, second-order logic, and sort logics. We show that the $\ULS$ and the strong $\ULS$ numbers are characterized in some cases by classical large cardinals and in some cases by natural new large cardinal notions that they give rise to. We show that for some logics the notions of the $\ULS$ number, strong $\ULS$ number and least strong compactness cardinal coincide, while for others, it is consistent that they can be separated. Finally, we introduce a natural large cardinal notion characterizing strong compactness cardinals for the equicardinality logic.
\end{abstract}
\maketitle
\section{Introduction}
First-order logic relies, however minimally, on the set-theoretic background universe in which we work because it uses properties of the natural numbers. Tarski's definition of truth, for instance, requires recursion. The dependence on the set-theoretic background universe becomes more apparent when we consider stronger logics, such as infinitary logics and second-order logic. In these logics, we can express more properties of the models at the cost of having to use more of the set-theoretic universe to define the logic itself. Niceness properties of strong logics have been shown to be connected to and often equivalent to the existence of large cardinals. For many classical strong logics, for instance, the existence of a strong compactness cardinal is equivalent to the existence of some classical large cardinal. In this article, we investigate upward L\"owenheim-Skolem principles for various classical logics\footnote{We will give a formal definition of a logic in Section~\ref{sec:abstractLogics}.} and show that these principles are equivalent to the existence of large cardinals, some classical and some new.

The \emph{upward L\"owenheim-Skolem Theorem} for first-order logic says that any infinite structure has arbitrarily large elementary superstructures. A very weak version of the upward L\"owenheim-Skolem Theorem holds for all logics. Recall that the \emph{Hanf number} of a logic $\sL$ is the least cardinal $\delta$ such that for every language $\tau$ and $\mathcal L(\tau)$-sentence $\varphi$, if a $\tau$-structure $M\models_{\sL}\varphi$ has size $\gamma\geq\delta$, then for every cardinal $\overline\gamma>\gamma$, there is a $\tau$-structure $\overline M$ of size at least $\overline\gamma$ such that $\overline M\models_{\sL}\varphi$. The upward L\"owenheim-Skolem Theorem implies, in particular, that the Hanf number of first-order logic is $\omega$. $\ZFC$ proves that every logic has a Hanf number (see, for instance, \cite[Chapter II, Theorem 6.1.4]{Barwise:ModelTheoreticLogics}). Galeotti, Khomskii and V\"a\"an\"anen strengthened the notion of the Hanf number by requiring that the model $\overline M\models_{\sL}\varphi$ of size at least $\overline\gamma$ is a (not necessarily elementary) superstructure of $M$.

\begin{definition}[\cite{GaleottiKhomskiiVaananen:upwardLowenheimSkolem}]\label{def:ULS}
Fix a logic $\sL$. The \emph{upward L\"owenheim-Skolem-Tarski number} $\ULS(\sL)$, if it exists, is the least cardinal $\delta$ such that for every language $\tau$ and $\mathcal L(\tau)$-sentence $\varphi$, if a $\tau$-structure $M\models_{\sL}\varphi$ has size $\gamma\geq\delta$, then for every cardinal $\overline\gamma>\gamma$, there is a $\tau$-structure $\overline M$ of size at least $\overline\gamma$ such that $\overline M\models_{\sL}\varphi$ and $M\subseteq \overline M$ is a substructure of $\overline M$.
\end{definition}
\noindent The differences in the definitions of the Hanf and ULST numbers mirror differences in notions of varying strengths generalizing the \emph{downward} L\"owenheim-Skolem Theorem. The \emph{L\"owenheim-Skolem} number $\text{LS}(\mathcal L)$ of a logic $\mathcal L$ is the smallest cardinal $\kappa$ such that any satisfiable sentence $\varphi$ of $\mathcal L$ has a model of size smaller than $\kappa$. Similar to the ULST number, the L\"owenheim-Skolem-\emph{Tarski} number $\text{LST}(\mathcal L)$ adds the requirement, that any model of $\varphi$ has to have a substructure of size smaller than $\kappa$ satisfying $\varphi$. As with the Hanf number, it can be shown in $\ZFC$ that every logic has an LS number. On the other hand, there is a long history going back to \cite{Magidor:Compactness} that shows that $\text{LST}$ numbers of logics can have large cardinal strength (see \cite{MagidorVaananen:LSTnumbers}).

Similar to this gain in strength that occurs when switching attention from LS numbers to LST numbers, Galeotti, Khomskii and V\"a\"an\"anen showed that the existence of the $\ULS$ number for second-order logic, $\ULS(\cL^2)$, implies the existence of very strong large cardinals.
\begin{theorem}[\cite{GaleottiKhomskiiVaananen:upwardLowenheimSkolem}]
If $\ULS(\cL^2)$ exists, then for every $n \in  \omega$ there is an $n$-extendible cardinal $\lambda \leq \ULS(\cL^2)$.
\end{theorem}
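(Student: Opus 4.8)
The strategy is to use the $\ULS$ property to manufacture, for a single sentence $\varphi$ of $\cL^2$, a nontrivial first-order elementary embedding $j\colon\langle V_{\delta+\omega},\in\rangle\to\langle V_{\mu'+\omega},\in\rangle$ with $j(V_\delta)=V_{\mu'}$, where $\delta=\ULS(\cL^2)$; one then reads off that $\lambda:=\crit(j)\le\delta$ is $n$-extendible for every $n$ simultaneously. The point that makes the $\ULS$ property — which only furnishes a substructure, not an elementary substructure — strong enough for this is that one can build $\varphi$ so that it axiomatises a \emph{Skolemised} structure: second-order logic can define a truth predicate, so a single $\cL^2$-sentence can demand the presence of Skolem functions for all first-order formulas, and then \emph{substructures in that language are automatically elementary}.

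First I would write down an $\cL^2$-sentence $\varphi$ in a language $\{E,<,F,c\}$ ($<,F$ binary, $c$ a constant) whose models are, up to isomorphism, exactly the structures $\mathcal S_\mu:=\langle V_{\mu+\omega},\in,<_\mu,F_\mu,V_\mu\rangle$, with $\mu$ ranging over limit ordinals, $<_\mu$ a fixed definable well-ordering of $V_{\mu+\omega}$, and $F_\mu$ encoding the Skolem functions for the first-order $\{E,<\}$-formulas (with respect to $<_\mu$). The clauses of $\varphi$ are: $E$ is well-founded (a $\Pi^1_1$ assertion) and extensional; $<$ well-orders the universe; routine second-order coding forces $c$ to be a rank-initial segment $V_\mu$ of $V$ of limit height $\mu$ and the universe to be $\bigcup_{k<\omega}\mathcal P^k(c)=V_{\mu+\omega}$ — here one uses second-order assertions that every bounded-rank subclass of $c$, and of the universe, is realised by an element, to exclude spurious models; $<$ coincides with the canonical well-ordering; and $F$ obeys the defining recursion for the Skolem functions, where first-order satisfaction is expressed by means of a second-order satisfaction relation that $\varphi$ asserts to exist and to obey Tarski's clauses (over a well-founded structure this pins it down uniquely). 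One works over $V_{\mu+\omega}$ rather than a single level so that the domain is closed under coding finite tuples, which is what lets the Skolem bookkeeping run. Checking that $\varphi$ really is one second-order sentence categorically describing the $\mathcal S_\mu$ is the most laborious part, but it is entirely routine.

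Next I apply the $\ULS$ property. As $\delta$ is a cardinal it is a limit ordinal, so $M_0:=\mathcal S_\delta\models\varphi$, and $|M_0|=|V_{\delta+\omega}|\ge|\delta|=\delta$. By the $\ULS$ property there is $\bar M\models\varphi$ with $|\bar M|>|M_0|$ and $M_0\subseteq\bar M$ a substructure. Categoricity gives an isomorphism $\pi\colon\bar M\to\mathcal S_{\mu'}$ for some limit ordinal $\mu'$, and $|V_{\mu'+\omega}|=|\bar M|>|V_{\delta+\omega}|$ forces $\mu'>\delta$. Now the key step: because $\bar M\models\varphi$, its $F$ furnishes genuine Skolem functions for the first-order $\{E,<\}$-theory of $\bar M$, and $M_0$, being a substructure, is closed under $F^{\bar M}$; since the relevant G\"odel codes and the tuple-codes of parameters from $M_0$ all lie in $M_0$, the Tarski--Vaught criterion yields $M_0\prec\bar M$ as $\{E,<\}$-structures. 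Composing, $j:=\pi\restriction M_0$ is a first-order elementary embedding $\langle V_{\delta+\omega},\in\rangle\to\langle V_{\mu'+\omega},\in\rangle$; because the constant $c$ is preserved along $M_0\subseteq\bar M\xrightarrow{\pi}\mathcal S_{\mu'}$ we get $j(V_\delta)=V_{\mu'}$, hence $j(\delta)=\operatorname{rank}(j(V_\delta))=\mu'>\delta$, so $j$ is nontrivial and $\lambda:=\crit(j)\le\delta$.

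Finally, fix any $n$. Since $\lambda+n<\delta+\omega$ and both $\alpha\mapsto V_{\alpha+n}$ and first-order satisfaction over a rank-segment are $\in$-definable over $\langle V_{\delta+\omega},\in\rangle$, the restriction $j\restriction V_{\lambda+n}\colon V_{\lambda+n}\to V_{j(\lambda)+n}$ is elementary with critical point $\lambda$; the usual argument (pushing a putative surjection $\lambda\to\lambda_0<\lambda$ through $j$) shows $\lambda$ is a cardinal, so $j(\lambda)>\lambda$ is a cardinal and a fortiori $j(\lambda)>\lambda+n$. Hence $\lambda\le\delta=\ULS(\cL^2)$ is $n$-extendible, which in particular gives the asserted statement. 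The hard part throughout is the first step: once $\varphi$ is correctly set up so that ``substructure'' is upgraded to ``elementary substructure'', the conversion of the resulting elementary embedding of rank-segments into $n$-extendibility is a short chase.
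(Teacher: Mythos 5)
Your overall architecture (use $\ULS(\cL^2)$ on a single categorical $\cL^2$-sentence describing structures of the form $\langle V_{\mu+\omega},\in,\dots,V_\mu\rangle$, extract an elementary $j\colon V_{\delta+\omega}\to V_{\mu'+\omega}$ with $j(\delta)=\mu'>\delta$, then restrict to $V_{\lambda+n}$) is sound in outline, and the final conversion steps (elementarity of $j\restriction V_{\lambda+n}$ via definability of set-satisfaction, $\lambda=\crit(j)$ a cardinal, hence $j(\lambda)>\lambda+n$) are fine. The genuine gap is the step you call routine: upgrading ``substructure'' to ``elementary substructure'' via a \emph{single} binary $F$ that encodes all Skolem functions through internal G\"odel- and tuple-coding. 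Being a substructure only gives closure of $M_0$ under the interpretations of the \emph{function symbols of the language}; it does not give closure under $\bar M$'s pairing/tupling, nor does it guarantee that $\bar M$ interprets the ``real'' codes sitting inside $M_0$ the way you intend. Concretely, write $\pi\colon\bar M\to\mathcal S_{\mu'}$ for the isomorphism and $j=\pi\restriction M_0$. In any instance where the theorem has content, $j$ moves things (indeed $j(V_\delta)=V_{\mu'}\ne V_\delta$), so the inclusion $M_0\subseteq\bar M$ is \emph{not} an $E$-end-extension: $\bar M$ sees new $E$-elements inside old sets of $M_0$, $j$ need not fix the hereditarily finite G\"odel codes, and the element that $\bar M$ regards as the code of a tuple $(b_1,\dots,b_k)$ of parameters from $M_0$ need not lie in $M_0$ at all. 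Hence $F^{\bar M}$ evaluated at the ``relevant G\"odel codes and tuple-codes'' either is being evaluated at the wrong objects or at objects outside $M_0$, and the Tarski--Vaught test is not verified; your sentence ``since the relevant G\"odel codes and the tuple-codes of parameters from $M_0$ all lie in $M_0$'' is exactly where the argument breaks. (A smaller slip: there is in general no \emph{definable} well-ordering of $V_{\mu+\omega}$; this is harmless, since an arbitrary well-order works and you never need uniqueness of the expansion.)

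The gap is repairable, but only by building the coding apparatus into the language so that substructures respect it: e.g.\ add a function symbol for pairing (so tuple-codes of parameters from $M_0$ are term values and stay in $M_0$, and $j$ commutes with them) and generate the formula codes as values of closed terms (say from a constant for $\emptyset$ and an ``adjoin'' function), with $\varphi$ pinning these symbols down to their standard meanings over the (well-founded, collapsed) model; then a single Skolem axiom quantifying over codes does yield $M_0\prec\bar M$. This is in effect the role played in the paper by the truth-predicate device: the paper's proof of the stronger theorem (that $\ULS(\cL^2)$ is the least extendible cardinal) works with $(H_{\rho^+},\in,\delta,\alpha^*,\beta^*,\rho,\Tr)$ and obtains elementarity of the collapsed embedding from the truth predicate, then restricts to $V_{\alpha^*+1}$ to contradict maximality of the extendibility degrees. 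Your route, once patched, is a legitimately different and arguably more direct way to get the weaker cited statement (all $n$-extendibility at once from one embedding $V_{\delta+\omega}\to V_{\mu'+\omega}$), but as written the key elementarity step does not follow from the substructure property.
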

\noindent They further conjectured that the strength of the existence of $\ULS(\cL^2)$ is exactly that of an extendible cardinal. We positively answer their conjecture.
\begin{theorem}
If $\ULS(\cL^2)$ exists, then it is the least extendible cardinal.
\end{theorem}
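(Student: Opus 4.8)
The plan is to prove two statements whose conjunction gives the theorem. First, \emph{every extendible cardinal has the $\ULS$-property for $\cL^2$}, so that once we know an extendible cardinal exists, $\ULS(\cL^2)$ is at most the least one. Second, \emph{if $\kappa:=\ULS(\cL^2)$ exists then $\kappa$ is itself extendible}. Combining the two, $\kappa$ is an extendible cardinal that is $\le$ the least extendible cardinal, so it \emph{is} the least extendible cardinal.

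For the first statement, suppose $\mu$ is extendible, $M\models_{\cL^2}\varphi$ with $|M|\ge\mu$, and a target cardinal $\overline\gamma$ is given; we may assume the language and $\varphi$ lie in $V_\mu$. Pick $\theta$ large enough that $M\in V_\theta$, $\overline\gamma<\theta$, and $V_\theta$ correctly computes the $\cL^2$-satisfaction relation of $M$, and use extendibility to get an elementary $j\colon V_\theta\to V_{\theta'}$ with $\crit(j)=\mu$ and $j(\mu)>\theta$. Then $j(M)\models_{\cL^2}\varphi$ in $V_{\theta'}$ and $|j(M)|=j(|M|)\ge j(\mu)>\theta>\overline\gamma$. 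Since $j\restrict M$ is an isomorphism of $M$ onto the substructure $j[M]$ of $j(M)$, after relabelling the universe of $j(M)$ so that $j[M]$ becomes $M$ we obtain $\overline M\cong j(M)$ with $M\subseteq\overline M$ a substructure, $\overline M\models_{\cL^2}\varphi$, and $|\overline M|\ge\overline\gamma$. Hence $\mu$ has the $\ULS$-property, so $\ULS(\cL^2)\le\mu$.

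For the second statement, write $\kappa=\ULS(\cL^2)$; one first checks that $\kappa$ is inaccessible. It suffices to produce, for cofinally many ordinals $\alpha$, an elementary embedding $j\colon(V_\alpha,\in)\to(V_\beta,\in)$ with $\crit(j)\le\kappa$ and $j(\kappa)>\alpha$: restricting such embeddings and applying a pigeonhole argument on the critical points produces a fixed $\mu\le\kappa$ witnessing extendibility, whereupon the first statement forces $\ULS(\cL^2)\le\mu\le\kappa=\ULS(\cL^2)$, so $\mu=\kappa$ and $\kappa$ is extendible. To obtain the embeddings one applies the $\ULS$-property of $\kappa$ to structures of the shape $M=\langle V_\alpha,\in,\kappa,\mathrm{Sat}_{(V_\alpha,\in)},(c_x)_{x\in V_\omega}\rangle$, where $\kappa$ is a distinguished constant, $\mathrm{Sat}_{(V_\alpha,\in)}$ is the first-order satisfaction relation of $(V_\alpha,\in)$ (a legitimate second-order object over that structure), and the $c_x$ are constants naming the hereditarily finite sets. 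The sentence $\varphi\in\cL^2$ asserts: the $\in$-reduct is well-founded, extensional, and carries a total rank function, hence is isomorphic to a rank-initial segment $(V_\gamma,\in)$; the predicate $\mathrm{Sat}$ obeys the Tarski recursion for the $\in$-reduct (so it is the genuine satisfaction relation); each $c_x$ is determined by the finite list $\{c_y:y\in x\}$ of its members (so the hereditarily finite part is rigid); the marked ordinal $\delta$ is inaccessible; and the height $\gamma$ is located relative to $\delta$ in a way that pins it down, concretely that $\gamma$ is least such that $V_\gamma$ internally computes $\delta$ to be the $\ULS$-number of $\cL^2$. For a suitable cofinal class of $\alpha$ one has $M\models_{\cL^2}\varphi$ and $|M|=|V_\alpha|\ge\kappa$. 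Applying the $\ULS$-property, we get $\overline M\supseteq M$ with $\overline M\models_{\cL^2}\varphi$ and $|\overline M|$ arbitrarily large; its $\in$-reduct collapses by a transitive collapse $\pi$ to some $(V_{\bar\alpha},\in)$, the constants $c_x$ force $\pi$ to fix $V_\omega$ pointwise, and the clause on $\mathrm{Sat}$ then yields that $j:=\pi\restrict V_\alpha$ is \emph{fully elementary}, since for every formula $\psi$ and all $\vec a\in V_\alpha$,
\[
(V_\alpha,\in)\models\psi(\vec a)\iff\mathrm{Sat}^{M}(\ulcorner\psi\urcorner,\vec a)\iff\mathrm{Sat}^{\overline M}(\ulcorner\psi\urcorner,\vec a)\iff(V_{\bar\alpha},\in)\models\psi(\pi(\vec a)) .
\]
The clause locating $\gamma$ forces $\pi$ to move $\kappa$ (otherwise $\bar\alpha$ would equal the original, bounded ordinal $\alpha$ rather than being arbitrarily large), so $\pi(\kappa)>\kappa$, hence $\crit(j)\le\kappa$, and choosing $\overline M$ large enough arranges $j(\kappa)=\pi(\kappa)>\alpha$.

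The hard part will be this last construction, with two interlocking difficulties. One must find a single second-order sentence $\varphi$ that is satisfied by $\langle V_\alpha,\dots\rangle$ for cofinally many $\alpha$ yet compels \emph{every} sufficiently large superstructure to collapse nontrivially with critical point $\le\kappa$: the naive ``non-reflecting'' devices that force nontriviality tend to pin $\alpha$ to a bounded ordinal, so obtaining cofinally many $\alpha$ while retaining control of the critical point requires care in how the height is located — this is exactly why an internal definition of the $\ULS$-number, rather than a plain large-cardinal clause, is used, and why one must check that a large superstructure of the right form really exists. Second, one must make the transitive collapse of a superstructure interact correctly with the substructure relation, which is the purpose of the satisfaction predicate (for elementarity of $j$) and of the hereditarily-finite constants (so that $\pi$ fixes formula codes); verifying that the surviving ``$j(\kappa)>\alpha$'' condition passes through the pigeonhole/restriction step is a further, more routine, point.
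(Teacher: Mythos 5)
Your first half is fine: an extendibility embedding $j\colon V_\theta\to V_{\theta'}$ with $\crit(j)=\mu$, $j(\mu)>\theta$, together with the correctness of rank initial segments about $\cL^2$-satisfaction, does give $\ULS(\cL^2)\le\mu$ for every extendible $\mu$ (the paper gets this bound instead from Proposition~\ref{prop:compactImpliesSULS} plus Magidor's theorem that the least extendible cardinal is the least strong compactness cardinal for $\cL^2$). The genuine gap is in the second half. Your plan requires, for \emph{cofinally many} $\alpha$, an embedding $j\colon V_\alpha\to V_{\bar\alpha}$ with $\crit(j)\le\kappa$ and $j(\kappa)>\alpha$, so that a pigeonhole on critical points yields one $\mu\le\kappa$ that is fully extendible. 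But the device you propose to force a nontrivial critical point --- the clause saying the height $\gamma$ of the model is \emph{least} such that $V_\gamma$ internally computes $\delta$ to be $\ULS(\cL^2)$ --- is satisfied, for the fixed interpretation $\delta=\kappa$, by at most one $V_\alpha$, namely the least $\alpha$ (if any) with $V_\alpha\models\mbox{``}\kappa=\ULS(\cL^2)\mbox{''}$; so the very clause that makes the collapse move $\kappa$ destroys the cofinality you need, exactly the tension you flag but do not resolve. Dropping the leastness clause does not help: for club many $\bar\alpha$ one expects $V_{\bar\alpha}$ to again compute $\kappa$ as its $\ULS$ number, so a large superstructure can collapse with $\pi(\kappa)=\kappa$ and no critical point is forced. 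With a single $\alpha$ you get only a bounded degree of extendibility for some $\mu\le\kappa$, and the pigeonhole step has nothing to work with. (Two smaller points: ``one first checks that $\kappa$ is inaccessible'' is unsupported and there is no easy direct argument for it; and ``well-founded, extensional, with a total rank function'' does not characterize rank initial segments --- $H_{\omega_1}$ satisfies all three --- though the correct $\cL^2$-characterization does exist, so this is fixable.)

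The paper's proof avoids the need for cofinally many embeddings by a different key idea: argue by contradiction. Assuming no extendible cardinal $\le\delta=\ULS(\cL^2)$, for each $\gamma\le\delta$ the supremum $\alpha_\gamma$ of its degrees of extendibility (and the supremum $\beta_\gamma$ of the corresponding least targets) is defined; taking $\alpha^*,\beta^*$ to be the suprema over $\gamma\le\delta$ and $\rho$ the least cardinal above $|V_{\beta^*+1}|$, one applies the $\ULS$-property \emph{once} to the single structure $(H_{\rho^+},\in,\delta,\alpha^*,\beta^*,\rho,\Tr)$ with a truth predicate and an $\cL^2$-sentence pinning down these constants by their definitions and asserting that $\rho$ is the largest cardinal. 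A much larger superstructure, after collapsing, gives an elementary $j\colon H_{\rho^+}\to N$ in which $\bar\beta^*\gg\beta^*$; since second-order logic guarantees $N$ contains the true $V_{\bar\beta^*+1}$, $j(\delta)=\delta$ would force $N$ to compute $\alpha^*,\beta^*$ exactly as $V$ does, a contradiction, so $\crit(j)=\kappa\le\delta$ exists, and the restriction $j\restrict V_{\alpha^*+1}\colon V_{\alpha^*+1}\to V_{\bar\alpha^*+1}$ witnesses $(\alpha^*+1)$-extendibility of $\kappa$, contradicting the definition of $\alpha^*$. One embedding suffices because the contradiction is against the supremum encoded in the structure; no internal definition of the $\ULS$ number, no cofinal family of heights, and no pigeonhole are needed. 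To repair your argument you would need something of this sort in place of the leastness-of-height device.
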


We can further strengthen the notion of the $\ULS$ number to capture the full power of the upward L\"owenheim-Skolem Theorem.
\begin{definition}
Fix a logic $\sL$. The \emph{strong upward L\"owenheim-Skolem-Tarski number} $\SULS(\sL)$, if it exists, is the least cardinal $\delta$ such that for every language $\tau$ and every $\tau$-structure $M$ of size $\gamma\geq\delta$, for every cardinal $\overline\gamma > \gamma$, there is a $\tau$-structure $\overline M$ of size at least $\overline\gamma$ such that $M\prec_{\sL} \overline M$ is an $\sL$-elementary substructure of $\overline M$.
\end{definition}

\noindent Notice that we could equivalently define the strong upward L\"owenheim-Skolem-Tarski number analogously to the upward L\"owenheim-Skolem-Tarski number but preserving theories instead of single sentences.

We pinpoint large cardinal notions that are equivalent to the existence of the $\ULS$ and strong $\ULS$ numbers for second-order logic, V\"a\"an\"anen's $\Sigma_n$-sort logics $\cL^{s,n}$, the logic $\cL(Q^{WF})$ - first-order logic augmented with the well-foundedness quantifier, the infinitary logics $\cL_{\kappa,\kappa}$, and the logic $\cL(I)$ - first-order logic augmented with the equicardinality quantifier.

\section{Abstract logics}\label{sec:abstractLogics}
The concept of an abstract logic abstracts away the properties of what it means to reasonably assign truth values to statements of a formal system interpreted in structures over a given language. Informally, a logic consists of an assignment of formulas to every language and a satisfaction relation telling us which of the formulas a given structure of the language satisfies. In first-order logic, for instance, the formulas consist of the atomic formulas closed under negation, conjunction, disjunction, and quantification and the satisfaction relation is given by Tarski's definition of truth. The notion of a formula of an abstract logic and the satisfaction relation must be constrained by properties that we expect a reasonable notion of formulas and satisfaction of them to obey.

A \emph{logic} is a pair of classes $(\sL, \models_\sL)$. The class  $\sL$ is a function on the class of all languages, and we call the image $\sL(\tau)$ of a language $\tau$ the set of $\tau$-sentences.  The class $\models_\sL$ is the satisfaction relation consisting of pairs $(M,\varphi)$, where \hbox{$\varphi\in \sL(\tau)$} and $M$ is a $\tau$-structure. The relation $\models_\sL$ is required to respect reducts, isomorphisms, and renamings of a language. If $\sigma\subseteq \tau$ are languages, then every $\sL(\sigma)$-sentence is an $\sL(\tau)$-sentence and if a $\tau$-structure $M$ satisfies an $\sL(\sigma)$-sentence $\varphi$ (according to $\models_{\sL}$), then it also satisfies the sentence $\varphi$ as the reduct $\sigma$-structure. If $\tau$-structures $M$ and $N$ are isomorphic, then they satisfy the same $\sL(\tau)$-sentences. Next, we define that $f$ is a \emph{renaming} between languages $\sigma$ and $\tau$ if it is an arity-preserving bijective map between the functions and the relations of the languages and a bijective map between the constants. Informally, a renaming renames the relations, functions, and constants of the language $\sigma$ by different names resulting in the language $\tau$. A renaming between two languages $\sigma$ and $\tau$ induces an obvious way to turn every $\sigma$-structure into a $\tau$-structure, and vice versa. We demand that a renaming induces a bijection between $\mathcal L(\tau)$ and $\sL(\sigma)$ and if the bijection associates an $\sL(\sigma)$-sentence $\varphi$ with an $\sL(\tau)$-sentence $\overline\varphi$, then a $\sigma$-structure $M$ should satisfy $\varphi$ if and only if it satisfies $\overline\varphi$ as the associated $\tau$-structure. Finally, for technical reasons, we require that our logics have an occurrence number. The occurrence number property captures our intuition that there should be a bound on the number of elements of a language that a single assertion can reference. The \emph{occurrence number} of a logic $\sL$ is the least cardinal $\kappa$ such that for every $\varphi \in \sL(\tau)$, there is $\overline\tau\subseteq\tau$ of size less than $\kappa$ such that $\varphi \in \sL(\overline\tau)$. We only allow set-sized languages and the collection of all sentences for a given language is required to be a set. For a fully formal definition of an abstract logic, see, e.g., \cite{BagariaDimopoulosGitmanMagidor:LargeCardinalLogics}.

The occurrence number requirement plays an important role in proofs involving properties of abstract logics. Without it, we can end up with logics such as $\cL_{\Ord,\Ord}$, which, for instance, can never have a strong compactness cardinal. Although the definition of an abstract logic only mentions sentences, it is easy to extend it to formulas by introducing and interpreting constants. Using this observation, we will assume that our logics can handle formulas with free variables. Additional properties that can be included in the definition of an abstract logic, which we don't assume here, include, for example, closure under Boolean connectives and quantifiers.

In this article, we will work with several classical abstract logics, whose definitions and properties we will now review.  To distinguish an abstract logic from a specific logic, we use $\sL$ to denote abstract logics and $\cL$ (with some decoration) to denote specific logics.

Given regular cardinals $\mu\leq\kappa$, the infinitary logic $\cL_{\kappa,\mu}$ extends first-order logic by closing the rules of formula formation under conjunctions and disjunctions of ${<}\kappa$-many formulas that are jointly in ${<}\mu$-many free variables, and under quantification of ${<}\mu$-many variables. $\cL_{\omega,\omega}$, usually written simply as $\cL$, is first-order logic. While first-order logic relies on the properties of natural numbers, the infinitary logics expand on this by relying on the properties of the ordinals. In the logic $L_{\omega_1,\omega}$, in an arithmetic or set theoretic structure, we can express that the natural numbers are standard. In the logic $\cL_{\omega_1,\omega_1}$, we can express that a binary relation is well-founded. In the logic $\cL_{\kappa,\omega}$, for every ordinal $\xi<\kappa$ and definable binary relation $\psi(y,x)$, there is a corresponding formula $\varphi^\xi_{\psi}(x)$ expressing that the relation given by $\psi$ when restricted to the predecessors of $x$ is isomorphic to the well-order $(\xi,\in)$. Thus, in particular, every ordinal $\xi<\kappa$ is definable in a transitive model of set theory in the logic $\cL_{\kappa,\omega}$. For every $\alpha<\kappa$, there is a sentence $\psi_\alpha$ in the logic $\cL_{\kappa,\kappa}$, which over a transitive model of set theory $N$, expresses closure under $\alpha$-sequences, $N^{\alpha}\subseteq N$.

Second-order logic $\cL^2$ extends first-order logic by allowing quantification over all relations on the underlying set of the model from the background set-theoretic universe. In structures where coding is available, such as arithmetic or set theory, this reduces to quantification over all subsets of the underlying set. In the logic $\cL^2$, we can express that a definable binary relation $\psi(y,x)$ is well-founded and moreover for every $x$, we can express that the relation given by $\psi$ on the predecessors of $x$ is isomorphic to a rank initial segment $(V_\alpha,\in)$ for some ordinal $\alpha$. Thus, in particular, given a set in a transitive model of set theory, we can express that the set is some rank initial segment $V_\alpha$.

The logic $\cL(Q^{WF})$ is first-order logic augmented by the quantifier $Q^{WF}$ that takes in two variables $x$ and $y$ so that $Q^{WF}xy\varphi(x,y)$ is true whenever $\varphi(x,y)$ defines a well-founded relation. Note that $\cL(Q^{WF})\subseteq \cL_{\omega_1,\omega_1}$ and also $\cL(Q^{WF})\subseteq  \cL^2$.

The logic $\cL(I)$ is first-order logic augmented by the \emph{H\"artig} or \emph{equicardinality} quantifier $I$ that takes in two variables $x$ and $y$ and two formulas $\varphi(x)$ and $\psi(y)$ so that $Ixy\varphi(x)\psi(y)$ is true whenever the sets defined by $\varphi(x)$ and $\psi(y)$ have the same cardinality in the background set-theoretic universe. Note that \hbox{$\cL(I)\subseteq \cL^2$}. In the logic $\cL(I)$, in an arithmetic or set-theoretic structure, we can express that the natural numbers are standard (unlike a standard natural number, the predecessors of a nonstandard natural number are bijective with the predecessors of its successor). More generally, we can express that a set-theoretic structure is \emph{cardinal correct} in the sense that for any predecessor of a cardinal, the set of all its predecessors is smaller in cardinality than the set of the predecessors of the cardinal.

The sort logics $\cL^{s,n}$, for $n<\omega$, introduced by V\"a\"an\"anen, are some of the most strong logics available (see \cite{vaananen:sortLogic} for a precise definition and properties). The logic $\cL^{s,n}$ is an extension of second-order logic and deals with many-sorted structures, i.e., structures that are possibly equipped with multiple sorts, each with its own universe of objects and possible relations between them. The key features of $\cL^{s,n}$ are the \emph{sort quantifiers} $\tilde\exists$ and $\tilde\forall$ which  range over all sets in $V$ searching for additional sorts with which we can expand the model to satisfy some desired property for how the old and new sorts should look and interact. We are however restricted to $n$-alternations of the sort quantifiers because if we allowed arbitrary formulas with these quantifiers, the existence of the satisfaction relation would violate Tarski's undefinability of truth. Because $\cL^{s,n}$ extends second-order logic, we can pick out the structures (isomorphic to) $(V_\alpha,\in)$, and now, using the sort quantifiers, it is possible to express that $V_\alpha$ is $\Sigma_n$-elementary in $V$ (see, for example, \cite{BagariaDimopoulosGitmanMagidor:LargeCardinalLogics} on how to do this).
\section{Generalized compactness}
Given a logic $\sL$, an $\sL$-theory (a set of $\sL$-sentences in a fixed language) is said to be \emph{${<}\kappa$-satisfiable} when every ${<}\kappa$-sized subset of it has a model. A cardinal $\kappa$ is a \emph{strong compactness cardinal} of a logic $\sL$ if every ${<}\kappa$-satisfiable $\sL$-theory is satisfiable. Observe that if a cardinal $\kappa$ is a strong compactness cardinal for a logic $\sL$, then indeed every cardinal $\gamma>\kappa$ is also a strong compactness cardinal for $\sL$.

The \emph{compactness Theorem} states that $\omega$ is a strong compactness cardinal of first-order logic. In a pioneering result in this area, Tarski showed that a cardinal $\kappa$ is a strong compactness cardinal of $\cL_{\kappa,\kappa}$ if and only if it is strongly compact \cite{Tarski:Compactness}. Magidor showed that the least extendible cardinal is the least strong compactness cardinal for second-order logic $\cL^2$ \cite{Magidor:Compactness}. Magidor also showed that a cardinal $\kappa$ is a strong compactness cardinal for the logics $\cL_{\omega_1,\omega_1}$ or $\cL(Q^{WF})$ if and only if it is $\omega_1$-strongly compact (see Section~\ref{sec:WF} for details). Surprisingly  little is known about strong compactness cardinals for the logic $\cL(I)$.

Most generally, Makowsky showed that every logic has a strong compactness cardinal if and only if Vop\v{e}nka's Principle holds \cite[Theorem 2]{Makowsky:Vopenka}.

It is not difficult to see, for a logic $\sL$, that the least strong compactness cardinal, if it exists, implies the existence of and bounds to the strong $\ULS$ number.
\begin{proposition}\label{prop:compactImpliesSULS}
If a logic $\sL$ has a strong compactness cardinal $\kappa$, then $\SULS(\sL)$ exists and is at most $\kappa$.
\end{proposition}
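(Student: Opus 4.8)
The plan is to use the strong compactness cardinal $\kappa$ to build, for any given structure $M$ of size $\gamma \geq \kappa$ and any target cardinal $\overline\gamma > \gamma$, an $\sL$-elementary extension $\overline M \succ_{\sL} M$ of size at least $\overline\gamma$. The natural tool is the standard ``elementary diagram plus padding'' argument, executed in the logic $\sL$ and invoking $<\kappa$-satisfiability to apply strong compactness.

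First I would set up the language. Let $\tau$ be the language of $M$, and let $\tau^+$ be $\tau$ together with a fresh constant $c_a$ for each $a \in M$, plus a family of $\overline\gamma$-many additional fresh constants $\{d_\xi : \xi < \overline\gamma\}$. Since the paper notes that abstract logics can be extended to handle constants (hence formulas with free variables interpreted via constants), this is legitimate. Consider the $\sL(\tau^+)$-theory $T$ consisting of: (i) the full $\sL$-elementary diagram of $M$ — that is, every $\sL(\tau)$-sentence $\varphi(a_1,\dots,a_n)$ with parameters from $M$ that holds in $M$, rewritten with the constants $c_{a_i}$; and (ii) the sentences $d_\xi \neq d_\eta$ for all $\xi \neq \eta < \overline\gamma$. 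Any model of $T$, when its interpretations of the $c_a$ are identified with the corresponding $a \in M$, is an $\sL$-elementary extension of $M$ (by the elementary diagram) and has size at least $\overline\gamma$ (by the distinctness axioms); reading off the reduct to $\tau$ gives the desired $\overline M$. Here one uses the reduct property of $\models_\sL$ and the fact that $\sL$-elementarity transfers along the identification, which is where the diagram clause does its work.

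The remaining point is that $T$ is satisfiable, and this is exactly where strong compactness enters. It suffices to show $T$ is $<\kappa$-satisfiable. So fix a subset $T_0 \subseteq T$ with $|T_0| < \kappa$. Only $<\kappa$-many of the distinctness axioms appear in $T_0$, hence only $<\kappa$-many of the constants $d_\xi$ are mentioned; since $|M| = \gamma \geq \kappa$, we may interpret those finitely-or-$<\kappa$-many $d_\xi$ as distinct elements of $M$ itself, interpret each $c_a$ as $a$, and take $M$ as the underlying structure. Then $M$ satisfies all of the diagram sentences in $T_0$ (trivially, since they hold in $M$) and all the distinctness axioms in $T_0$ (by choice of the interpretations). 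So $T_0$ is satisfiable, $T$ is $<\kappa$-satisfiable, and by strong compactness of $\kappa$ for $\sL$, $T$ has a model. This yields $\overline M$, proving $\SULS(\sL)$ exists and is at most $\kappa$.

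The only mild subtlety — the ``main obstacle'', though it is modest — is making sure the $\sL$-elementary diagram argument is valid at the level of abstract logics: we need that a structure satisfying clause (i) really is an $\sL$-elementary extension of $M$. This is essentially built into the convention that $\sL$ can handle formulas with free variables via constants together with the respect-for-isomorphisms and respect-for-reducts requirements on $\models_\sL$; one should just remark that ``$M \prec_{\sL} \overline M$'' unwinds to ``every $\sL(\tau)$-formula with parameters in $M$ has the same truth value in $M$ and in $\overline M$,'' which is precisely what clause (i) forces. One should also note that the occurrence number of $\sL$ plays no role here because we never need a single sentence to reference unboundedly much of the language — each sentence of $T$ lives in $\tau$ plus finitely (or $<\kappa$) many new constants — and that the size bookkeeping ($|T_0| < \kappa \leq \gamma$) is the only place the hypothesis $\gamma \geq \kappa$ is used. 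Everything else is routine.
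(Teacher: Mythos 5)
Your proposal is correct and is essentially the same argument as the paper's: extend the language by $\overline\gamma$-many fresh constants, take the $\sL$-elementary diagram of $M$ together with the distinctness axioms, observe that $M$ itself witnesses ${<}\kappa$-satisfiability (using $|M|=\gamma\geq\kappa$), and apply strong compactness to get the desired $\sL$-elementary extension of size at least $\overline\gamma$. Your extra remarks about handling the diagram via constants and the irrelevance of the occurrence number are fine elaborations but do not change the route.
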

\begin{proof}
Suppose that $\kappa$ is a strong compactness cardinal of $\sL$. Fix a $\tau$-structure $M$ of size $\gamma\geq\kappa$ and a cardinal $\overline\gamma>\gamma$. Let $\tau'$ be the language $\tau$ extended by adding $\overline\gamma$-many constants $\{c_\xi\mid\xi<\overline\gamma\}$. Let $T$ be the $\sL(\tau')$-theory consisting of the $\sL$-elementary diagram of $M$ and assertions $c_\xi\neq c_\eta$ for $\xi<\eta<\overline\gamma$. Clearly, the theory $T$ is ${<}\kappa$-satisfiable because it holds true in $M$ (with the distinct constants $c_\xi$ mapped to distinct elements of $M$). Thus, $T$ has a model $N$. By construction, $N$ has size at least $\overline\gamma$ and is an $\sL$-elementary superstructure of $M$.
\end{proof}
It follows, by Makowsky's theorem, that if Vop\v enka's Principle holds, then every logic $\sL$ has a strong $\ULS$-number.

\section{Truth predicates}
Suppose that $(M,\in)$ is a transitive model closed under the pairing function. We shall say that $\Tr^M\subseteq M$ is a \emph{truth predicate} for $M$ if for all first-order formulas $\varphi(x_1, \dots, x_n)$ and all tuples $(a_1, \dots, a_n)$: $$(M, \in) \models \varphi(a_1, \dots, a_n) \text{ if and only if } (\varphi, a_1, \dots, a_n) \in \Tr^M.$$ There is a sentence $\varphi_{\text{truth}}$ of first-order logic such that for any $T\subseteq M$, $$(M,\in, T)\models\varphi_{\text{truth}}\text{ if and only if }T=\Tr^M.$$ The sentence $\varphi_\text{truth}$ can be obtained by taking the conjunction of the following sentences, which go through Tarski's truth definition in the usual way:
	\begin{enumerate}
		\item $\forall x ( x= (\ulcorner x_i\in x_j \urcorner, a_1, \dots, a_n) \rightarrow (T(x) \leftrightarrow a_i\in a_j))$.
		\item $\forall x ( x= (\ulcorner x_i = x_j\urcorner, a_1, \dots, a_n) \rightarrow (T(x) \leftrightarrow a_i = a_j))$.
		\item $\forall x (x =(\ulcorner \psi \wedge \chi \urcorner, a_1, \dots, a_n) \rightarrow (T(x) \leftrightarrow T((\ulcorner \psi \urcorner, a_1, \dots, a_n)) \wedge T((\ulcorner \chi \urcorner, a_1, \dots, a_n))))$.
		\item $\forall x (x =(\ulcorner \neg \psi \urcorner, a_1, \dots, a_n) \rightarrow (T(x) \leftrightarrow \neg T((\ulcorner \psi \urcorner, a_1, \dots, a_n))))$.
		\item $\forall x (x =(\ulcorner \exists x \psi \urcorner, a_1, \dots, a_n) \rightarrow (T(x) \leftrightarrow \exists y T((\ulcorner \psi \urcorner, a_1, \dots, a_n, y))))$.
	\end{enumerate}

Suppose that $(M,\in)$ and $(N,\in)$ are transitive models, $(M,\in, \Tr^M)\models\varphi_{\text{truth}}$ and $(N,\in,\Tr^N)\models\varphi_{\text{truth}}$. If there is an embedding

 $$j: (M, \in, \Tr^M) \rightarrow (N, \in, \Tr^N),$$  then $j$ is an elementary embedding between the structures $(M,\in)$ and $(N,\in)$. Thus, by including a truth predicate in our set-theoretic structures, we essentially get elementarity for free from just being a substructure. We will repeatedly use this simple observation to extract large cardinal strength from the existence of $\ULS$ numbers.

\section{The logic $\cL(Q^{WF})$}\label{sec:WF}
In this section we show that $\ULS(\cL(Q^{WF}))=\SULS(\cL(Q^{WF}))$ is the least measurable cardinal.

\begin{theorem}\label{th:measImpliesSULSWF}
If there is a measurable cardinal $\kappa$, then $\SULS(\cL(Q^{WF}))$ exists and is at most $\kappa$.
\end{theorem}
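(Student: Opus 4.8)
The plan is to manufacture the required arbitrarily large $\cL(Q^{WF})$-elementary extensions from iterated ultrapowers of $V$ by a normal measure on $\kappa$. Fix a normal measure $U$ on $\kappa$ and, for each ordinal $\alpha$, let $j_{0\alpha}\colon V\to\mathcal M_\alpha$ be the $\alpha$-th iterated ultrapower of $V$ by $U$, with critical sequence $\langle\kappa_\xi\mid\xi\leq\alpha\rangle$ (so $\kappa_0=\kappa$ and $\kappa_\xi=j_{0\xi}(\kappa)$). Since $U$ is countably complete, Gaifman's theorem guarantees that each $\mathcal M_\alpha$ is well-founded; identifying it with its transitive collapse, $\mathcal M_\alpha$ is a transitive class model of $\ZFC$ with $\mathcal M_\alpha\subseteq V$, and $j_{0\alpha}$ is a fully first-order elementary embedding. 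The reason for iterating transfinitely is size: one ultrapower moves cardinals only boundedly, whereas a routine induction gives $\kappa_\xi\geq\xi$, so the critical points are cofinal in the ordinals. Now, given a language $\tau$, a $\tau$-structure $M$ of size $\gamma\geq\kappa$, and a target cardinal $\overline\gamma>\gamma$, I would set $j=j_{0\overline\gamma}$ and take $\overline M=j(M)$, construed as a $\tau$-structure. Since $\gamma\geq\kappa$ we have $j(\gamma)\geq j(\kappa)=\kappa_{\overline\gamma}\geq\overline\gamma$, and $\mathcal M_{\overline\gamma}$ (hence $V$) has a bijection between $j(M)$ and $j(\gamma)$, so $|\overline M|\geq\overline\gamma$. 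If $|\tau|\geq\kappa$, so that $j$ need not fix $\tau$ pointwise, one instead takes the reduct of the $j(\tau)$-structure $j(M)$ to $j''\tau$ and transports it back along the renaming $j\restriction\tau$; since logics respect renamings and isomorphisms this causes no trouble, and after relabelling each $j(a)$, $a\in M$, by $a$ we may assume outright that $M\subseteq\overline M$ is a $\tau$-substructure.

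It remains to check that $j\restriction M\colon M\to\overline M$ is $\cL(Q^{WF})$-elementary, i.e.\ that $M\prec_{\cL(Q^{WF})}\overline M$. Because $j$ is fully first-order elementary and every $\cL(Q^{WF})$-formula is coded by a hereditarily finite object, it suffices to show that the satisfaction relation of $\cL(Q^{WF})$ restricted to set structures is absolute between $V$ and the transitive model $\mathcal M_{\overline\gamma}$: then for any $\cL(Q^{WF})$-formula $\varphi(\bar x)$ and tuple $\bar a\in M$ one gets $M\models_{\cL(Q^{WF})}\varphi(\bar a)$ iff $\mathcal M_{\overline\gamma}\models$ ``$j(M)\models_{\cL(Q^{WF})}\varphi(j(\bar a))$'' (by elementarity of $j$) iff $V\models$ the same statement (by absoluteness) iff $\overline M\models_{\cL(Q^{WF})}\varphi(\bar a)$. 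The absoluteness I would prove by induction on $\cL(Q^{WF})$-formulas: the first-order clauses are absolute between transitive models of $\ZFC$ in the usual way, and the only new clause, $Q^{WF}xy\,\psi$, asks whether a certain binary relation on the (set-sized) domain of the structure --- definable from $\psi$ and parameters, hence the same set in both models by the induction hypothesis --- is well-founded, and well-foundedness of a set relation $R$ is absolute between transitive models $\mathcal N\subseteq\mathcal N'$ of $\ZFC$ (a rank function for $R$ in $\mathcal N$ is still one in $\mathcal N'$, while if $R$ is ill-founded in $\mathcal N$ then $\mathcal N$ contains, by $\AC$, an infinite $R$-descending sequence, which is also one in $\mathcal N'$). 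This closes the induction, so $\overline M$ is a $\tau$-structure of size at least $\overline\gamma$ with $M\prec_{\cL(Q^{WF})}\overline M$; since $\overline\gamma>\gamma$ was arbitrary, $\kappa$ is a legitimate value of $\delta$ in the definition, and therefore $\SULS(\cL(Q^{WF}))$ exists and is at most $\kappa$.

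The step I expect to need the most care --- really the only non-routine point --- is that one may iterate ultrapowers through all the ordinals, i.e.\ that $\mathcal M_\alpha$ stays well-founded for every $\alpha$; without this neither the size computation nor the absoluteness argument makes sense. This is exactly where the countable completeness of the measure, via Gaifman's theorem, is used, and it is the only place in the argument that genuinely exploits measurability of $\kappa$ rather than merely the existence of an elementary embedding. Everything else --- preservation of $\cL(Q^{WF})$-truth along $j$, the reduction of its absoluteness to absoluteness of well-foundedness, and the bookkeeping with the language $\tau$ --- is routine.
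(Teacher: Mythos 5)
Your proposal is correct and follows essentially the same route as the paper: both obtain an elementary embedding $j$ with critical point $\kappa$ by iterating the ultrapower until the image of $\kappa$ (equivalently of $\gamma$) exceeds the target cardinal, take $j(N)$ as the large structure with $j\image N\cong N$ as substructure after the renaming of the language, and use transitivity of the iterate to get correctness about the well-foundedness quantifier together with absoluteness of first-order satisfaction. Your only deviations — using a normal measure, iterating exactly $\overline\gamma$ times and invoking $\kappa_{\overline\gamma}\geq\overline\gamma$ rather than just "enough times", and spelling out the absoluteness induction — are cosmetic.
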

\begin{proof}
Suppose that $\kappa$ is a measurable cardinal. Suppose that $N$ is a $\tau$-structure of size $\gamma\geq\kappa$. Let $\overline\gamma>\gamma$ be any cardinal. Let $j:V\to M$ be an elementary embedding with $\crit(j)=\kappa$ and $j(\kappa)>\overline\gamma^+$. We can obtain such an embedding by iterating the ultrapower construction with a $\kappa$-complete ultrafilter enough times. Consider the $j(\tau)$-structure $j(N)\in M$. Since $j\image\tau\subseteq\tau$, (a reduct of) $j(N)$ is a $\tau$-structure modulo the renaming which takes $\tau$ to $j\image\tau$. The renaming gives rise to the associated bijection between formulas over $\tau$ and $j\image\tau$, so specifically, let it send $\varphi$ to $\overline\varphi$. It is easy to see that $\overline N=j\image N\subseteq j(N)$ is a $\tau$-substructure of $j(N)$. Clearly, the bijection $j$ witnesses that $N\cong \overline N$ as $\tau$-structures because, for instance, if $R(x)$ is a relation in $\tau$ and \hbox{$N\models R(a)$}, then, by elementarity, $j(R)(j(a))$ holds in $j(N)$, so $j(N)\models R(j(a))$ via the renaming, and so $\overline N\models R(j(a))$. Next, let's show that $\overline N\prec_{\cL(Q^{WF})} j(N)$. Suppose that $$\overline N\models_{\cL(Q^{WF})}\varphi(j(a)).$$ Via the isomorphism, it follows that $$N\models_{\cL(Q^{WF})}\varphi(a).$$ But then by elementarity of $j$, $M$ satisfies that $j(N)\models_{\cL(Q^{WF})}\overline\varphi(j(a))$ (as a $j(\tau)$-structure). Since $M$ is well-founded, and thus correct about the well-foundedness quantifier, and first-order satisfaction is absolute, it is actually the case that $$j(N)\models_{\cL(Q^{WF})}\overline\varphi(j(a))$$ as a $j\image\tau$-structure, and hence $$j(N)\models_{\cL(Q^{WF})}\varphi(j(a))$$ modulo the renaming as a $\tau$-structure. Finally, since $|N|=\gamma\geq\kappa$, $M$ satisfies that $|j(N)|\geq j(\kappa)>\overline\gamma^+$, but then in $V$, $|j(N)|\geq \overline\gamma^+>\overline\gamma$ as desired.
\end{proof}

\begin{theorem}\label{th:ULSWF}
If $\ULS(\cL(Q^{WF}))=\delta$, then there is a measurable cardinal ${\leq}\delta$.
\end{theorem}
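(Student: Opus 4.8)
The plan is to show that if $\ULS(\cL(Q^{WF}))=\delta$ exists, then $\delta$ (or some cardinal below it) carries a $\delta$-complete nonprincipal ultrafilter, hence is measurable. The strategy mirrors the ``truth predicate'' technique advertised in the excerpt: apply the $\ULS$ property to a carefully chosen set-theoretic structure of size $\delta$ that codes enough of $V$, and then harvest a measure from the resulting elementary-substructure embedding.

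\medskip

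\emph{Step 1: Set up the model.} Fix a large regular cardinal $\theta$ and consider the structure
\[
M=\langle H_\theta,\in,\Tr^{H_\theta},\delta,\{\alpha\}_{\alpha<\delta}\rangle,
\]
that is, $H_\theta$ with its first-order truth predicate (as in the Truth Predicates section), a constant for $\delta$, and constants $c_\alpha$ for every $\alpha<\delta$. This has size $|H_\theta|\ge\delta$. Let $\tau$ be the corresponding language and let $\varphi$ be an $\cL(Q^{WF})(\tau)$-sentence asserting: (i) $\varphi_{\text{truth}}$ holds, so the predicate really is the truth predicate; (ii) $\in$ is well-founded (this is where the $Q^{WF}$ quantifier is essential — it pins down well-foundedness of the membership relation); (iii) the constants $c_\alpha$ are ordinals below the constant $\delta$, listed in increasing order, with nothing between consecutive ones, and $\delta$ is a cardinal. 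Then $M\models_{\cL(Q^{WF})}\varphi$.

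\medskip

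\emph{Step 2: Apply $\ULS$ and extract an embedding.} Pick $\overline\gamma>|M|$. By the $\ULS$ property there is a $\tau$-structure $\overline M\supseteq M$ with $\overline M\models_{\cL(Q^{WF})}\varphi$ and $|\overline M|\ge\overline\gamma$. Because $\overline M$ satisfies the well-foundedness clause and $Q^{WF}$ is correctly interpreted in $V$, the $\in$-relation of $\overline M$ is genuinely well-founded, so we may take $\overline M$ to be transitive, say $\overline M=\langle N,\in,\Tr^N,\bar\delta,\ldots\rangle$. Since $M\subseteq\overline M$ is a substructure and both satisfy $\varphi_{\text{truth}}$, the inclusion map $H_\theta\hookrightarrow N$ is, by the observation in the Truth Predicates section, \emph{elementary} as a map $\langle H_\theta,\in\rangle\to\langle N,\in\rangle$. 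Call this map $j$; it is the identity on its domain but $N$ is strictly larger. The ordinals of $H_\theta$ sit as an initial segment inside the ordinals of $N$ — indeed the constants force $\delta\le\bar\delta=j(\delta)$, and one checks $\delta<\bar\delta$: the structure $M$ is too small to contain all of $N$'s many new elements, and a pigeonhole / cardinality argument (using $|N|\ge\overline\gamma$ together with the cardinal-correctness encoded via well-foundedness and the constants enumerating $\delta$) shows the ordinal height strictly increases at $\delta$. So $j$ has a critical point $\kappa\le\delta$.

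\medskip

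\emph{Step 3: Build the ultrafilter.} Since $\kappa=\crit(j)$ and $j$ is elementary between transitive models of a large fragment of $\ZFC$ (namely $H_\theta$ and $N$), the standard construction yields a measure: set
\[
U=\{X\subseteq\kappa : X\in H_\theta\ \text{and}\ \kappa\in j(X)\}.
\]
Elementarity of $j$ gives that $U$ is a $\kappa$-complete nonprincipal ultrafilter on $\kappa$ — the completeness argument needs that $j$ commutes with $<\kappa$-length sequences, which holds because $j$ moves no ordinal below $\kappa$ and $H_\theta$, $N$ are sufficiently closed. Hence $\kappa\le\delta$ is measurable.

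\medskip

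\emph{Main obstacle.} The delicate point is Step 2: verifying that $\crit(j)$ exists, i.e.\ that $j$ is not the identity on ordinals, and locating it $\le\delta$. Merely having $|\overline M|\ge\overline\gamma$ does not by itself force new ordinals below $\delta$ or even a critical point at all; a substructure can grow ``sideways.'' The fix is to engineer $\varphi$ so that $N$ must be a transitive model whose ordinal height genuinely exceeds that of $H_\theta$ — for instance by additionally asserting in $\varphi$ that ``$V=H_{\theta}$'' relativized appropriately, or by a reflection/Löwenheim–Skolem argument showing that if $j$ had no critical point $\le\delta$ then $\overline M$ would embed back into $M$, contradicting the size jump. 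Getting this bookkeeping exactly right — so that the well-foundedness quantifier does the work of pinning transitivity and the constants below $\delta$ do the work of pinning a critical point — is where the real care is needed; everything else is routine measurable-cardinal combinatorics.
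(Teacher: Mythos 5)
Your overall strategy (truth predicate to get elementarity for free, well-foundedness quantifier to allow collapsing, then derive a measure from the critical point) is the same as the paper's, but the one step you flag as the ``main obstacle'' is precisely the step your write-up never actually carries out, and it is the heart of the proof. With $M=(H_\theta,\in,\Tr,\delta,\dots)$ for an arbitrary large $\theta$ and a sentence asserting only well-foundedness, the truth predicate, and ``$\delta$ is a cardinal,'' nothing forces the collapsed embedding $j:H_\theta\to N$ to move any ordinal $\leq\delta$: the extra $\overline\gamma$-many elements of $\overline M$ can all sit above (the image of) $\delta$, so $j$ may well have no critical point $\leq\delta$, and then no measure can be extracted. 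Your proposed fixes do not close this: ``$V=H_\theta$ relativized appropriately'' is not a workable sentence as stated, and the family of constants $c_\alpha$ for all $\alpha<\delta$ cannot be used at all, since $\ULS$ is defined for a \emph{single} sentence of $\cL(Q^{WF})$, which (the logic being finitary, with occurrence number $\omega$) can only mention finitely many constants; moreover those constants would, if anything, pin the ordinals below $\delta$ \emph{fixed} rather than produce a critical point. There is also an internal inconsistency in Step 2: after collapsing $\overline M$, the map $j$ is the collapse restricted to $H_\theta$, which is generally \emph{not} the identity (new $\mathsf E$-predecessors of old elements are exactly what moves ordinals up); if, as you assert, $j$ were the identity with the ordinals of $H_\theta$ an initial segment of those of $N$, there would be no critical point and the argument would die.

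The paper closes the gap with a specific choice you are missing: take the structure to be $(H_{\delta^+},\in,\delta,\Tr)$ and include in $\varphi$ the statement ``$\delta$ is the \emph{largest} cardinal.'' Then any well-founded model $\overline M$ of $\varphi$, once collapsed to a transitive $N$ with $j(\delta)=\overline\delta$, satisfies that every set has cardinality at most $\overline\delta$; since $N$ is transitive, this bounds $|N|$ by $2^{|\overline\delta|}$, so choosing $\overline M$ of size greater than $2^\delta$ forces $\overline\delta>\delta$, hence $j(\delta)>\delta$ and $\crit(j)=\kappa\leq\delta$. Since $\mathcal P(\kappa)\subseteq H_{\delta^+}$, the derived filter $U=\{X\subseteq\kappa\mid \kappa\in j(X)\}$ is a $\kappa$-complete nonprincipal ultrafilter, which is essentially your Step 3 and is fine. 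So the measure-extraction part of your proposal is correct, but without the ``largest cardinal'' device (or some equally concrete replacement) the existence of a critical point $\leq\delta$ is an unproved assumption, and the proof as written has a genuine gap.
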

\begin{proof}
Suppose $\ULS(\cL(Q^{WF}))=\delta$. Consider the model $$\mathcal M=( H_{\delta^+},\in,\delta,\Tr),$$ where $\Tr$ is a truth predicate for $(H_{\delta^+},\in)$. Then $\mathcal M$ satisfies the sentence $\varphi$ in the logic $\cL(Q^{WF})$, which is the conjunction of the sentences:
\begin{enumerate}
\item I am well-founded.
\item $\delta$ is the largest cardinal.
\item $\Tr$ is a truth predicate for $(H_{\delta^+},\in)$.
\end{enumerate}
Since $\ULS(\cL(Q^{WF}))=\delta$, there is a structure $$\mathcal N=(N,{\mathsf E},\overline\delta,\overline \Tr)$$ of size much larger than $\delta$ having $\mathcal M$ as a substructure and which is a model of the above sentences. Since $\mathcal N$ is well-founded, we can assume, by collapsing, that ${\mathsf E}=\in$, $N$ is transitive, and there is an embedding $$j:H_{\delta^+}\to N$$ such that $j(\delta)=\overline\delta$. Observe that since $\bar\delta$ is the largest cardinal of $\mathcal N$ and $|N|$ is much larger than $\delta$, it follows that $\overline\delta>\delta$. Since we have included a truth predicate, the embedding $j$ is elementary, and since $j(\delta)=\overline\delta>\delta$, $j$ has a critical point $\crit(j)=\kappa\leq\delta$. Since the powerset of $\kappa$ is contained in $H_{\delta^+}$, we can use $j$ to derive a $\kappa$-complete ultrafilter on $\kappa$ witnessing that $\kappa$ is measurable.
\end{proof}
\begin{corollary}\label{cor:ULSWF} The following are equivalent for a cardinal $\kappa$.
\begin{enumerate}
\item $\kappa$ is the least measurable cardinal.
\item $\kappa=\ULS(\cL(Q^{WF}))$.
\item $\kappa=\SULS(\cL(Q^{WF}))$.
\end{enumerate}
\end{corollary}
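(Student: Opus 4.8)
The plan is to deduce Corollary~\ref{cor:ULSWF} formally from Theorems~\ref{th:measImpliesSULSWF} and~\ref{th:ULSWF}, together with one elementary general observation: for every logic $\sL$, if $\SULS(\sL)$ exists then so does $\ULS(\sL)$, and $\ULS(\sL)\le\SULS(\sL)$. To see this I would argue that $\delta=\SULS(\sL)$ itself witnesses the $\ULS$ property: given any $\sL(\tau)$-sentence $\varphi$ and a $\tau$-structure $M\models_\sL\varphi$ of size $\gamma\ge\delta$, and any $\overline\gamma>\gamma$, the defining property of $\SULS(\sL)$ yields an $\sL$-elementary extension $\overline M\succ_\sL M$ of size at least $\overline\gamma$; since $M\prec_\sL\overline M$ preserves the sentence $\varphi$ and an $\sL$-elementary substructure is in particular a substructure, $\overline M$ is exactly the kind of model the $\ULS$ requirement asks for. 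Hence $\ULS(\sL)$ exists and is at most $\SULS(\sL)$.

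Next I would isolate the key claim: \emph{if there is a measurable cardinal and $\mu$ denotes the least one, then $\ULS(\cL(Q^{WF}))=\SULS(\cL(Q^{WF}))=\mu$.} By Theorem~\ref{th:measImpliesSULSWF}, $\SULS(\cL(Q^{WF}))$ exists and is at most $\mu$, so by the observation of the previous paragraph $\ULS(\cL(Q^{WF}))\le\SULS(\cL(Q^{WF}))\le\mu$. Conversely, applying Theorem~\ref{th:ULSWF} with $\delta=\ULS(\cL(Q^{WF}))$ produces a measurable cardinal $\le\ULS(\cL(Q^{WF}))$, and minimality of $\mu$ forces $\mu\le\ULS(\cL(Q^{WF}))$. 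Chaining these inequalities gives $\mu\le\ULS(\cL(Q^{WF}))\le\SULS(\cL(Q^{WF}))\le\mu$, so all three quantities coincide, which proves the claim.

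Finally I would close the three-way equivalence. The implications $(1)\Rightarrow(2)$ and $(1)\Rightarrow(3)$ are immediate instances of the key claim. For $(2)\Rightarrow(1)$: if $\kappa=\ULS(\cL(Q^{WF}))$ then this number exists, so Theorem~\ref{th:ULSWF} guarantees a measurable cardinal; letting $\mu$ be the least one, the key claim gives $\ULS(\cL(Q^{WF}))=\mu$, hence $\kappa=\mu$ is the least measurable. For $(3)\Rightarrow(1)$: if $\kappa=\SULS(\cL(Q^{WF}))$ then by the first paragraph $\ULS(\cL(Q^{WF}))$ also exists, and the same reasoning gives $\kappa=\SULS(\cL(Q^{WF}))=\mu$. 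There is no substantive obstacle in this corollary — it is pure bookkeeping over the two theorems — and the only point demanding a moment of care is getting the direction of $\ULS\le\SULS$ right and verifying that the $\sL$-elementary extension supplied by the strong principle actually transfers as a witness for the (weaker) single-sentence principle.
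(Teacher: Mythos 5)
Your proposal is correct and follows essentially the same route the paper intends: the corollary is exactly the bookkeeping consequence of Theorems~\ref{th:measImpliesSULSWF} and~\ref{th:ULSWF}, together with the standard observation that an $\sL$-elementary extension is in particular a substructure satisfying the same sentences, so $\ULS(\sL)\leq\SULS(\sL)$ whenever the latter exists. Nothing further is needed.
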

Next, we would like to understand the relationship between the least strong compactness cardinal for $\cL(Q^{WF})$ and $\SULS(\cL(Q^{WF}))$. Recall that a cardinal $\delta$ is \emph{$\gamma$-strongly compact} for $\gamma \leq \delta$ if every $\delta$-complete filter (on any set) can be extended to a $\gamma$-complete ultrafilter. Note that this is not a typical large cardinal notion because if $\delta$ is $\gamma$-strongly compact, then it is easy to see that every cardinal $\delta'\geq\delta$ is also $\gamma$-strongly compact. This, property is, however, necessary for the theorem below because the same holds for strong compactness cardinals.

\begin{theorem}[Magidor]\label{th:omega1StronglyCompactCompactnessCardinal}
A cardinal $\delta$ is a strong compactness cardinal for $\cL(Q^{WF})$ if and only if $\delta$ is $\omega_1$-strongly compact.
\end{theorem}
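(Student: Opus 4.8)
The plan is to prove both directions by leveraging the correspondence between strong compactness cardinals for a logic and the ability to realize $<\kappa$-satisfiable theories, on one side, and the filter-extension characterization of $\omega_1$-strong compactness on the other. Since Proposition~\ref{prop:compactImpliesSULS} and the preceding results already tell us that $\cL(Q^{WF})$ behaves like $\cL_{\omega_1,\omega_1}$ for these purposes (note $\cL(Q^{WF})\subseteq\cL_{\omega_1,\omega_1}$ and both can express well-foundedness of a definable relation), the heart of the matter is to transfer the classical Keisler–Tarski-style argument that strong compactness cardinals for $\cL_{\omega_1,\omega_1}$ coincide with $\omega_1$-strongly compact cardinals down to $\cL(Q^{WF})$, using only the one extra quantifier $Q^{WF}$.

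For the forward direction, suppose $\delta$ is a strong compactness cardinal for $\cL(Q^{WF})$, and let $F$ be a $\delta$-complete filter on a set $S$. The plan is to build an $\cL(Q^{WF})$-theory in a language with a constant for each element of $\Power(S)$, a constant $c$, and a predicate $U$, whose axioms assert that $U$ is an ultrafilter on $S$ extending $F$ (one axiom $U(\check X)$ for each $X\in F$), that $U$ is $\omega_1$-complete (using $Q^{WF}$ to say: there is no descending $\omega$-sequence of elements of $U$ with empty intersection — more carefully, encode countable sequences via a definable relation and use the well-foundedness quantifier to pin down the order type $\omega$), and that $c\in\bigcap$ of any element of $U$ picked out, forcing genuine $\omega_1$-completeness rather than finite intersection. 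Any $<\delta$-sized subtheory mentions $<\delta$-many sets $X_\alpha\in F$; by $\delta$-completeness of $F$ their intersection is in $F$, hence nonempty, and interpreting $U$ as the principal ultrafilter at a point of that intersection (together with the correct reading of $Q^{WF}$ in the standard model) satisfies the subtheory. So the full theory has a model $N$; since $N$ is well-founded and thus correct about $Q^{WF}$, the predicate $U^N$ transfers to a genuine $\omega_1$-complete ultrafilter on $S$ extending $F$, using the external identification of the standard copy of $\Power(S)$ inside $N$.

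For the converse, suppose $\delta$ is $\omega_1$-strongly compact and let $T$ be a $<\delta$-satisfiable $\cL(Q^{WF})$-theory in a language $\tau$. The plan is the standard ultraproduct argument: let $S$ be the set of $<\delta$-sized subsets of $T$, for $t\in S$ pick a model $M_t\models t$, and let $F$ be the $\delta$-complete filter on $S$ generated by the sets $\hat t=\{s\in S: t\subseteq s\}$; extend $F$ to an $\omega_1$-complete ultrafilter $D$. Then form $\prod_{t\in S} M_t/D$; by a {\L}o\'s-type theorem for $\cL(Q^{WF})$ this ultraproduct satisfies every sentence of $T$, provided we check that the $Q^{WF}$ quantifier is preserved — and this is exactly where $\omega_1$-completeness of $D$ is used: $\omega_1$-complete ultraproducts are $\omega_1$-complete as index structures, so a definable relation is well-founded in the ultraproduct iff it is well-founded in $D$-many factors, since any putative infinite descending sequence in the ultraproduct would, by $\omega_1$-completeness, pull back to an infinite descending sequence in almost every factor. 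I would isolate this {\L}o\'s theorem for $\cL(Q^{WF})$ as a lemma (it is folklore, and also follows from the corresponding fact for $\cL_{\omega_1,\omega_1}$ since $\cL(Q^{WF})\subseteq\cL_{\omega_1,\omega_1}$), and then the ultraproduct is the desired model of $T$, so $\delta$ is a strong compactness cardinal.

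The main obstacle is the converse direction's preservation of the well-foundedness quantifier under $\omega_1$-complete ultraproducts: one must argue carefully that $\omega_1$-completeness of $D$ is both necessary and sufficient for $Q^{WF}$ to be reflected by {\L}o\'s, handling the subtlety that well-foundedness is a $\Pi_1^1$-type assertion and its failure produces an infinite descending sequence whose components must be pushed down into the factors using countable choice inside the completeness of $D$. Everything else — the filter-extension coding in the forward direction, and the bookkeeping that $\cL(Q^{WF})$ sentences only reference set-sized languages by the occurrence number property — is routine. Since Magidor's original argument is for $\cL_{\omega_1,\omega_1}$ and $\cL(Q^{WF})$ sits between $\cL$ and $\cL_{\omega_1,\omega_1}$, the cleanest write-up likely just cites the $\cL_{\omega_1,\omega_1}$ result and observes that the forward direction's coding can be carried out already in $\cL(Q^{WF})$, which squeezes both logics' strong compactness spectra together.
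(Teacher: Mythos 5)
Your overall architecture is genuinely different from the paper's (which runs both directions through elementary embeddings: Bagaria--Magidor's embedding characterization of $\omega_1$-strong compactness for one direction, and, for the other, a well-founded model of the $\cL(Q^{WF})$-elementary diagram of some $V_\gamma$, collapsed to get $j:V_\gamma\to M$ with $\crit(j)\geq\omega_1$, from which fine $\omega_1$-complete ultrafilters on $\mathcal P_\delta(\alpha)$ are derived). Your plan could in principle be carried out, but as written it has two concrete gaps. First, in the direction ``strong compactness cardinal $\Rightarrow$ $\omega_1$-strongly compact,'' the step where $Q^{WF}$ is supposed to express that $U$ is $\omega_1$-complete, and where the internal completeness of $U^N$ is supposed to transfer to the externally derived ultrafilter, does not work as sketched. $Q^{WF}$ only asserts genuine well-foundedness of a binary relation defined on the model's universe; ``there is no descending $\omega$-sequence of elements of $U$ with empty intersection'' is not such an assertion ($\subsetneq$ restricted to $U$ is ill-founded even for countably complete nonprincipal ultrafilters, and ``empty intersection'' is not a local property of a descending chain), and nothing in your theory forces $N$ to be well-founded, so the claim ``$N$ is well-founded and thus correct about $Q^{WF}$'' is unjustified. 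The real issue is internal versus external completeness: the countable sequences of standard sets witnessing a failure of $\omega_1$-completeness of the derived ultrafilter need not be coded in $N$ at all. To fix this you must either include enough of $V$ in the structure so that a well-founded model yields an elementary $j$ with $\crit(j)\geq\omega_1$ (the paper's route), or put all functions $S\to\omega$ into the structure and use $Q^{WF}$ to assert well-foundedness of the relation $\{(g,f):\{s:g(s)<f(s)\}\in U\}$, so that an external failure of completeness produces a real descending chain of standard functions, contradicting the absolute semantics of $Q^{WF}$. Your sketch identifies neither mechanism.

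Second, in the converse direction your index filter need not exist: the filter on $\mathcal P_\delta(T)$ generated by the cones $\hat t$ is $\delta$-complete only when $\delta$ is regular; if $\delta$ is singular of cofinality $\mu<\delta$ and $|T|\geq\delta$, then $\mu$-many cones already have empty intersection, so the filter-extension form of $\omega_1$-strong compactness cannot be applied. This is not a corner case one may ignore: the paper stresses (via Bagaria--Magidor) that the least $\omega_1$-strongly compact cardinal can be singular, and the theorem is stated for arbitrary cardinals $\delta$. The paper's use of the embedding characterization (\cite[Theorem 1.2]{BagariaMagidor:Omega1StronglyCompact}), together with correctness of transitive models about $Q^{WF}$, handles singular $\delta$ uniformly. (Your \L os\ lemma for $\cL(Q^{WF})$ with respect to $\omega_1$-complete ultrafilters is fine and does follow from the $\cL_{\omega_1,\omega_1}$ case; but note that simply ``citing the $\cL_{\omega_1,\omega_1}$ result'' is not an option here, since the paper points out that no proof of Magidor's theorem has been published, which is exactly why it supplies one.)
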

We are not aware of any published proof of this result, so we give one here.
\begin{proof}
Suppose that $\delta$ is $\omega_1$-strongly compact. Fix a language $\tau$. Let $T$ be a ${<}\delta$-satisfiable theory in $\cL(Q^{WF})(\tau)$. We have to find a model of $T$. By\break \cite[Theorem 1.2]{BagariaMagidor:Omega1StronglyCompact}, $\omega_1$-strong compactness of $\delta$ gives us an elementary embedding $j: V \to M$ with \hbox{$\crit(j)=\kappa \geq \omega_1$} and $d \in M$ such that $j \image T \subseteq d$ and $M \models |d|<j(\delta)$. In $M$, let $S=j(T)\cap d$. Observe that $M \models |S| < j(\delta)$ and $j \image T \subseteq S \subseteq j(T)$. By elementarity, $M$ believes that every subset of $j(T)$, a theory in $\cL(Q^{WF})(j(\tau))$ of size ${<} j(\delta)$, is satisfiable. It follows that $M$ has a $j(\tau)$-structure $\mathcal A \models S$. As $M$ is transitive, it is correct about the well-foundedness quantifier, so $\mathcal A$ is really a model of $S$ and, in particular, of $j \image T \subseteq S$. Using the renaming  which takes $\tau$ to $j\image\tau$, we get that $\mathcal A$ is a model of $T$ as a $\tau$-structure.

Next, suppose that $\delta$ is a strong compactness cardinal for $\cL(Q^{WF})$. To show that $\delta$ is $\omega_1$-strongly compact, by the proof of \cite[Theorem 4.7]{BagariaMagidor:groupRadicals} it is sufficient to produce for every $\alpha \geq \delta$ a fine $\omega_1$-complete ultrafilter on $\mathcal P_\delta(\alpha)$. If $\gamma$ is an ordinal with $\mathcal P(\alpha) \in V_\gamma$ and we have an elementary embedding $j: V_\gamma \to M$ with $M$ transitive, $\crit(j) \geq \omega_1$, $d\in M$ with $j \image \alpha \subseteq d \subseteq j(\alpha)$ and $M \models |d| < j(\delta)$, then it is routine to check that $U$ defined by
	\[
	X \in U \text{ iff } X \subseteq \mathcal P_\delta(\alpha) \text{ and } d \in j(X)
	\]
	is a fine $\omega_1$-complete ultrafilter on $\mathcal P_\delta(\alpha)$. Let $\tau$ be the language consisting of a binary relation $\in$ and constants $\{c_x\mid x\in V_\gamma\}\cup \{d\}$. Let $T$ be the following $\cL(Q^{WF})(\tau)$-theory:
$${\rm ED}_{\cL(Q^{WF})}(V_\gamma,\in,c_x)_{x\in V_\gamma}\cup\{c_\xi\in d\mid \xi<\alpha\}\cup\{|d|<c_\delta\},$$
where ${\rm ED}_{\cL(Q^{WF})}$ stands for the elementary diagram and each $c_x$ is interpreted by the associated set $x$.

 The theory $T$ is clearly ${<}\delta$-satisfiable as witnessed by the model $(V_\gamma,\in,c_x)_{x\in V_\gamma}$. So $T$ has a well-founded model $M$, and by collapsing we can assume without loss of generality that $M$ is transitive. Thus, we get an elementary embedding $j:V_\gamma\to M$ with $\crit(j)\geq\omega_1$ because the embedding is into a well-founded target. Letting $d^M$ be the interpretation of $d$ in $M$, let $d'=d^M\cap j(\alpha)$. Then, in $M$, $|d'|\leq |d^M|<j(\delta)$ and $j\image \alpha\subseteq d'$ as desired.

\end{proof}
It is not difficult to see that if $\delta$ is $\omega_1$-strongly compact, then there is a measurable cardinal ${\leq}\delta$. Magidor showed in \cite{Magidor:LeastMeasurableStronglyCompact} that it is consistent, relative to a supercompact cardinal, that the least measurable cardinal is the least strongly compact cardinal, and hence, in particular, the least measurable cardinal can be the least $\omega_1$-strongly compact cardinal. Bagaria and Magidor showed that it is consistent, relative to a supercompact cardinal, that the least $\omega_1$-strongly compact cardinal is singular of cofinality greater than or equal to the least measurable cardinal \cite{BagariaMagidor:groupRadicals}. In this situation, a strong compactness cardinal for $\cL(Q^{WF})$ exists, but is greater than the least measurable cardinal. Note also that the canonical model $L[U]$ cannot have an $\omega_1$-strongly compact cardinal $\delta$. The only elementary embeddings in $L[U]$ are iterates of the ultrapower by the unique measure on the unique measurable cardinal $\kappa$ and only finite iterates have a target that is closed under $\omega$-sequences. But the existence of an $\omega_1$-strongly compact cardinal $\delta$ implies that there are elementary embeddings with targets closed under $\omega$-sequences mapping $\delta$ arbitrarily high in the ordinals. Thus, $L[U]$ does not have a strong compactness cardinal for $\cL(Q^{WF})$. Combining Corollary~\ref{cor:ULSWF} with the above results, we get the following corollary.

\begin{corollary} It is consistent that:
\begin{enumerate}
\item $\ULS(\cL(Q^{WF}))=\SULS(\cL(Q^{WF}))$ is the least strong compactness cardinal for $\cL(Q^{WF})$.
\item $\ULS(\cL(Q^{WF}))=\SULS(\cL(Q^{WF}))$ is smaller than the least strong compactness cardinal for $\cL(Q^{WF})$.
\item $\ULS(\cL(Q^{WF}))=\SULS(\cL(Q^{WF}))$ exists, but $\cL(Q^{WF})$ doesn't have a strong compactness cardinal.
\end{enumerate}

\end{corollary}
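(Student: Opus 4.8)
The plan is to read off all three consistency statements directly from the characterizations already established, combined with the known consistency results on $\omega_1$-strongly compact cardinals cited in the discussion above. The common thread is Corollary~\ref{cor:ULSWF}, which identifies $\ULS(\cL(Q^{WF}))=\SULS(\cL(Q^{WF}))$ with the least measurable cardinal, together with Theorem~\ref{th:omega1StronglyCompactCompactnessCardinal}, which identifies the strong compactness cardinals for $\cL(Q^{WF})$ with the $\omega_1$-strongly compact cardinals. Since $\omega_1$-strong compactness is upward closed, the latter says in particular that, whenever an $\omega_1$-strongly compact cardinal exists, the least strong compactness cardinal for $\cL(Q^{WF})$ is exactly the least $\omega_1$-strongly compact cardinal. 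So in every case it suffices to arrange the desired relationship between the least measurable cardinal and the least $\omega_1$-strongly compact cardinal.

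For item (1), I would pass to Magidor's model from \cite{Magidor:LeastMeasurableStronglyCompact}, obtained from a supercompact cardinal, in which the least measurable cardinal $\kappa$ is also the least strongly compact cardinal. Every strongly compact cardinal is $\omega_1$-strongly compact, and every $\omega_1$-strongly compact cardinal has a measurable cardinal less than or equal to it; hence the least $\omega_1$-strongly compact cardinal is squeezed between $\kappa$ and the least strongly compact cardinal, so it equals $\kappa$. By Theorem~\ref{th:omega1StronglyCompactCompactnessCardinal} the least strong compactness cardinal for $\cL(Q^{WF})$ is then $\kappa$, which by Corollary~\ref{cor:ULSWF} is also $\ULS(\cL(Q^{WF}))=\SULS(\cL(Q^{WF}))$.

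For item (2), I would instead use the Bagaria–Magidor model from \cite{BagariaMagidor:groupRadicals}, again obtained from a supercompact, in which the least $\omega_1$-strongly compact cardinal $\lambda$ is singular of cofinality at least the least measurable cardinal $\kappa$. Since a measurable cardinal is regular, $\lambda>\kappa$, so by Theorem~\ref{th:omega1StronglyCompactCompactnessCardinal} the least strong compactness cardinal for $\cL(Q^{WF})$ is $\lambda$, which is strictly larger than $\kappa=\ULS(\cL(Q^{WF}))=\SULS(\cL(Q^{WF}))$ by Corollary~\ref{cor:ULSWF}. For item (3), I would work in $L[U]$: it has a (unique) measurable cardinal, so $\ULS(\cL(Q^{WF}))=\SULS(\cL(Q^{WF}))$ exists by Corollary~\ref{cor:ULSWF}; but, as noted in the discussion above, $L[U]$ has no $\omega_1$-strongly compact cardinal, since its only elementary embeddings are the iterates of the ultrapower by its unique normal measure, and only the finitely many finite iterates have $\omega$-closed targets while $\omega_1$-strong compactness would require $\omega$-closed targets moving the cardinal arbitrarily high. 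Hence by Theorem~\ref{th:omega1StronglyCompactCompactnessCardinal}, $\cL(Q^{WF})$ has no strong compactness cardinal in $L[U]$.

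There is essentially no obstacle beyond assembling the cited ingredients; the only points requiring a moment of care are the two elementary facts already recorded before the corollary — that $\omega_1$-strong compactness of $\delta$ yields a measurable cardinal $\leq\delta$, and the analysis of elementary embeddings of $L[U]$ — and the observation that, because $\omega_1$-strong compactness is upward closed, the least strong compactness cardinal for $\cL(Q^{WF})$ coincides exactly with the least $\omega_1$-strongly compact cardinal rather than merely being bounded by it.
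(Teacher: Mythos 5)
Your proposal is correct and takes essentially the same route as the paper: the corollary is obtained by combining Corollary~\ref{cor:ULSWF} (the $\ULS=\SULS$ number is the least measurable) with Theorem~\ref{th:omega1StronglyCompactCompactnessCardinal} and the consistency results cited just before the corollary (Magidor's model where the least measurable is the least strongly compact, the Bagaria--Magidor model where the least $\omega_1$-strongly compact is singular and hence above the least measurable, and the $L[U]$ analysis showing no $\omega_1$-strongly compact cardinal exists there). The details you supply, such as the squeezing argument in item (1) and the regular-versus-singular comparison in item (2), are exactly the intended ones.
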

Thus, we have an example of a logic for which the $\ULS$ number is always equal to the strong $\ULS$ number, but consistently, it is possible that either the strong compactness cardinal doesn't exist, or it exists and is larger than the the strong $\ULS$ number, or it exists and is equal to the strong $\ULS$ number.

\section{Second-order logic}
Recall that a cardinal $\kappa$ is \emph{extendible} if for every $\alpha>\kappa$, there is $\beta>\kappa$ and an elementary embedding $j:V_\alpha\to V_\beta$ with $\crit(j)=\kappa$.\footnote{The assertion that $j(\kappa)>\alpha$ is often included in the definition of an extendible cardinal, but it can be shown that this leads to an equivalent notion (see \cite[23.15]{kanamori:higher}).} Recall that the least extendible cardinal is the least strong compactness cardinal for the logic $\cL^2$ and thus, $\SULS(\cL^2)$ is bounded by the least extendible cardinal by Proposition~\ref{prop:compactImpliesSULS}. In this section, we show that $\ULS(\cL^2)=\SULS(\cL^2)$ is precisely the least extendible cardinal.

\begin{theorem}\label{thm:ULSSecondOrderLogic}
If $\ULS(\cL^2)$ exists, then it is the least extendible cardinal.\footnote{Independently, this result was also obtained by Yair Hayut, and, also independently, by Will Boney and the second author, using different proofs, respectively.}
\end{theorem}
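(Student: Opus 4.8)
\emph{Outline of a proposed proof.}
One inequality is soft. Any witness to $\SULS(\cL^2)$ is a witness to $\ULS(\cL^2)$, since an $\cL^2$-elementary substructure is in particular a substructure and preserves the sentence $\varphi$; hence $\ULS(\cL^2)\le\SULS(\cL^2)$. Combining this with Proposition~\ref{prop:compactImpliesSULS} and Magidor's theorem that the least extendible cardinal is the least strong compactness cardinal for $\cL^2$, we get $\ULS(\cL^2)\le\SULS(\cL^2)\le(\text{least extendible})$. So, assuming $\delta:=\ULS(\cL^2)$ exists, it suffices to produce an \emph{extendible cardinal} $\kappa\le\delta$: then the least extendible exists, is $\le\delta$, and by the displayed inequality $\delta$ equals it. The bulk of the work is this existence statement, and the plan is to show that \textbf{for cofinally many ordinals $\alpha$ there is a fully elementary embedding $j:V_\alpha\to V_\beta$ with $\crit(j)\le\delta$}. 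Granting that, a pigeonhole argument finishes: the possible critical points lie in $\delta+1$, so if every $A_\kappa=\{\alpha:\text{some such }j\text{ has }\crit(j)=\kappa\}$ were bounded, the union of these $\le\delta+1$ bounded classes would be bounded, contradicting cofinality; hence some fixed $\kappa\le\delta$ occurs as $\crit(j)$ for cofinally many $\alpha$. For such $\kappa$ and any $\lambda>\kappa$, choosing $\alpha>\lambda$ in $A_\kappa$ and the associated $j:V_\alpha\to V_\beta$, the restriction $j\restrict V_\lambda:V_\lambda\to V_{j(\lambda)}$ is elementary with critical point $\kappa$ (because $V_\lambda$ is uniformly definable in $V_\alpha$ from $\lambda$, and satisfaction in the set model $V_\lambda$ is absolute); so $\kappa$ is extendible.

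For the construction, fix $\alpha$ ranging over a cofinal class of sufficiently correct ordinals above $\delta$ (say with $V_\alpha\prec_{\Sigma_2}V$) and consider the structure $\mathcal M=(V_\alpha,\in,\delta,\Tr)$, where $\delta$ is a constant and $\Tr$ a truth predicate for $(V_\alpha,\in)$. Let $\varphi$ be the $\cL^2$-sentence that conjoins: (1) ``the $\in$-relation is well-founded'' (available already in $\cL(Q^{WF})\subseteq\cL^2$); (2) ``$\Tr$ is a truth predicate for $(\cdot,\in)$'', i.e. the first-order $\varphi_{\mathrm{truth}}$; (3) ``I am a rank initial segment $V_\rho$ for some ordinal $\rho$'', using the ability of $\cL^2$ to recognize rank initial segments; and (4) a clause marking the constant $\delta$, chosen so that $\mathcal M\models\varphi$. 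Since $|V_\alpha|\ge\delta$, applying $\ULS$ with target size $\overline\gamma$ chosen as large as we please gives $\mathcal N\supseteq\mathcal M$ with $|\mathcal N|\ge\overline\gamma$ and $\mathcal N\models_{\cL^2}\varphi$. By (1) and the correctness of $\cL^2$ about well-foundedness, $\mathcal N$ is well-founded; collapsing it and using (3) identifies $\mathcal N$ with $(V_{\overline\rho},\in,\overline\delta,\overline\Tr)$ where $\overline\Tr$ is a genuine truth predicate for $(V_{\overline\rho},\in)$ and $\overline\rho$ is arbitrarily large (as $|V_{\overline\rho}|\ge\overline\gamma$). Composing the inclusion $\mathcal M\hookrightarrow\mathcal N$ with the collapse yields $j:V_\alpha\to V_{\overline\rho}$ with $j(\delta)=\overline\delta$, and since $j$ respects the two honest truth predicates it is \emph{fully} elementary rather than a mere substructure embedding — exactly the upgrade from substructure to elementarity provided by the truth-predicate observation. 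Morally this is the GKV argument producing $n$-extendible cardinals $\le\delta$, but with the $\Sigma_n$-elementary embeddings replaced by genuinely elementary ones, which should turn ``$n$-extendible for all $n$'' into ``extendible''.

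The remaining point — and the one I expect to be the main obstacle — is the design of clause (4): it must be true in $\mathcal M$ for all of our $\alpha$ and yet be preserved by $j$ in a way that forces $\overline\delta=j(\delta)>\delta$, so that $\crit(j)\le\delta$. This is the $\cL^2$-analogue of the ``$\delta$ is the largest cardinal'' device of Theorem~\ref{th:ULSWF}, and the difficulty is that the naive ``largest cardinal'' choice wants an $H_{\delta^+}$-shaped domain, incompatible with clause (3) (``I am $V_\rho$''). Reconciling the two is the technical heart of the matter; I would approach it either by first establishing that $\delta$ is already correct enough that $\delta$ is the largest $\kappa$ with $V_\kappa\prec(\text{the whole structure})$ for a cofinal class of suitable $\alpha$ — taking clause (4) to assert precisely this maximality, expressed first-order using $\Tr$, so that in a much larger $V_{\overline\rho}$ the same clause forces $j(\delta)$ to overshoot $\delta$ — or by adjoining an auxiliary constant for a cardinal far above $\alpha$ that carries the ``largest cardinal'' role while $\delta$ is pinned down relative to it. Once clause (4) is secured, the cofinal family of embeddings is in hand, the pigeonhole step delivers an extendible $\kappa\le\delta$, and the soft inequality of the first paragraph completes the identification $\ULS(\cL^2)=(\text{least extendible})$.
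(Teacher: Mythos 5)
Your first paragraph (reduction via $\ULS(\cL^2)\le\SULS(\cL^2)\le$ least extendible, so it suffices to find an extendible $\kappa\le\delta$) and your pigeonhole-plus-restriction step are fine, and they match how the paper frames the problem. But the core of the argument is exactly the part you leave open, and it is a genuine gap, not a routine detail: without your clause (4) there is nothing to prevent the map $j:V_\alpha\to V_{\overline\rho}$ obtained from the $\ULS$ property from being the inclusion, or from having critical point far above $\delta$, and then no extendible cardinal $\le\delta$ is produced. Neither of your proposed repairs is substantiated. For (a), there is no a priori reason that $\delta=\ULS(\cL^2)$ satisfies $V_\delta\prec V_\alpha$ for cofinally many suitable $\alpha$ (that kind of correctness is essentially what you are trying to prove, since it would follow once $\delta$ is known to be the least extendible); for (b), a constant naming a cardinal far above $\alpha$ cannot be interpreted in the structure $V_\alpha$, so the "largest cardinal" device is simply unavailable once you insist the whole model be a rank initial segment — which is precisely the tension you identified but did not resolve.

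It is worth seeing how the paper escapes this tension, because it does so by changing the shape of the argument rather than by finding your clause (4). It argues by contradiction: assuming there is no extendible cardinal $\le\delta$, for each $\gamma\le\delta$ the levels of $\gamma$-extendibility are bounded, and one forms the suprema $\alpha^*$ and $\beta^*$ of the extendibility levels and of the least targets, respectively. The structure used is not a $V_\alpha$ but $\mathcal M=(H_{\rho^+},\in,\delta,\alpha^*,\beta^*,\rho,\Tr)$ with $\rho$ the least cardinal above $|V_{\beta^*+1}|$; the $\cL^2$-sentence says (among other things) that $\rho$ is the largest cardinal and the least cardinal above $|V_{\beta^*+1}|$, and that $V_{\beta^*+1}$ exists, where second-order logic guarantees the extension computes $V_{\overline\beta^*+1}$ correctly as an \emph{element} (so no need for the whole model to be a $V_\rho$). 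A large extension then forces $\overline\rho\gg\rho$ and hence $\overline\beta^*\gg\beta^*$, and since $\alpha^*,\beta^*$ are definable from $\delta$ and computed correctly from the true $V_{\overline\beta^*+1}$, the case $j(\delta)=\delta$ is impossible; this is what yields $\crit(j)=\kappa\le\delta$. Restricting $j$ to $V_{\alpha^*+1}$ then contradicts the maximality of $\alpha^*$. So the missing ingredient in your outline — a mechanism forcing the critical point below $\delta$ — is supplied in the paper by the contradiction hypothesis itself (the suprema $\alpha^*,\beta^*$) together with an $H_{\rho^+}$-shaped domain, rather than by any sentence true in a cofinal family of structures $(V_\alpha,\in,\delta,\Tr)$. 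Until you produce a working analogue of that mechanism, your outline does not establish the theorem.
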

\begin{proof}
Let $\ULS(\cL^2)=\delta$. By Proposition~\ref{prop:compactImpliesSULS} and the leastness property of $\delta$, it suffices to show that there is an extendible cardinal ${\leq}\delta$. So suppose towards a contradiction that there is no extendible cardinal ${\leq}\delta$. Fix $\gamma\leq\delta$. If $\gamma$ is not $\gamma+1$-extendible, let $\alpha_\gamma=\gamma$ and let $\beta_\gamma=\gamma$. So suppose that $\gamma$ is at least $\gamma+1$-extendible. Given $\alpha\geq\gamma+1$ such that $\gamma$ is $\alpha$-extendible, let $\beta(\alpha)$ be the least $\beta$ such that there is an elementary embedding $j_\alpha:V_\alpha\to V_\beta$ with $\crit(j_\alpha)=\gamma$. Let $\alpha_\gamma$ be the supremum of ordinals $\alpha$ such that $\gamma$ is $\alpha$-extendible and let $\beta_\gamma$ be the supremum of the corresponding $\beta(\alpha)$. Let $\alpha^*$ be the supremum of the $\alpha_\gamma$ for $\gamma\leq\delta$ and let $\beta^*$ be the supremum of the $\beta_\gamma$. Let $\rho$ be the least cardinal above $|V_{\beta^*+1}|$. In particular, $V_{\beta^*+1}\in H_{\rho^+}$. Consider the structure $$\mathcal M=( H_{\rho^+}, \in,\delta,\alpha^*,\beta^*,\rho,\Tr),$$ where $\Tr$ is a truth predicate for $( H_{\rho^+},\in)$. Then $\mathcal M$ satisfies the sentence $\varphi$ in the second-order logic $\cL^2$, which is the conjunction of the sentences:
\begin{enumerate}
\item I am well-founded.
\item $\delta$ is a cardinal.
\item $\alpha^*$ is the supremum of the $\alpha_\gamma$ for $\gamma\leq\delta$, where $\alpha_\gamma$ are defined as above.
\item $\beta^*$ is the supremum of the $\beta_\gamma$ for $\gamma\leq\delta$, where $\beta_\gamma$ are defined as above.
\item $V_{\beta^*+1}$ exists.
\item $\Tr$ is a truth predicate for $( H_{\rho^+},\in)$.
\item $\rho$ is the largest cardinal.
\item $\rho$ is the least cardinal above $|V_{\beta^*+1}|$.
\end{enumerate}
Since $\delta=\ULS(\cL^2)$, there is a model $$\mathcal N=(N,{\mathsf E},\overline \delta,\overline\alpha^*,\overline\beta^*,\overline\rho,\overline \Tr\ra\models_{\cL^2}\varphi,$$ of cardinality much larger than $\rho$ and having $\mathcal M$ as a substructure. Since ${\mathsf E}$ is well-founded, we can assume, by collapsing, that ${\mathsf E}=\in$ and there an embedding $$j:H_{\rho^+}\to N$$ such that $j(\delta)=\overline\delta$, $j(\alpha^*)=\overline\alpha^*$, $j(\beta^*)=\overline\beta^*$, and $j(\rho)=\overline\rho$. Since our structures contain a truth predicate, the embedding $j$ is elementary. The model $N$ must contain the correct $V_{\overline\beta^*+1}$ because of our usage of second-order logic. Since \hbox{$|N|\gg \rho$}, it follows that $\overline\rho\gg\rho$. Thus, since $\overline\rho$ is the least cardinal above $|V_{\overline\beta^*+1}|$ in $N$, it must be the case that $\overline\beta^*\gg\beta^*$.

Let's argue that $\delta<\overline\delta$. If $\delta=\overline\delta$, then since $N$ has the correct $V_{\beta^*+1}$, it would compute $\alpha^*$ and $\beta^*$ the same as $V$, but this would contradict its definition of $\overline\beta^*$. Thus, $j$ has a critical point $\kappa\leq\delta$. Let $$j:V_{\alpha^*+1}\to V_{\overline\alpha^*+1}$$ be the restriction of $j$. Then $j$ witnesses that $\kappa$ is $\alpha^*+1$-extendible, contradicting that $\alpha^*$ was the supremum of extendibility for $\gamma\leq\delta$.
\end{proof}
\begin{corollary}
The following are equivalent for a cardinal $\kappa$.
\begin{enumerate}
\item $\kappa$ is the least extendible cardinal.
\item $\kappa$ is the least strong compactness cardinal for $\cL^2$.
\item $\kappa=\SULS(\cL^2)$.
\item $\kappa=\ULS(\cL^2)$.
\end{enumerate}	
\end{corollary}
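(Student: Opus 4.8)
The plan is to treat the corollary as a bookkeeping exercise that assembles Theorem~\ref{thm:ULSSecondOrderLogic}, Proposition~\ref{prop:compactImpliesSULS}, and Magidor's theorem that the least extendible cardinal is the least strong compactness cardinal for $\cL^2$, together with one easy general observation relating the two upward L\"owenheim-Skolem-Tarski numbers.

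First I would record the general fact that for any logic $\sL$, if $\SULS(\sL)$ exists then $\ULS(\sL)$ exists and $\ULS(\sL)\leq\SULS(\sL)$. This is immediate from the definitions: if $\delta=\SULS(\sL)$ and $M\models_\sL\varphi$ has size $\gamma\geq\delta$, then for any $\overline\gamma>\gamma$ the strong property produces an $\sL$-elementary superstructure $\overline M\succ_\sL M$ of size at least $\overline\gamma$, and $\overline M\models_\sL\varphi$ by $\sL$-elementarity; hence $\delta$ has the $\ULS$ property, so $\ULS(\sL)$ exists and is at most $\delta$.

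The equivalences then follow in a short cycle. The implication $(4)\Rightarrow(1)$ is exactly Theorem~\ref{thm:ULSSecondOrderLogic}. The implication $(1)\Rightarrow(2)$ is Magidor's theorem. For $(2)\Rightarrow(3)$: assuming $\kappa$ is the least strong compactness cardinal for $\cL^2$, Proposition~\ref{prop:compactImpliesSULS} gives that $\SULS(\cL^2)$ exists and $\SULS(\cL^2)\leq\kappa$; by the general observation $\ULS(\cL^2)$ exists and $\ULS(\cL^2)\leq\SULS(\cL^2)\leq\kappa$; by Theorem~\ref{thm:ULSSecondOrderLogic} $\ULS(\cL^2)$ is the least extendible cardinal, which by Magidor's theorem is $\kappa$; so $\kappa\leq\ULS(\cL^2)\leq\SULS(\cL^2)\leq\kappa$ and all three are equal, giving $\kappa=\SULS(\cL^2)$. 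For $(3)\Rightarrow(4)$: from $\kappa=\SULS(\cL^2)$ the general observation gives that $\ULS(\cL^2)$ exists with $\ULS(\cL^2)\leq\kappa$; by Theorem~\ref{thm:ULSSecondOrderLogic}, $\ULS(\cL^2)$ is the least extendible cardinal, which by Magidor's theorem is the least strong compactness cardinal for $\cL^2$, so Proposition~\ref{prop:compactImpliesSULS} yields $\SULS(\cL^2)\leq\ULS(\cL^2)$; hence $\ULS(\cL^2)=\SULS(\cL^2)=\kappa$.

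There is no genuine obstacle here: all of the substance sits in the already-proved Theorem~\ref{thm:ULSSecondOrderLogic} and the cited results of Magidor. The only points requiring a moment's care are getting the direction of the inequality $\ULS(\cL^2)\leq\SULS(\cL^2)$ right and checking that merely knowing ``$\SULS(\cL^2)$ exists'' suffices to invoke Theorem~\ref{thm:ULSSecondOrderLogic}, which it does via the general observation above.
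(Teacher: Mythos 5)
Your proof is correct and assembles the corollary exactly as the paper intends: the general inequality $\ULS(\sL)\leq\SULS(\sL)$, Proposition~\ref{prop:compactImpliesSULS}, Magidor's theorem, and Theorem~\ref{thm:ULSSecondOrderLogic} are precisely the ingredients behind the paper's (unwritten) argument. No issues.
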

Thus, we have an example of a logic stronger than first-order logic for which the $\ULS$ number is same as the strong $\ULS$ number and the same as the least strong compactness cardinal.
\section{Sort logics}
In this section we show that for every $n<\omega$, $\ULS(\cL^{s,n})=\SULS(\cL^{s,n})$ is the least $C^{(n)}$-extendible cardinal.

Recall that for every $n<\omega$, $C^{(n)}=\{\alpha\in\Ord\mid V_\alpha\prec_{\Sigma_n} V\}$ is the class of $\Sigma_n$-reflecting cardinals.
Recall next that a cardinal $\kappa$ is \emph{$C^{(n)}$-extendible} if for every $\alpha>\kappa$ in $C^{(n)}$, there is $\beta\in C^{(n)}$ and an elementary embedding $j:V_\alpha\to V_\beta$ with $\crit(j)=\kappa$.\footnote{Again, the assertion that $j(\kappa)>\alpha$ is often included in the definition, but it follows without loss of generality.} Boney showed in \cite{Boney:LargeCardinalLogics} that the least $C^{(n)}$-extendible cardinal is the least strong compactness cardinal for the sort logic $\cL^{s,n}$.

By exactly mimicking the proof of Theorem~\ref{thm:ULSSecondOrderLogic} and using that the assertion $V_\alpha\prec_{\Sigma_n}V$ is expressible in the logic $\cL^{s,n}$, we get:
\begin{theorem}
Fix $n<\omega$. If $\ULS(\cL^{s,n})$ exists, then it is the least $C^{(n)}$-extendible cardinal.\footnote{Independently, this results was also obtained by Will Boney and the second author, using a different proof.}
\end{theorem}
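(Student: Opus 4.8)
The plan is to copy the proof of Theorem~\ref{thm:ULSSecondOrderLogic} essentially verbatim, replacing extendibility by $C^{(n)}$-extendibility throughout and replacing the second-order assertions that pin down rank-initial segments $V_\alpha$ by the $\cL^{s,n}$-assertions that additionally pin down membership in $C^{(n)}$, i.e.\ that express ``$V_\alpha\prec_{\Sigma_n}V$''. Write $\delta=\ULS(\cL^{s,n})$. By Proposition~\ref{prop:compactImpliesSULS} and the leastness of $\delta$ (using Boney's theorem \cite{Boney:LargeCardinalLogics} that the least $C^{(n)}$-extendible cardinal is the least strong compactness cardinal for $\cL^{s,n}$, together with the evident inequality $\ULS(\cL^{s,n})\le\SULS(\cL^{s,n})$ when the latter exists), it suffices to produce a $C^{(n)}$-extendible cardinal $\le\delta$. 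Assume toward a contradiction that there is none.

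For each $\gamma\le\delta$, if $\gamma$ fails to be $C^{(n)}$-$\alpha$-extendible for the least $\alpha\in C^{(n)}$ above $\gamma$, put $\alpha_\gamma=\beta_\gamma=\gamma$; otherwise let $\alpha_\gamma$ be the supremum of the ordinals $\alpha\in C^{(n)}$ such that there are $\beta\in C^{(n)}$ and an elementary $j\colon V_\alpha\to V_\beta$ with $\crit(j)=\gamma$, and let $\beta_\gamma$ be the supremum of the least such $\beta$. (As in the second-order case, $\gamma$ is $C^{(n)}$-extendible exactly when this supremum is unbounded, using that for $\gamma<\alpha'<\alpha$ with $\alpha',\alpha\in C^{(n)}$ one can reflect a witnessing embedding at level $\alpha$ down to one at level $\alpha'$ with target again in $C^{(n)}$.) Let $\alpha^*$ be the least element of $C^{(n)}$ strictly above $\sup_{\gamma\le\delta}\alpha_\gamma$, let $\beta^*$ be the least element of $C^{(n)}$ strictly above $\max(\alpha^*,\sup_{\gamma\le\delta}\beta_\gamma)$, and let $\rho$ be the least cardinal above $|V_{\beta^*+1}|$. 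Consider $\mathcal M=(H_{\rho^+},\in,\delta,\alpha^*,\beta^*,\rho,\Tr)$ with $\Tr$ a truth predicate for $(H_{\rho^+},\in)$. Then $\mathcal M\models_{\cL^{s,n}}\varphi$, where $\varphi$ conjoins: ``I am well-founded''; ``$\delta$ is a cardinal''; ``$\alpha^*\in C^{(n)}$ and is least in $C^{(n)}$ above the supremum of the $\alpha_\gamma$ for $\gamma\le\delta$, defined as above''; the analogous clause for $\beta^*$; ``$V_{\beta^*+1}$ exists''; ``$\Tr$ is a truth predicate''; ``$\rho$ is the largest cardinal''; and ``$\rho$ is least above $|V_{\beta^*+1}|$''. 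Every clause except those mentioning $C^{(n)}$ is already available in $\cL^2$, and those are $\cL^{s,n}$-expressible by the construction recalled in Section~\ref{sec:abstractLogics} (see \cite{BagariaDimopoulosGitmanMagidor:LargeCardinalLogics}).

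Now apply $\ULS(\cL^{s,n})=\delta$ to get $\mathcal N=(N,\mathsf E,\bar\delta,\bar\alpha^*,\bar\beta^*,\bar\rho,\bar\Tr)\models_{\cL^{s,n}}\varphi$ of size much larger than $\rho$ and containing $\mathcal M$ as a substructure. Well-foundedness lets us collapse to $\mathsf E={\in}$ with $N$ transitive, yielding an elementary (via $\Tr$) embedding $j\colon H_{\rho^+}\to N$ with $j(\delta)=\bar\delta$, $j(\alpha^*)=\bar\alpha^*$, $j(\beta^*)=\bar\beta^*$, $j(\rho)=\bar\rho$. The second-order clause forces $N$ to carry the true $V_{\bar\beta^*+1}$, and the $C^{(n)}$-clauses, evaluated correctly in $V$ because sort-logic satisfaction is and $N$ is transitive, force $\bar\alpha^*,\bar\beta^*\in C^{(n)}$ and force $N$ to recompute $C^{(n)}\cap\bar\beta^*$ and the ordinals $\alpha_\gamma,\beta_\gamma$ ($\gamma\le\delta$) from its correct $V_{\bar\beta^*+1}$ exactly as $V$ does. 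Just as in Theorem~\ref{thm:ULSSecondOrderLogic}, $|N|\gg\rho$ gives $\bar\rho\gg\rho$ and hence $\bar\beta^*\gg\beta^*$; and $\delta=\bar\delta$ is impossible, since then $N$ would recompute $\alpha^*$ and $\beta^*$ correctly, contradicting $\bar\beta^*\gg\beta^*$. So $j$ has a critical point $\kappa\le\delta$, the restriction $j\colon V_{\alpha^*+1}\to V_{\bar\alpha^*+1}$ is a genuine elementary embedding (its target being an initial segment of the true $V_{\bar\beta^*+1}$), hence so is $j\restrict V_{\alpha^*}\colon V_{\alpha^*}\to V_{\bar\alpha^*}$, with critical point $\kappa$ and $\alpha^*,\bar\alpha^*\in C^{(n)}$. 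Since $\alpha^*$ exceeds $\alpha_\kappa$, this witnesses a level of $C^{(n)}$-extendibility of $\kappa$ beyond $\alpha_\kappa$, contradicting the choice of $\alpha_\kappa$.

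The one genuinely new point — the only place this diverges from the second-order argument — is ensuring that the target $N$ is \emph{correct about $C^{(n)}$}: we need ``$\bar\alpha^*,\bar\beta^*\in C^{(n)}$'' to hold in $V$, not merely in $N$, and we need $N$ to compute $C^{(n)}\cap\bar\beta^*$ and the ordinals $\alpha_\gamma,\beta_\gamma$ absolutely. This is exactly what the $\cL^{s,n}$-expressibility (and $V$-correctness) of ``$V_\alpha\prec_{\Sigma_n}V$'' delivers once it is built into $\varphi$ and $N$ carries the true $V_{\bar\beta^*+1}$; everything else is line-for-line that of Theorem~\ref{thm:ULSSecondOrderLogic}.
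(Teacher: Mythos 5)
Your proposal is correct and is essentially the paper's own proof: the paper establishes this theorem precisely by mimicking the proof of Theorem~\ref{thm:ULSSecondOrderLogic} for second-order logic, using the $\cL^{s,n}$-expressibility (and $V$-correct evaluation) of ``$V_\alpha\prec_{\Sigma_n}V$'' to ensure the relevant ordinals land in $C^{(n)}$, which is exactly the adaptation you carry out. Your extra care in taking $\alpha^*,\beta^*$ to be the least elements of $C^{(n)}$ above the relevant suprema, and in noting the reflection of witnessing embeddings to smaller levels of $C^{(n)}$, just fills in details the paper leaves implicit.
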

\begin{corollary}
The following are equivalent for a cardinal $\kappa$ and $n<\omega$.
\begin{enumerate}
\item $\kappa$ is the least $C^{(n)}$-extendible cardinal.
\item $\kappa$ is the least strong compactness cardinal for $\cL^{s,n}$.
\item $\kappa=\SULS(\cL^{s,n})$.
\item $\kappa=\ULS(\cL^{s,n})$.
\end{enumerate}	

\end{corollary}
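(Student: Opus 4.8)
The plan is to derive the corollary by assembling results already in hand, with no new construction required. The anchor is Boney's theorem, cited above, that the least $C^{(n)}$-extendible cardinal is the least strong compactness cardinal for $\cL^{s,n}$; this gives the equivalence of~(1) and~(2) outright and, more to the point, shows that those two conditions describe one and the same cardinal, which I will call $\kappa_0$ (when it exists).

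First I would record the general fact that $\ULS(\sL)\leq\SULS(\sL)$ for every logic $\sL$ for which the strong $\ULS$ number exists: if $\delta$ witnesses the $\SULS$ property and $M\models_{\sL}\varphi$ has size $\gamma\geq\delta$, then an $\sL$-elementary superstructure $\overline M\succ_{\sL}M$ of size at least $\overline\gamma$ is in particular a superstructure of $M$ with $\overline M\models_{\sL}\varphi$ (by $\sL$-elementarity), so $\delta$ also witnesses the $\ULS$ property. Combining this with Proposition~\ref{prop:compactImpliesSULS} and Boney's theorem yields: whenever $\kappa_0$ exists, $\SULS(\cL^{s,n})$ exists and
\[
\ULS(\cL^{s,n})\leq\SULS(\cL^{s,n})\leq\kappa_0.
\]

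Next I would feed in the theorem established just above, namely that if $\ULS(\cL^{s,n})$ exists then it equals the least $C^{(n)}$-extendible cardinal $\kappa_0$. Now suppose any one of~(1)--(4) holds for a cardinal $\kappa$. In every case $\kappa_0$ exists: for~(1) and~(2) directly, and for~(3) or~(4) because then $\ULS(\cL^{s,n})$ exists (using $\ULS\leq\SULS$ in case~(3)), whence $\kappa_0$ exists by that theorem and in fact $\ULS(\cL^{s,n})=\kappa_0$. Since $\kappa_0$ exists, the displayed inequalities collapse, giving
\[
\ULS(\cL^{s,n})=\SULS(\cL^{s,n})=\kappa_0=\text{the least strong compactness cardinal for }\cL^{s,n}.
\]
Thus all four quantities name the same cardinal, so each of~(1)--(4) holds for $\kappa$ exactly when $\kappa$ is that cardinal, which is the asserted equivalence (and if no $C^{(n)}$-extendible cardinal exists, then, by the same results, none of~(1)--(4) holds for any $\kappa$, so the equivalence is vacuous).

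Since every ingredient is available, I do not expect a genuine obstacle. The one point that needs care is not to assume tacitly that $\kappa_0$ exists: the argument must begin from whichever of~(1)--(4) is hypothesised, first derive the existence of $\kappa_0$, and only then collapse the chain of inequalities. Alternatively, one could isolate a short lemma stating that $\ULS(\sL)=\SULS(\sL)$ equals the least strong compactness cardinal of $\sL$ whenever $\ULS(\sL)$ exists and is a strong compactness cardinal, and apply it with $\sL=\cL^{s,n}$ together with Boney's theorem; the direct argument above is the quickest route.
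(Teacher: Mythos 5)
Your proposal is correct and matches the argument the paper intends (the corollary is stated without proof, being the routine assembly of the preceding theorem that $\ULS(\cL^{s,n})$, if it exists, equals the least $C^{(n)}$-extendible cardinal, Boney's identification of that cardinal with the least strong compactness cardinal for $\cL^{s,n}$, Proposition~\ref{prop:compactImpliesSULS}, and the trivial bound $\ULS\leq\SULS$). Your care about not presupposing the existence of the least $C^{(n)}$-extendible cardinal before deriving it from whichever clause is assumed is exactly the right point to attend to.
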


Recall that Makowsky showed that Vop\v{e}nka's Principle is equivalent to the assertion that every logic has a strong compactness cardinal. Bagaria showed that Vop\v{e}nka's Principle is equivalent to the assertion that for every $n<\omega$, there is a $C^{(n)}$-extendible cardinal \cite{Bagaria:VopenkaCnExtendibility}. It follows that:
\begin{theorem}
Vop\v{e}nka's Principle holds if and only if for every logic $\sL$, $\ULS(\sL)$ exists.
\end{theorem}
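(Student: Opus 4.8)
The plan is to derive both implications from results already established, so that no new construction is needed. First I would handle the easy direction: assuming Vop\v{e}nka's Principle, Makowsky's theorem \cite[Theorem 2]{Makowsky:Vopenka} gives every logic $\sL$ a strong compactness cardinal $\kappa$, and then Proposition~\ref{prop:compactImpliesSULS} gives $\SULS(\sL)\leq\kappa$. To finish this direction I would record the trivial inequality $\ULS(\sL)\leq\SULS(\sL)$: if $\delta=\SULS(\sL)$ and $M\models_\sL\varphi$ has size $\gamma\geq\delta$, then for any $\overline\gamma>\gamma$ there is $\overline M\succ_\sL M$ of size at least $\overline\gamma$, and since $\varphi$ is a sentence $\overline M\models_\sL\varphi$ while $M\subseteq\overline M$, as required. (This half is in fact already observed after Proposition~\ref{prop:compactImpliesSULS}.)

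For the converse I would argue via Bagaria's characterization \cite{Bagaria:VopenkaCnExtendibility}: Vop\v{e}nka's Principle is equivalent to the existence, for every $n<\omega$, of a $C^{(n)}$-extendible cardinal, so it suffices to produce one for each fixed $n$. Here I would appeal to the sort logics. The logic $\cL^{s,n}$ is a bona fide abstract logic in the sense of Section~\ref{sec:abstractLogics} (in particular it has an occurrence number), so the hypothesis that $\ULS$ numbers always exist gives that $\ULS(\cL^{s,n})$ exists, and the theorem above on sort logics then identifies $\ULS(\cL^{s,n})$ with the least $C^{(n)}$-extendible cardinal. In particular such a cardinal exists; since $n$ was arbitrary, Bagaria's criterion yields Vop\v{e}nka's Principle.

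The only part carrying real content is the input theorem $\ULS(\cL^{s,n})=\text{least }C^{(n)}\text{-extendible}$, which itself rests on mimicking the proof of Theorem~\ref{thm:ULSSecondOrderLogic} and on the $\cL^{s,n}$-expressibility of $V_\alpha\prec_{\Sigma_n}V$; in the present argument that is imported wholesale, so the remaining obstacle is merely organizational — confirming that each $\cL^{s,n}$ satisfies the formal requirements for being a logic, and that $\SULS(\sL)$ always dominates $\ULS(\sL)$. No set-theoretic assumptions beyond the quoted theorems of Makowsky and Bagaria enter.
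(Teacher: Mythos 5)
Your proof is correct and follows essentially the same route as the paper: the forward direction via Makowsky's theorem and Proposition~\ref{prop:compactImpliesSULS} (plus the trivial bound $\ULS(\sL)\leq\SULS(\sL)$), and the converse via the sort-logic theorem identifying $\ULS(\cL^{s,n})$ with the least $C^{(n)}$-extendible cardinal together with Bagaria's characterization of Vop\v{e}nka's Principle. Nothing further is needed.
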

\section{The logics $\cL_{\eta,\eta}$}
In this section, we consider infinitary logics $\cL_{\eta,\eta}$ with $\eta$ an uncountable regular cardinal. We show that $\SULS(\cL_{\kappa,\kappa})=\kappa$ if and only if $\kappa$ is a tall cardinal, and, more generally, that the existence of $\ULS$ and $\SULS$ numbers for these logics is related to the existence of tall-like cardinals.

Recall that for every $\alpha<\eta$, there is a sentence $\psi_\alpha$ in $\cL_{\eta,\eta}$, which over a transitive model of set theory $N$, expresses closure under $\alpha$-sequences, $N^{\alpha}\subseteq N$.

\begin{proposition}\label{prop:ulsLetaetageqeta}
If $\ULS(\cL_{\eta,\eta})$ exists, then $\ULS(\cL_{\eta,\eta})\geq\eta$.
\end{proposition}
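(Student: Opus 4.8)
The plan is to show that no cardinal $\delta<\eta$ can possess the defining property of the $\ULS$ number, so that if $\ULS(\cL_{\eta,\eta})$ exists it must be at least $\eta$. The crucial observation is that for a cardinal $\delta<\eta$, having a universe of size at most $\delta$ is expressible by a single $\cL_{\eta,\eta}$-sentence, namely
\[
\varphi_\delta \;:=\; \exists (x_i)_{i<\delta}\,\forall y\,\bigvee_{i<\delta} y=x_i .
\]
This is a legitimate $\cL_{\eta,\eta}$-sentence: the existential block quantifies over $\delta<\eta$ variables, the disjunction ranges over $\delta<\eta$ disjuncts which are jointly in the $<\eta$-many free variables $\{y\}\cup\{x_i\mid i<\delta\}$, and the matrix $\forall y\,\bigvee_{i<\delta}y=x_i$ has $\delta<\eta$ free variables, so all the formula-formation constraints of $\cL_{\eta,\eta}$ are satisfied. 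A $\tau$-structure satisfies $\varphi_\delta$ exactly when its underlying set has cardinality at most $\delta$.

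Next I would argue by contradiction. Suppose $\delta<\eta$ were a cardinal with the $\ULS$ property for $\cL_{\eta,\eta}$. Take $\tau$ to be any language (say, the empty language) and let $M$ be a $\tau$-structure whose universe has size exactly $\delta$; then $M\models_{\cL_{\eta,\eta}}\varphi_\delta$ and $|M|=\delta\geq\delta$. Applying the $\ULS$ property with the target cardinal $\overline\gamma=\delta^+>\delta$ would yield a $\tau$-structure $\overline M$ of size at least $\delta^+$ with $M\subseteq\overline M$ and $\overline M\models_{\cL_{\eta,\eta}}\varphi_\delta$. But $\overline M\models\varphi_\delta$ forces $|\overline M|\leq\delta<\delta^+$, a contradiction. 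Hence no $\delta<\eta$ has the $\ULS$ property, and therefore if $\ULS(\cL_{\eta,\eta})$ exists it is $\geq\eta$.

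I do not expect a genuine obstacle here; the only point requiring care is confirming that $\varphi_\delta$ really meets the syntactic restrictions of $\cL_{\eta,\eta}$ for every cardinal $\delta<\eta$, which it does, using $\delta<\eta$ at each of the three relevant places (the number of quantified variables, the number of disjuncts, and the number of free variables), and without any appeal to the regularity of $\eta$. It is also worth recording explicitly that the definition of the $\ULS$ number ranges over all languages and all sentences, so exhibiting the single structure $M$, the single sentence $\varphi_\delta$, and the single target $\overline\gamma=\delta^+$ is enough to refute the $\ULS$ property at $\delta$.
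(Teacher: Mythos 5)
Your argument is correct. It follows the same overall strategy as the paper -- exhibit a structure of size $\delta$ together with an $\cL_{\eta,\eta}$-sentence that caps the size of its models, contradicting the upward extension property at any $\delta<\eta$ -- but the key sentence is different. You use the cardinality-bounding sentence $\exists (x_i)_{i<\delta}\,\forall y\,\bigvee_{i<\delta} y=x_i$, which exploits the full quantifier strength of $\cL_{\eta,\eta}$ (a block of $\delta<\eta$ quantified variables and a $\delta$-fold disjunction in $\delta+1<\eta$ free variables), and your verification that this is syntactically legal is exactly right. The paper instead works with the structure $(\delta,\in)$ and the sentence asserting ``I am a well-order with no element whose predecessors have order type $\delta$'', using the formulas $\varphi^\delta_\psi$ already introduced earlier together with the expressibility of well-foundedness in $\cL_{\omega_1,\omega_1}\subseteq\cL_{\eta,\eta}$. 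Your route is more self-contained and elementary: it needs no well-foundedness quantifier machinery and no uncountability of $\eta$, only that $\cL_{\delta^+,\delta^+}\subseteq\cL_{\eta,\eta}$; the paper's route reuses formulas it has already set up and gets by with countable quantifier blocks (well-foundedness plus $\cL_{\delta^+,\omega}$-definability of order types), which is a mild economy but not needed for this proposition. Either proof establishes the statement.
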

\begin{proof}
Recall that for every ordinal $\xi<\eta$ and definable binary relation $\psi(y,x)$, there is a corresponding formula $\varphi^\xi_{\psi}(x)$ expressing that the relation given by $\psi$ when restricted to the predecessors of $x$ is isomorphic to the well-order $(\xi,\in)$.  Suppose that $\ULS(\cL_{\eta,\eta})=\delta<\eta$. Consider the model $\mathcal M=(\delta,\in)$ and let $\psi(y,x):=y\in x$. Then $\mathcal M$ satisfies that $\in$ is a well-order and there is no $x$ satisfying $\varphi^\delta_\psi(x)$.  But any larger well-order will have a witness for $\varphi^{\delta}_\psi(x)$.
\end{proof}

Recall that a cardinal $\kappa$ is $\theta$-\emph{tall}, for $\theta>\kappa$, if there exists an elementary embedding $j:V\to M$ with $\crit(j)=\kappa$, $j(\kappa)>\theta$ and $M^\kappa\subseteq M$, and a cardinal $\kappa$ is \emph{tall} if it is $\theta$-tall for every $\theta>\kappa$. The difference between a tallness embedding and an iterated measurability embedding is the closure on the target model $M$. More generally, a cardinal $\kappa$ is $\theta$-\emph{tall with closure $\lambda$} ($\omega\leq \lambda\leq\kappa$) if  there exists an elementary embedding $j:V\to M$ with $\crit(j)=\kappa$, $j(\kappa)>\theta$ and $M^\lambda\subseteq M$, and a cardinal $\kappa$ is \emph{tall with closure $\lambda$} if it is $\theta$-tall with closure $\lambda$ for every $\theta>\kappa$. If the target model $M$ has closure $M^{{<}\lambda}\subseteq M$, then we say that $\kappa$ is ($\theta$-)\emph{tall with closure ${<}\lambda$}. All these cardinals were introduced in \cite{Hamkins:TallCardinals}.

\begin{proposition}[Theorem 5.1, Hamkins \cite{Hamkins:TallCardinals}]\label{prop:tall}
If a cardinal $\kappa$ is tall with closure ${<}\kappa$, then $\kappa$ is tall.
\end{proposition}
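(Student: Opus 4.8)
The plan is to bootstrap from the weaker ``closure ${<}\kappa$'' hypothesis to genuine tallness by composing two ultrapowers: a first one that supplies full $\kappa$-closure but may move $\kappa$ only a little, followed by a second, \emph{internal} ultrapower which sends the image of $\kappa$ arbitrarily high without spoiling the closure gained in the first step.

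First I would note that the hypothesis already makes $\kappa$ measurable: it provides, in particular, an elementary embedding $j\colon V\to M$ with $\crit(j)=\kappa$, and then $\mu=\{X\subseteq\kappa : \kappa\in j(X)\}$ is a normal measure on $\kappa$. Let $j_\mu\colon V\to N=\Ult(V,\mu)$ be the associated ultrapower, so that $N^\kappa\subseteq N$ in $V$, though $j_\mu(\kappa)$ may be small. Now fix a target ordinal $\theta$; since $\theta'$-tallness trivially implies $\theta$-tallness whenever $\theta<\theta'$, we may assume $\theta>j_\mu(\kappa)$. By elementarity of $j_\mu$ together with the hypothesis, $N$ satisfies that $j_\mu(\kappa)$ is tall with closure ${<}j_\mu(\kappa)$. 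Here I would invoke the standard fact, developed in \cite{Hamkins:TallCardinals}, that $\theta$-tallness with a prescribed ${<}\kappa$ amount of closure is a first-order property --- witnessed by a (possibly long-support) extender --- so that it transfers under elementary embeddings. Reading this off inside $N$ at the target $\theta$ produces an extender $E\in N$ and the internal ultrapower $i\colon N\to P=\Ult^N(N,E)$ with $i,P\in N$, $\crit(i)=j_\mu(\kappa)$, $i(j_\mu(\kappa))>\theta$, and $N\models P^{{<}j_\mu(\kappa)}\subseteq P$; since $N$ is transitive and $E\in N$, the construction is absolute enough that $i$ is a genuine elementary embedding in $V$.

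Then I would set $j=i\circ j_\mu\colon V\to P$, an elementary embedding. Its critical point is $\kappa$, because $j_\mu$ fixes every ordinal below $\kappa$ while $\crit(i)=j_\mu(\kappa)>\kappa$, and $j(\kappa)=i(j_\mu(\kappa))>\theta$. The crucial step is the closure computation $P^\kappa\subseteq P$ in $V$: given $f\colon\kappa\to P$ with $f\in V$, transitivity of $N$ gives $P\subseteq N$, hence $f\colon\kappa\to N$ and so $f\in N$ because $N^\kappa\subseteq N$; since $\kappa<j_\mu(\kappa)$ and ``$f$ is a function from $\kappa$ into $P$'' is absolute between $N$ and $V$, the instance $N\models P^{{<}j_\mu(\kappa)}\subseteq P$ delivers $f\in P$. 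Thus $j$ witnesses that $\kappa$ is $\theta$-tall, and since $\theta$ was arbitrary, $\kappa$ is tall.

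I expect the main obstacle to be the expressibility/transfer step: one must identify the correct set-sized witness --- a long extender simultaneously coding $j(\kappa)>\theta$ and the closure $M^{{<}\kappa}\subseteq M$ --- so that elementarity of $j_\mu$ legitimately hands the property down to $N$; and one must be careful to use ``$P^{{<}j_\mu(\kappa)}\subseteq P$'' only in its $N$-internal form, since the closure one actually extracts in $V$ is merely $P^\kappa\subseteq P$, which is exactly what tallness requires.
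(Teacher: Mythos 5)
Your proof is correct, and it is essentially the argument behind the result as cited: the paper states this proposition without proof, referring to Hamkins' Theorem 5.1, whose proof is exactly your two-step composition of the normal-measure ultrapower $j_\mu\colon V\to N$ (supplying $\kappa$-closure) with an internal embedding of $N$ witnessing tallness of $j_\mu(\kappa)$ with closure ${<}j_\mu(\kappa)$ (supplying the height), followed by the same closure computation for the composite. The one fact you invoke --- that $\theta$-tallness with closure ${<}\kappa$ is captured by set-sized (extender) witnesses and hence transfers under elementarity --- is precisely what Lemma~\ref{lem:setEmbeddingsTall} records, so that appeal is legitimate.
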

The following lemma can be shown using similar (in fact, easier) arguments as Corollary \ref{CharPushingUp1}, so we will omit the proof here.
\begin{lemma}\label{lem:setEmbeddingsTall}
A cardinal $\kappa$ is $\theta$-tall with closure ${<}\kappa$ if and only if for some $\alpha > \kappa$ there is an elementary embedding $h: V_\alpha \rightarrow N$ with $\crit(h) = \kappa$, $h(\kappa) > \theta$ and such that $N$ has, for all $\lambda < \kappa$, all functions $\lambda \rightarrow [j(\kappa)]^{< \omega}$.
		
\end{lemma}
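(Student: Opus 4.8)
# Proof Proposal for Lemma~\ref{lem:setEmbeddingsTall}

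The plan is to prove the biconditional by reducing the "full" tallness embedding $j : V \to M$ with closure $M^{{<}\kappa} \subseteq M$ and $j(\kappa) > \theta$ to a set-sized version, and conversely to reconstitute a full embedding from a set-sized one. For the forward direction, suppose $\kappa$ is $\theta$-tall with closure ${<}\kappa$, witnessed by $j : V \to M$. Pick $\alpha > \kappa$ large enough that $V_\alpha$ absorbs the relevant data: specifically, choose $\alpha$ so that $\theta < \alpha$, $\alpha$ is a limit with $V_\alpha \prec_{\Sigma_1} V$ (or simply $\alpha$ a beth-fixed point well above $\theta$ and above $j(\kappa)$ pulled back appropriately), and restrict $j$ to $h = j \restriction V_\alpha : V_\alpha \to j(V_\alpha) = V_{j(\alpha)}^M \subseteq N$, where we let $N = V_{j(\alpha)}^M$. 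Then $\crit(h) = \kappa$, $h(\kappa) = j(\kappa) > \theta$, and the closure $M^{{<}\kappa} \subseteq M$ gives us that for every $\lambda < \kappa$, every function $f : \lambda \to [j(\kappa)]^{<\omega}$ lies in $M$; since such $f$ has rank well below $j(\alpha)$ (as $j(\kappa) < j(\alpha)$ and $\lambda < \kappa < j(\alpha)$), it lies in $N = V_{j(\alpha)}^M$. This gives the stated conclusion.

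For the converse, suppose we have $h : V_\alpha \to N$ with $\crit(h) = \kappa$, $h(\kappa) > \theta$, and $N$ containing all functions $\lambda \to [h(\kappa)]^{<\omega}$ for $\lambda < \kappa$. The strategy is to extract from $h$ an extender (or a directed system of ultrafilters) of length $h(\kappa)$ and form the ultrapower $j : V \to M = \Ult(V, E)$. The seed information needed is exactly: for each $a \in [h(\kappa)]^{<\omega}$, the ultrafilter $E_a = \{ X \subseteq [\kappa]^{|a|} : a \in h(X) \}$ on $[\kappa]^{|a|}$, where $h(X)$ makes sense because $X \in V_\alpha$ (as $\kappa < \alpha$, so $\Power([\kappa]^{|a|}) \subseteq V_\alpha$, using $\alpha > \kappa + \omega$). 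Then $j(\kappa) \geq h(\kappa) > \theta$ by the standard calculation that the seeds $a$ represent ordinals below $j(\kappa)$, and $\crit(j) = \kappa$. The closure $M^{{<}\kappa} \subseteq M$ is obtained from the hypothesis that $N$ has all functions $\lambda \to [h(\kappa)]^{<\omega}$: a sequence $\langle x_i : i < \lambda \rangle$ of elements of $M$, each $x_i = [a_i, f_i]_E$ with $a_i \in [h(\kappa)]^{<\omega}$ and $f_i : [\kappa]^{<\omega} \to V$, can be coded using the function $i \mapsto a_i$ (which $N$ has, hence which can be captured) together with a single function absorbing the $f_i$'s; the factor map from $\Ult(V, E_{\text{rel}})$ back in shows the sequence is an element of $M$. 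This is the standard argument (essentially as in Hamkins' Theorem 5.1 combined with the argument establishing Corollary~\ref{CharPushingUp1}, which is why the paper omits it).

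The main obstacle — and the only place care is genuinely required — is the closure computation in the converse direction: verifying that "$N$ has all functions $\lambda \to [h(\kappa)]^{<\omega}$ for every $\lambda < \kappa$" is precisely the right amount of information to push the closure of the ultrapower $M$ up to $M^{{<}\kappa} \subseteq M$, no more and no less. One must check that an arbitrary ${<}\kappa$-sequence of elements of $M$ is determined by (a) a ${<}\kappa$-sequence of seeds, which is a function $\lambda \to [h(\kappa)]^{<\omega}$ and hence available in $N$ (and thus representable, via $h$, back in $V$), and (b) the underlying functions, which can be amalgamated into a single function on $[\kappa]^{<\omega}$ living in $V$ since $\lambda < \kappa = \crit(j)$ and so $j$ moves the index set trivially. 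Combining these, the target sequence is $[b, g]_E$ for an appropriate seed $b$ and function $g$, hence in $M$. Everything else — that $E$ is a genuine $V$-extender with well-founded ultrapower (well-foundedness follows since $N$ is well-founded, being a set), that $\crit(j) = \kappa$, that $j(\kappa) > \theta$ — is routine extender bookkeeping, and the comparison with the proof of Corollary~\ref{CharPushingUp1} makes the omission in the paper fully justified.
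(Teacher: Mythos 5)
Your overall route is the same as the one the paper intends (it omits the proof, pointing to the argument for Corollary~\ref{CharPushingUp1}): restrict the global embedding for the forward direction, and for the converse derive the extender $E$ with seed set $[h(\kappa)]^{<\omega}$, form $j_E:V\to M_E$ with factor map $k:M_E\to N$, and reduce ${<}\kappa$-closure of $M_E$ to showing that the seed sequence $s=\langle a_i\mid i<\lambda\rangle$ is in $M_E$, amalgamating the functions $f_i$ via $j_E(\langle f_i\mid i<\lambda\rangle)$. Your forward direction and the routine facts ($\crit(j_E)=\kappa$, $j_E(\kappa)\geq h(\kappa)>\theta$, well-foundedness) are fine.

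The problem is that the one step you yourself identify as the crux is the step you do not actually prove, and the justification you give for it does not work as stated. From ``$s:\lambda\to[h(\kappa)]^{<\omega}$ is an element of $N$'' you cannot directly conclude $s\in M_E$: the factor map $k:M_E\to N$ need not be surjective, and ``representable, via $h$, back in $V$'' has no force here --- the issue is membership in $\ran(k)$, not in $V$ or $N$. What is needed (and what the paper's argument before Corollary~\ref{CharPushingUp1} supplies in the harder ``pushing up $\delta$'' setting) is the following chain: $\crit(k)\geq h(\kappa)$, so $h(\kappa)\subseteq\ran(k)$ and $k$ fixes every element of $[h(\kappa)]^{<\omega}$; the set $A=({}^\lambda([h(\kappa)]^{<\omega}))^N$ is definable in $N$ from $\lambda$ and $h(\kappa)$, both in $\ran(k)$, so $A\in\ran(k)$; since $\kappa$ carries a derived measure and is therefore measurable, $h(\kappa)$ is inaccessible in $N$, so $N$ computes $|A|=h(\kappa)$, and by elementarity $\ran(k)$ contains an enumeration of $A$ with domain $h(\kappa)$, whence $A\subseteq\ran(k)$; finally each $f\in A$ has domain $\lambda<\crit(k)$ and values fixed by $k$, so $k^{-1}(f)=f\in M_E$. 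Only now does the hypothesis that $N$ contains \emph{all} functions $\lambda\to[h(\kappa)]^{<\omega}$ (hence the particular $s$) yield $s\in M_E$. Note that the inaccessibility of $h(\kappa)$ in $N$ is exactly what replaces the case analysis on $(\gamma^\nu)^M$ in the paper's argument with $\gamma=\sup\{(\theta^\beta)^M\mid\beta<\eta\}$, and is the reason the paper calls this lemma ``easier.'' Without this paragraph your closure claim is unsupported, so as written the proposal has a genuine gap at its decisive step.
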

We can therefore witness tallness by embeddings between set-sized structures. We proceed to consider the relationship of $\SULS(\cL_{\eta,\eta})$ with tall cardinals.
\begin{theorem}\label{th:tallImpliesSULS}
If there is a  tall cardinal $\kappa \geq \eta$ with closure ${<}\eta$, then $\SULS(\cL_{\eta,\eta})$ exists and is at most $\kappa$. In particular, if $\kappa$ is tall, then $$\SULS(\cL_{\kappa,\kappa})=\ULS(\cL_{\kappa,\kappa})=\kappa.$$
\end{theorem}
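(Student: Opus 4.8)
The plan is to mimic the proof of Theorem~\ref{th:measImpliesSULSWF}, using a tallness embedding in place of a measurability embedding, and to check that the extra closure we now have on the target model is exactly enough to make it correct about the conjunctions and disjunctions of size ${<}\eta$ appearing in $\cL_{\eta,\eta}$-sentences. First I would let $\kappa \geq \eta$ be tall with closure ${<}\eta$, fix a $\tau$-structure $N$ with $|N| = \gamma \geq \kappa$, and fix a cardinal $\overline\gamma > \gamma$. Using tallness with closure ${<}\eta$, I obtain an elementary embedding $j : V \to M$ with $\crit(j) = \kappa$, $j(\kappa) > \overline\gamma^+$, and $M^{{<}\eta} \subseteq M$. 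As in Theorem~\ref{th:measImpliesSULSWF}, since the occurrence number of $\cL_{\eta,\eta}$ is at most $\eta \leq \kappa$, we have $j\image\tau \subseteq \tau$ for the relevant reduct (or we shrink to a small enough language first), and $\overline N := j\image N$ is, modulo the renaming taking $\tau$ to $j\image\tau$, a $\tau$-substructure of $j(N)$, with $j : N \cong \overline N$ as $\tau$-structures.

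The heart of the argument is to show $\overline N \prec_{\cL_{\eta,\eta}} j(N)$. I would do this by induction on the complexity of an $\cL_{\eta,\eta}(\tau)$-formula $\varphi$, showing that for every tuple $\overline a$ from $N$, $\overline N \models_{\cL_{\eta,\eta}} \varphi(j(\overline a))$ iff $j(N) \models_{\cL_{\eta,\eta}} \varphi(j(\overline a))$ (where on the left $\varphi$ is read via the renaming). The atomic and first-order quantifier cases are exactly as in Theorem~\ref{th:measImpliesSULSWF}. For a conjunction $\bigwedge_{i<\lambda}\varphi_i$ with $\lambda < \eta$: by elementarity, $M$ believes $j(N) \models_{\cL_{\eta,\eta}} \bigwedge_{i<j(\lambda)} j(\varphi_i)(j(\overline a))$ — but since $\lambda < \kappa = \crit(j)$, the index set is fixed and $j(\langle\varphi_i : i<\lambda\rangle) = \langle j(\varphi_i) : i<\lambda\rangle$, so $M$ believes $j(N) \models_{\cL_{\eta,\eta}} \bigwedge_{i<\lambda} (j\varphi)_i(j\overline a)$. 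The key point is that, because $M^{{<}\eta}\subseteq M$, the $<\eta$-sized sequence of formulas and the satisfaction facts for each conjunct are all in $M$, and first-order satisfaction (and, recursively, $\cL_{\eta,\eta}$-satisfaction of each individual conjunct of strictly smaller rank) is absolute between $M$ and $V$ by the induction hypothesis applied inside $M$; hence the conjunction genuinely holds in $V$ in $j(N)$. Disjunctions are dual. Finally, the size requirement is exactly as before: $|N| = \gamma \geq \kappa$ gives $M \models |j(N)| \geq j(\kappa) > \overline\gamma^+$, so $|j(N)| > \overline\gamma$ in $V$.

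The main obstacle, and the place requiring care, is the absoluteness bookkeeping in the infinitary induction step: one must make sure that "$M$ is correct about $\cL_{\eta,\eta}$-satisfaction" is not assumed circularly but is built up by induction on formula rank, with the closure $M^{{<}\eta}\subseteq M$ invoked precisely to guarantee that the witnessing data (the ${<}\eta$-sequences of subformulas, and in the quantifier-over-${<}\eta$-variables case the assignments themselves) live in $M$, so that $M$'s computation of the satisfaction relation on these formulas agrees with $V$'s. A subtlety worth flagging is that $\cL_{\eta,\eta}$ permits quantification over ${<}\eta$ variables simultaneously, so the "$\exists$" case also uses $M^{{<}\eta}\subseteq M$ to transfer witnessing assignments; but since the relevant tuples come from $j(N)\in M$ and $M$ is closed under ${<}\eta$-sequences, this goes through. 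Once $\overline N \prec_{\cL_{\eta,\eta}} j(N)$ is established, $\SULS(\cL_{\eta,\eta}) \leq \kappa$ follows, and specializing to $\eta = \kappa$ with $\kappa$ tall (so in particular tall with closure ${<}\kappa$) gives $\SULS(\cL_{\kappa,\kappa}) \leq \kappa$; combined with Proposition~\ref{prop:ulsLetaetageqeta} (which gives $\ULS(\cL_{\kappa,\kappa}) \geq \kappa$, hence $\SULS(\cL_{\kappa,\kappa}) \geq \kappa$ as well since $\SULS \geq \ULS$), we conclude $\SULS(\cL_{\kappa,\kappa}) = \ULS(\cL_{\kappa,\kappa}) = \kappa$.
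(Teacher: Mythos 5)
Your proposal is correct and follows essentially the same route as the paper's proof: take a tallness embedding $j:V\to M$ with $\crit(j)=\kappa$, $j(\kappa)>\overline\gamma^+$ and $M^{{<}\eta}\subseteq M$, show $\overline N=j\image N\prec_{\cL_{\eta,\eta}}j(N)$ using that $j$ fixes $\cL_{\eta,\eta}$-formulas and that the ${<}\eta$-closed transitive $M$ computes $\cL_{\eta,\eta}$-satisfaction correctly, and then read off the size of $j(N)$; the paper simply quotes this correctness of $M$ where you spell out the induction, and it closes with the same appeal to Proposition~\ref{prop:ulsLetaetageqeta} for the case $\eta=\kappa$.
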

\begin{proof}
Suppose that $N$ is a $\tau$-structure of size $\gamma\geq\kappa$. Let $\overline\gamma >\gamma$ be a cardinal. Let $j:V\to M$ be an elementary embedding with $\crit(j)=\kappa$, $j(\kappa)>\overline\gamma^+$ and $M^{{<}\eta}\subseteq M$. We now argue as in the proof of Theorem~\ref{th:measImpliesSULSWF}. Consider the $j(\tau)$-structure $j(N)\in M$. Since $j\image\tau\subseteq j(\tau)$, $j(N)$ is a $\tau$-structure modulo the renaming which takes $\tau$ to $j\image\tau$. The renaming gives rise to the associated bijection between formulas in $\tau$ and $j\image\tau$, so specifically, let it send $\varphi$ to $\overline\varphi$. Consider the $\tau$-substructure $\overline N=j\image N\subseteq j(N)$. The bijection $j$ witnesses that $N\cong \overline N$ as $\tau$-structures. Next, let's show that $\overline N\prec_{\cL_{\eta,\eta}} j(N)$. Suppose that $\overline N\models_{\cL_{\eta,\eta}}\varphi(j(a))$. Via the isomorphism, it follows that $N\models_{\cL_{\eta,\eta}}\varphi(a)$. But then by elementarity of $j$, $M$ satisfies that $j(N)\models_{\cL_{\eta,\eta}}\overline\varphi(j(a))$ because $\cL_{\eta,\eta}$-formulas are fixed by $j$ whose critical point is $\kappa\geq\eta$. Since $M$ is closed under ${<}\eta$-sequences, it is correct about $\cL_{\eta,\eta}$ satisfaction, and so it is actually the case that $j(N)\models_{\cL_{\eta,\eta}}\overline\varphi(j(a))$ in $j(\tau)$. Hence $j(N)\models_{\cL_{\eta,\eta}}\varphi(j(a))$ modulo the renaming. Finally, since $|N|\geq\gamma\geq\kappa$, $M$ satisfies that $|j(N)|\geq j(\kappa)>\overline\gamma^+$, but then in $V$, $|j(N)|\geq\overline\gamma^+>\overline\gamma$ as desired.

We just argued that $\SULS(\cL_{\eta,\eta})\leq\kappa$. By Proposition~\ref{prop:ulsLetaetageqeta}, $\ULS(\cL_{\eta,\eta})\geq\eta$. Thus, in particular, if $\kappa$ is tall, then $\SULS(\cL_{\kappa,\kappa})=\ULS(\cL_{\kappa,\kappa})=\kappa$.
\end{proof}
Recall that $\cL(Q^{WF})\subseteq \cL_{\omega_1,\omega_1}$. In particular, a ULST number of $\cL_{\omega_1, \omega_1}$ must be greater or equal than the least measurable cardinal. Since strongly compact cardinals are tall \cite[Theorem 2.11]{Hamkins:TallCardinals} and it is consistent that the least measurable is the least strongly compact, we get the following:
\begin{theorem}
It is consistent that the following are equivalent for a cardinal $\kappa$:
\begin{enumerate}
\item $\kappa$ is the least measurable cardinal.
\item $\kappa$ is the least strong compactness cardinal for $\cL(Q^{WF})$.
\item $\kappa$ is the least cardinal which is a strong compactness cardinal for $\cL_{\kappa,\kappa}$.
\item $\kappa=\SULS(\cL(Q^{WF}))$.
\item $\kappa=\SULS(\cL_{\eta,\eta})$ for all uncountable regular $\eta\leq\kappa$.
\item $\kappa=\ULS(\cL(Q^{WF}))$
\item $\kappa=\ULS(\cL_{\eta,\eta})$ for all uncountable regular $\eta\leq\kappa$.
\end{enumerate}
\end{theorem}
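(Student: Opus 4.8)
The plan is to assemble this consistency result from pieces already established in the excerpt, together with the cited forcing theorem of Magidor. The guiding observation is that conditions (1)--(7) are all provably (in $\ZFC$) related: by Corollary~\ref{cor:ULSWF} we already have (1)$\Leftrightarrow$(6)$\Leftrightarrow$(4); by Theorem~\ref{th:omega1StronglyCompactCompactnessCardinal} a strong compactness cardinal for $\cL(Q^{WF})$ is exactly an $\omega_1$-strongly compact cardinal, and the remark following that theorem notes $\ZFC$ proves an $\omega_1$-strongly compact cardinal has a measurable below it, so $(2)\Rightarrow(1)$ always; and by Theorem~\ref{th:tallImpliesSULS} a tall cardinal $\kappa$ satisfies $\SULS(\cL_{\eta,\eta})=\ULS(\cL_{\eta,\eta})=\kappa$ for uncountable regular $\eta\le\kappa$ (using that a tall cardinal has closure ${<}\kappa$, trivially, and Proposition~\ref{prop:tall}/Lemma~\ref{lem:setEmbeddingsTall} to handle the closure bookkeeping), while $\cL(Q^{WF})\subseteq\cL_{\omega_1,\omega_1}$ forces any ULST number of $\cL_{\omega_1,\omega_1}$ to be at least the least measurable, hence $\ge$ the least measurable in general. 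So the only thing that can fail outright in $\ZFC$ is the leastness alignment between the measurable and the strongly compact/$\omega_1$-strongly compact side and the $\SULS/\ULS$ side for the $\cL_{\eta,\eta}$'s; that is exactly what the consistency hypothesis will buy us.

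The concrete step is: start in a model of $\ZFC$ with a supercompact cardinal and pass to Magidor's model from \cite{Magidor:LeastMeasurableStronglyCompact}, in which the least measurable cardinal $\kappa$ is also the least strongly compact cardinal. First I would record that in this model $\kappa$ is in particular the least $\omega_1$-strongly compact cardinal (a strongly compact cardinal is $\omega_1$-strongly compact, and an $\omega_1$-strongly compact cardinal has a measurable at or below it, so the least one cannot lie strictly below $\kappa$); by Theorem~\ref{th:omega1StronglyCompactCompactnessCardinal} this gives (1)$\Leftrightarrow$(2). Next, since a strongly compact cardinal is tall (\cite[Theorem 2.11]{Hamkins:TallCardinals}), $\kappa$ is tall in this model, so Theorem~\ref{th:tallImpliesSULS} yields $\SULS(\cL_{\eta,\eta})=\ULS(\cL_{\eta,\eta})=\kappa$ for every uncountable regular $\eta\le\kappa$, i.e.\ (5) and (7) hold with value $\kappa$. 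Conversely, as noted, $\cL(Q^{WF})\subseteq\cL_{\omega_1,\omega_1}$ together with Theorem~\ref{th:ULSWF} (or Corollary~\ref{cor:ULSWF}) shows no cardinal below the least measurable can be $\ULS(\cL_{\omega_1,\omega_1})$ or $\SULS(\cL_{\omega_1,\omega_1})$, so if (5) or (7) holds at some $\kappa'$ then $\kappa'\ge\kappa$; and being tall-with-closure-${<}\eta$ is what drives Theorem~\ref{th:tallImpliesSULS}, which pins the value down to $\kappa$ exactly in this model. Finally (3) is handled the same way: Theorem~\ref{th:omega1StronglyCompactCompactnessCardinal} of Tarski cited in the ``Generalized compactness'' section says $\kappa$ is a strong compactness cardinal for $\cL_{\kappa,\kappa}$ iff $\kappa$ is strongly compact, so in Magidor's model the least such $\kappa$ is the least strongly compact, which is $\kappa$; and conversely a strong compactness cardinal for $\cL_{\kappa,\kappa}$ being strongly compact and strongly compact cardinals carrying measurables below them forces the least such to be $\ge$ the least measurable.

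Putting these equivalences together: in Magidor's model, each of (1)--(7) holds exactly of the single cardinal $\kappa$ (the common value of ``least measurable'', ``least strongly compact'', ``least $\omega_1$-strongly compact'', ``least $\SULS$'', ``least $\ULS$''), and for each the implications in both directions have been supplied above, so the seven statements are equivalent in this model, establishing the claimed consistency. The main obstacle I anticipate is not any deep new argument but the careful bookkeeping in items (5) and (7): one must verify the ``for all uncountable regular $\eta\le\kappa$'' clause uniformly, which requires that a tall $\kappa$ genuinely gives closure ${<}\eta$ embeddings for every such $\eta$ simultaneously --- this is immediate since a tall embedding has $M^{\kappa}\subseteq M$ and $\eta\le\kappa$ implies $M^{{<}\eta}\subseteq M$ --- and that the lower bound ``$\ge$ least measurable'' genuinely transfers from $\cL(Q^{WF})$ to $\cL_{\omega_1,\omega_1}$ via the inclusion of logics, which is exactly the observation made just before the theorem statement. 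None of these steps is technically hard, but the result is essentially a synthesis, so the work is in citing the right earlier item at each of the seven bi-implications.
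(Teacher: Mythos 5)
Your proposal is correct and is essentially the paper's own argument: work in Magidor's model where the least measurable is the least strongly compact, use that strongly compact cardinals are tall together with Theorem~\ref{th:tallImpliesSULS} for the upper bounds, the inclusion $\cL(Q^{WF})\subseteq\cL_{\eta,\eta}$ with Corollary~\ref{cor:ULSWF} for the lower bounds, and Magidor's and Tarski's characterizations for items (2) and (3). The only blemishes are cosmetic: the Tarski equivalence for $\cL_{\kappa,\kappa}$ is not Theorem~\ref{th:omega1StronglyCompactCompactnessCardinal}, and for $\eta<\kappa$ the lower bound should be quoted via $\cL(Q^{WF})\subseteq\cL_{\eta,\eta}$ for each uncountable regular $\eta$, not just $\eta=\omega_1$ (as you in effect do anyway).
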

\begin{theorem}\label{th:SULSInfinLogicItselfImpliesTall}
If $\SULS(\cL_{\kappa,\kappa})=\kappa$, then $\kappa$ is tall.
\end{theorem}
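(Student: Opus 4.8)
The plan is to run the same truth-predicate argument as in Theorem~\ref{th:ULSWF}, but now using that $\cL_{\kappa,\kappa}$ can express closure under $\alpha$-sequences for every $\alpha < \kappa$ over transitive models of set theory, and then to extract from the resulting elementary embedding between transitive set-sized models a tallness embedding via Lemma~\ref{lem:setEmbeddingsTall}. So suppose $\SULS(\cL_{\kappa,\kappa}) = \kappa$. To show $\kappa$ is tall it suffices, by Propositions~\ref{prop:tall} and~\ref{lem:setEmbeddingsTall}, to show that $\kappa$ is $\theta$-tall with closure ${<}\kappa$ for every $\theta \geq \kappa$, which in turn follows from producing, for each such $\theta$, an elementary embedding $h: V_\alpha \to N$ (some $\alpha > \kappa$, $N$ transitive) with $\crit(h) = \kappa$, $h(\kappa) > \theta$, and $N$ containing all functions $\lambda \to [h(\kappa)]^{<\omega}$ for every $\lambda < \kappa$.

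Fix $\theta \geq \kappa$ and pick a regular $\rho$ large enough that $V_{\theta+2} \in H_{\rho^+}$ (so that $H_{\rho^+}$ computes $\Power(\theta)$ and the relevant function spaces correctly). Consider the structure $\mathcal M = (H_{\rho^+}, \in, \kappa, \rho, \Tr)$ where $\Tr$ is a truth predicate for $(H_{\rho^+},\in)$, and let $\varphi$ be the $\cL_{\kappa,\kappa}$-sentence asserting: (1) I am well-founded; (2) $\kappa$ is a cardinal; (3) $\rho$ is the largest cardinal; (4) $\Tr$ is a truth predicate; and (5) for every $\lambda < \kappa$, I am closed under $\lambda$-sequences — this last being the conjunction over $\lambda < \kappa$ of the sentences $\psi_\lambda$, which is a legitimate $\cL_{\kappa,\kappa}$-sentence since there are only ${<}\kappa$ conjuncts. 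Since $|H_{\rho^+}| \geq \rho \geq \kappa \geq \kappa$, the $\SULS$ hypothesis gives an $\cL_{\kappa,\kappa}$-elementary superstructure $\mathcal N = (N, {\mathsf E}, \overline\kappa, \overline\rho, \overline\Tr)$ of size much larger than $\theta$ satisfying $\varphi$. By (1) and $\cL_{\kappa,\kappa} \supseteq \cL_{\omega_1,\omega_1}$ we may collapse and assume ${\mathsf E} = \in$ and $N$ transitive, obtaining an embedding $j: H_{\rho^+} \to N$ with $j(\kappa) = \overline\kappa$, $j(\rho) = \overline\rho$; the truth predicate makes $j$ elementary between $(H_{\rho^+},\in)$ and $(N,\in)$. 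As before, since $|N| \gg \rho$ and $\overline\rho$ is the largest cardinal of $N$, we get $\overline\rho > \rho$, hence $j$ has a critical point $\kappa_0 \leq \rho$; and since $N$ satisfies (5), $N$ is closed under $\lambda$-sequences for every $\lambda < \overline\kappa = j(\kappa)$ — in particular $N$ contains all the functions $\lambda \to [j(\kappa)]^{<\omega}$ for $\lambda < \kappa$.

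Two points then need care, and these are where the real work is. First, I must arrange $\crit(j) = \kappa$ exactly, not merely $\crit(j) \leq \rho$: this should follow as in the previous proofs from adding clauses pinning down enough structure below $\kappa$ — e.g.\ asserting in $\varphi$ that $\kappa$ is the least cardinal with some absolute property, or simply observing that the restriction $j \restriction V_{\kappa}$ must be the identity while $j(\kappa) \ne \kappa$; I expect this to go through but it is the most fiddly step, and may require that $\kappa$ itself already have some reflecting property (which one can extract, or which follows from $\SULS(\cL_{\kappa,\kappa}) = \kappa$ together with Proposition~\ref{prop:ulsLetaetageqeta} forcing $\kappa$ to be large). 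Second, I need $j(\kappa) > \theta$: since $N$ has size $\gg \theta$ and $\overline\rho = j(\rho)$ is its largest cardinal, $j(\rho) > \theta$; to push this down to $j(\kappa)$ one adds to $\varphi$ a clause forcing $\rho$ to be, say, the least cardinal above $|V_{\theta+1}|$-in-the-sense-of-the-model relative to some parameter coding $\theta$ in terms of $\kappa$ — but $\theta$ is not definable from $\kappa$ in general, so instead I would include $\theta$ (or $V_{\theta+1}$) as an additional predicate in $\mathcal M$, exactly mirroring the role of $\beta^*$ in the proof of Theorem~\ref{thm:ULSSecondOrderLogic}, and argue that $\overline\theta := j(\theta) \geq \theta$ while $j(\kappa) > \overline\theta \geq \theta$ by choosing the relationship between $\kappa$, $\theta$, $\rho$ so that the model's largest-cardinal constraint forces $\overline\kappa$ past $j(\theta)$. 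Finally, restricting $j$ to a suitable $V_\alpha \in H_{\rho^+}$ with $\alpha > \kappa$ and $\theta < \alpha < \rho$ yields $h = j \restriction V_\alpha : V_\alpha \to j(V_\alpha) \subseteq N$ with the required closure inherited from $N$, and Lemma~\ref{lem:setEmbeddingsTall} then gives $\theta$-tallness with closure ${<}\kappa$. Since $\theta$ was arbitrary, Proposition~\ref{prop:tall} finishes: $\kappa$ is tall.

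The main obstacle, as indicated, is the bookkeeping that simultaneously secures $\crit(j) = \kappa$ and $j(\kappa) > \theta$ from a single application of $\SULS$ — i.e.\ choosing the parameters and the largest-cardinal clause of $\varphi$ so that the unique collapse map is forced to move $\kappa$ and to fix everything below it; everything else is a routine adaptation of the arguments already given for $\cL(Q^{WF})$ and $\cL^2$.
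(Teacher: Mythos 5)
Your overall strategy (collapse a well-founded $\cL_{\kappa,\kappa}$-elementary extension, use the sentences $\psi_\lambda$ for closure, and finish via Lemma~\ref{lem:setEmbeddingsTall} and Proposition~\ref{prop:tall}) is the right one, but the two points you yourself flag as "where the real work is" are exactly where the argument breaks, and the patches you sketch do not repair it. By taking the base structure to be $(H_{\rho^+},\in,\kappa,\rho,\Tr)$ with $\rho$ large relative to $\theta$ and with "$\rho$ is the largest cardinal" in your sentence, the $\SULS$ extension only forces $j(\rho)$ to be large: the collapsed embedding $j$ may perfectly well fix $\kappa$ (its critical point could lie anywhere in the interval $(\kappa,\rho]$, e.g.\ at some measurable above $\kappa$), so neither $\crit(j)=\kappa$ nor $j(\kappa)>\theta$ follows. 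Your proposed fix of importing $\theta$ or $V_{\theta+1}$ as a parameter "exactly mirroring the role of $\beta^*$" in Theorem~\ref{thm:ULSSecondOrderLogic} does not transfer: that argument forces the critical point down by using second-order logic to pin down the \emph{true} $V_{\beta^*+1}$ inside the extension, and $\cL_{\kappa,\kappa}$ has no such device — it cannot identify the real power set of $\theta$, so nothing prevents an extension that moves only ordinals far above $\kappa$.

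The paper resolves this with a different choice of structure that you should compare with yours: apply $\SULS(\cL_{\kappa,\kappa})=\kappa$ to the \emph{small} structure $\mathcal M=(H_{\kappa^+},\in,\kappa)$, in which $\kappa$ itself is the largest cardinal, and request an $\cL_{\kappa,\kappa}$-elementary extension $\mathcal N$ of size larger than the least $\beth$-fixed point $\rho>\theta$. After collapsing, the transitive $N$ has size $>\beth_\rho=\rho$, hence $\rho\in N$; since $j(\kappa)=\overline\kappa$ is the largest cardinal of $N$, this forces $j(\kappa)\geq\rho>\theta$ — the largeness of the target is converted into largeness of $j(\kappa)$ precisely because $\kappa$, not some auxiliary $\rho$, is the top cardinal of the base structure. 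The critical point is then exactly $\kappa$ because every ordinal $\xi<\kappa$ is $\cL_{\kappa,\kappa}$-definable (via the formulas $\varphi^\xi_\psi$) and hence fixed by the $\cL_{\kappa,\kappa}$-elementary $j$, while $j(\kappa)>\kappa$; no truth predicate or extra clauses are needed, since $\cL_{\kappa,\kappa}$-elementarity between the transitive models already gives first-order elementarity. The closure claim in your write-up also needs a correction: the conjunction $\bigwedge_{\lambda<\kappa}\psi_\lambda$ transfers to give $N^{{<}\kappa}\subseteq N$, not closure under $\lambda$-sequences for all $\lambda<j(\kappa)$; this weaker closure suffices, since $N$ then correctly believes that $V^N_{j(\kappa)+1}$ contains all functions $\lambda\to[j(\kappa)]^{<\omega}$ for $\lambda<\kappa$, and restricting $j$ to $V_{\kappa+1}\to V^N_{j(\kappa)+1}$ feeds Lemma~\ref{lem:setEmbeddingsTall}. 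As written, your proof has a genuine gap at the central step, and the missing idea is this choice of $(H_{\kappa^+},\in,\kappa)$ together with the $\beth$-fixed-point bookkeeping.
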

\begin{proof}
By Proposition~\ref{prop:tall}, it suffices to show that $\kappa$ is tall with closure ${<}\kappa$. We will show that for every cardinal $\theta>\kappa$, there is an elementary embedding \hbox{$j_\theta:V_{\kappa+1}\to N_\theta$} with $\crit(j_\theta)=\kappa$, $j_\theta(\kappa)>\theta$ and such that $N_\theta$ has all functions $\lambda \rightarrow [j(\kappa)]^{<\omega}$ for all $\lambda < \kappa$. This suffices by Lemma~\ref{lem:setEmbeddingsTall}. Consider the structure $$\mathcal M=(H_{\kappa^+},\in,\kappa).$$ Because $\cL_{\kappa,\kappa}$ can define well-foundedness, using an argument analogous to the proof of Theorem~\ref{th:ULSWF}, we can see that there is an $\cL_{\kappa,\kappa}$-elementary embedding $h:H_{\kappa^+}\to N$ with $\crit(h)\leq\kappa$. But since every $\xi<\kappa$ is definable in $\cL_{\kappa,\kappa}$, it follows that $\crit(h)=\kappa$. Thus, $\kappa$ is at least measurable, and in particular, $V_{\kappa +1 } \subseteq H_{\kappa^+}$. Note that $\mathcal M\models_{\cL_{\kappa,\kappa}}\psi_\alpha$ (witnessing that $H_{\kappa^+}^{\alpha}\subseteq H_{\kappa^+}$) for every $\alpha<\kappa$. Fix $\theta>\kappa$. Since $\kappa=\SULS(\cL_{\kappa,\kappa})$, there is a model $\mathcal N=(N,\mathsf E,\overline\kappa)$ of size larger than the smallest $\beth$-fixed point $\beth_\rho = \rho > \theta$ and such that $\mathcal M\prec_{\cL_{\kappa,\kappa}}\mathcal N$. It follows that $\mathsf E$ is well-founded, so we can assume, by collapsing, that ${\mathsf E}=\in$, $N$ is transitive and $$j:M \to N$$ is an $\cL_{\kappa,\kappa}$-elementary embedding. Then $\rho \in N$ and since $\bar\kappa$ is the largest cardinal of $N$, we get $\kappa<\theta< \rho \leq \bar\kappa=j(\kappa)$. Since the embedding $j$ is $\cL_{\kappa,\kappa}$-elementary, and every ordinal $\xi<\kappa$ is $\cL_{\kappa,\kappa}$-definable, it follows that $\crit(j)=\kappa$. We also have, for every $\alpha<\kappa$, $N\models_{\cL_{\kappa,\kappa}}\psi_{\alpha}$, and so   $N^{{<}\kappa}\subseteq N$. Further, $N$ believes that $V_{j(\kappa) + 1}^N$ has all functions $\lambda \rightarrow [j(\kappa)]^{< \omega}$ and, by its own closure, it is correct about this. Thus, the restriction $j:V_{\kappa+1}\to V_{j(\kappa) +1}^N$ has all the required properties.

\end{proof}
\begin{corollary}
A cardinal $\kappa$ is tall if and only if $\ULS(\cL_{\kappa,\kappa})=\SULS(\cL_{\kappa,\kappa})=\kappa$.
\end{corollary}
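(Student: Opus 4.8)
The plan is to obtain this corollary by simply combining the two main theorems of this section, so there is essentially no new content to prove. For the forward direction, suppose $\kappa$ is tall. Then $\kappa$ is in particular tall with closure ${<}\kappa$, since a tallness embedding $j:V\to M$ satisfies $M^\kappa\subseteq M$ and hence $M^{{<}\kappa}\subseteq M$. Applying Theorem~\ref{th:tallImpliesSULS} with $\eta=\kappa$ (so that $\kappa\geq\eta$ and $\kappa$ has closure ${<}\eta$) gives $\SULS(\cL_{\kappa,\kappa})\leq\kappa$. Together with the trivial inequality $\ULS(\cL_{\kappa,\kappa})\leq\SULS(\cL_{\kappa,\kappa})$ — which holds whenever the strong number exists, because an $\cL_{\kappa,\kappa}$-elementary substructure is in particular a substructure satisfying the same sentences — and with the lower bound $\ULS(\cL_{\kappa,\kappa})\geq\kappa$ from Proposition~\ref{prop:ulsLetaetageqeta}, we obtain $\ULS(\cL_{\kappa,\kappa})=\SULS(\cL_{\kappa,\kappa})=\kappa$. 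In fact this is already the ``in particular'' clause recorded in the statement of Theorem~\ref{th:tallImpliesSULS}, so this half can simply be cited.

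For the converse, assume $\ULS(\cL_{\kappa,\kappa})=\SULS(\cL_{\kappa,\kappa})=\kappa$. In particular $\SULS(\cL_{\kappa,\kappa})=\kappa$, so Theorem~\ref{th:SULSInfinLogicItselfImpliesTall} applies directly and yields that $\kappa$ is tall. (Note that only the value of $\SULS(\cL_{\kappa,\kappa})$ is used here; the hypothesis on $\ULS(\cL_{\kappa,\kappa})$ is not needed for this implication.)

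Since both directions reduce verbatim to results already established earlier in the section, I do not expect any genuine obstacle. The only points deserving a word of care are: first, that tallness of $\kappa$ supplies exactly the closure assumption ($M^{{<}\eta}\subseteq M$ with $\eta=\kappa$) needed to invoke Theorem~\ref{th:tallImpliesSULS}; and second, that for the converse the single equality $\SULS(\cL_{\kappa,\kappa})=\kappa$ already suffices, so the statement is in some sense slightly redundant but is phrased as a clean biconditional for emphasis.
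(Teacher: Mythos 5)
Your proposal is correct and follows exactly the route the paper intends: the forward direction is precisely the ``in particular'' clause of Theorem~\ref{th:tallImpliesSULS} (tall implies tall with closure ${<}\kappa$, plus the lower bound from Proposition~\ref{prop:ulsLetaetageqeta}), and the converse is a direct application of Theorem~\ref{th:SULSInfinLogicItselfImpliesTall}. The paper states the corollary without proof for the same reason, so nothing is missing.
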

Since consistency-wise strongly compact cardinals are much stronger than tall cardinals (which are equiconsistent with strong cardinals (see \cite{Hamkins:TallCardinals})), it is consistent to have a tall cardinal that is not strongly compact.
\begin{corollary}
It is consistent that $\ULS(\cL_{\kappa,\kappa})=\SULS(\cL_{\kappa,\kappa})=\kappa$, but $\kappa$ is not a strong compactness cardinal for $\cL_{\kappa,\kappa}$.
\end{corollary}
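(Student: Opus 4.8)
The plan is to combine the characterization of tallness just obtained with Tarski's classical theorem that $\kappa$ is a strong compactness cardinal for $\cL_{\kappa,\kappa}$ precisely when $\kappa$ is strongly compact. By Theorem~\ref{th:tallImpliesSULS} and Theorem~\ref{th:SULSInfinLogicItselfImpliesTall}, the statement $\ULS(\cL_{\kappa,\kappa})=\SULS(\cL_{\kappa,\kappa})=\kappa$ is equivalent to $\kappa$ being tall. So it suffices to produce a model of $\ZFC$ in which some cardinal is tall but not strongly compact.

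To get such a model I would exploit the consistency-strength gap between the two notions. By Hamkins \cite{Hamkins:TallCardinals}, tall cardinals are equiconsistent with strong cardinals and every strong cardinal is tall, whereas strong compactness has strictly greater consistency strength. Concretely, work in a model $V$ containing a strong cardinal $\kappa$ but no strongly compact cardinal at all, for instance the minimal fine-structural inner model for a strong cardinal, in which $\GCH$ holds and, by core-model considerations, a strongly compact cardinal cannot exist. In $V$, $\kappa$ is strong, hence tall, so Theorem~\ref{th:tallImpliesSULS} gives $\SULS(\cL_{\kappa,\kappa})=\ULS(\cL_{\kappa,\kappa})=\kappa$; but $\kappa$ is not strongly compact, so by Tarski's theorem $\kappa$ is not a strong compactness cardinal for $\cL_{\kappa,\kappa}$, as required. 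Alternatively, one may force over any model with a strong cardinal, using Hamkins's lottery/fast-function techniques for manipulating tall cardinals, to obtain a model in which $\kappa$ remains tall while explicitly failing to be strongly compact.

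The only point that needs care is justifying that the chosen model genuinely has a tall cardinal while lacking a strongly compact one. The cleanest route is to appeal to standard inner-model machinery: a strongly compact cardinal would outstrip the consistency strength of a strong cardinal, so none can exist in a minimal model built from a single strong cardinal, and strongness is preserved under passage to such a model by the usual covering and comparison arguments, so the tall cardinal is indeed present. If one prefers a self-contained statement, citing Hamkins's explicit separation of tallness from strong compactness in \cite{Hamkins:TallCardinals} discharges this point directly.
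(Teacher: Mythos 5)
Your proposal is correct and follows essentially the same route as the paper: combine the equivalence ``$\kappa$ tall iff $\ULS(\cL_{\kappa,\kappa})=\SULS(\cL_{\kappa,\kappa})=\kappa$'' with Tarski's theorem, and then use the consistency-strength gap (tall cardinals are equiconsistent with strong cardinals, while strongly compact cardinals are consistency-wise much stronger) to get a model with a tall cardinal that is not strongly compact. The paper leaves the separation at the level of this consistency-strength remark, whereas you additionally sketch a concrete realization (a fine-structural model of a strong cardinal, or a forcing argument), which is fine but not needed.
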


Next, we introduce a version of tall cardinals $\kappa$, where the defining embeddings, instead of mapping $\kappa$ as high as desired, map some fixed ordinal $\delta\geq\kappa$ as high as desired.
\begin{definition}
A cardinal $\kappa\leq\delta$ is \emph{$\theta$-tall pushing up $\delta$ with closure $\lambda\leq\kappa$} if there is an elementary embedding $j:V\to M$ with $\crit(j)=\kappa$, $M^{\lambda}\subseteq M$, and $j(\delta)>\theta$. A cardinal $\kappa\leq\delta$ is \emph{tall pushing up $\delta$ with closure $\lambda$} if it is $\theta$-tall pushing up $\delta$ with closure $\lambda$ for all $\theta>\kappa$. If the target model $M$ has closure $M^{{<}\lambda}\subseteq M$, then we say that $\kappa$ is ($\theta$-)\emph{tall pushing up $\delta$ with closure ${<}\lambda$}.
\end{definition}

Observe that a $\theta$-tall cardinal $\kappa$ with closure $\lambda$ is $\theta$-tall pushing up $\kappa$ with closure $\lambda$. But in our more general definition it might be a larger ordinal than $\kappa$ that gets mapped beyond $\theta$.

We would like to thank Joel David Hamkins for pointing out the following result separating tall cardinals from tall cardinal pushing up some $\delta$.
\begin{proposition}[Hamkins]
It is consistent that there is a cardinal $\kappa$ which is not tall, but for which there is an ordinal $\delta>\kappa$ such that $\kappa$ is tall pushing up $\delta$.
\end{proposition}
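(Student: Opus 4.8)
The plan is to split the claim into a positive principle and a consistency input. First I would establish the following lemma: \emph{if $\delta$ is a strong cardinal and $\kappa<\delta$ is measurable, then $\kappa$ is tall pushing up $\delta$ with closure $\kappa$} (and hence with any closure $\lambda\leq\kappa$). Granting this, it suffices to exhibit a model of $\ZFC$ containing a strong cardinal $\delta$ together with a measurable cardinal $\kappa<\delta$ that is \emph{not} tall; in such a model the lemma makes $\kappa$ tall pushing up $\delta$, while $\kappa$ fails to be tall, which is precisely the consistency statement. There is no tension here: ``$\kappa$ is tall'' is by definition ``$\kappa$ is tall pushing up $\kappa$'', so for $\delta>\kappa$ the property ``$\kappa$ is tall pushing up $\delta$'' is genuinely weaker.

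To prove the lemma, fix $\theta>\kappa$. Using the strength of $\delta$, pick an elementary embedding $k\colon V\to N$ which is an extender ultrapower with $\crit(k)=\delta$, $k(\delta)>\theta$, and $N$ closed under ${<}\delta$-sequences (all standard: an extender ultrapower with critical point $\delta$ is automatically ${<}\delta$-closed, and $k(\delta)>\theta$ can always be arranged for a strong cardinal). Let $U$ be a normal measure on $\kappa$. Since $U\in V_{\kappa+2}\subseteq V_\delta$ and $\crit(k)=\delta$, we have $k\restriction V_\delta=\mathrm{id}$, so $k(U)=U$; and since $N$ is ${<}\delta$-closed with $\kappa<\delta$ we get $\mathcal P(\kappa)^N=\mathcal P(\kappa)^V$, so $N$ correctly regards $U$ as a normal measure on $\kappa$. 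Let $i\colon N\to N'$ be the ultrapower of $N$ by $U$, so $\crit(i)=\kappa$ and $i(k(\delta))\geq k(\delta)>\theta$. Moreover $(N')^\kappa\subseteq N'$ holds in $V$: given $s\colon\kappa\to N'$ in $V$, write $s(\beta)=i(f_\beta)(\kappa)$ with each $f_\beta\in N$; the sequence $\langle f_\beta\mid\beta<\kappa\rangle$ is a $\kappa$-sequence of elements of $N$, hence lies in $N$ by ${<}\delta$-closure, and then the usual argument recovers $s$ inside $N'$ from the $i$-image of this sequence. Now $j:=i\circ k\colon V\to N'$ has $\crit(j)=\kappa$ (as $k$ fixes everything below $\delta$ and $\crit(i)=\kappa$), $(N')^\kappa\subseteq N'$, and $j(\delta)=i(k(\delta))>\theta$. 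Since $\theta$ was arbitrary, $\kappa$ is tall pushing up $\delta$ with closure $\kappa$.

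The main obstacle is the second ingredient: producing a model with a strong cardinal $\delta$ and a measurable $\kappa<\delta$ that is not tall. One genuinely needs $\delta$ to be strong, or at least tall: if $\delta$ were merely measurable, every elementary embedding with critical point $\kappa$ and infinite closure on its target moves $\delta$ only boundedly (iterating a measure destroys $\omega$-closure of the target), so $\kappa$ could not be tall pushing up $\delta$; and we want $\delta$ to be a limit of measurables so that such a $\kappa<\delta$ exists at all, which strong cardinals are. For this I would pass to the minimal inner model $L[E]$ containing a strong cardinal $\delta$. There $\delta$ is a limit of measurables, so there is a least measurable $\kappa<\delta$; and $\kappa$ is not tall in $L[E]$, since the extender sequence $E$ carries extenders of unbounded length only with critical point $\delta$ and only boundedly long extenders with critical point $\kappa$, so every elementary embedding of $L[E]$ with critical point $\kappa$ and sufficient closure has $j(\kappa)$ bounded --- a witness to $\theta$-tallness of $\kappa$ for $\theta\geq\delta$ would reflect large-cardinal structure at $\kappa$ contradicting the minimality of $L[E]$. (Alternatively, start with a strong cardinal $\delta$ above a measurable $\kappa$, and if $\kappa$ happens to be tall, pass to an inner model --- e.g.\ the core model below $\delta$ --- capping its tallness while keeping $\kappa$ measurable and $\delta$ strong.) Pinning down the non-tallness of $\kappa$ precisely is the one point that requires genuine fine-structural input; everything else is the composition-of-embeddings argument above. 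Combining the lemma with this model yields the proposition.
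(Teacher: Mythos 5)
Your proposal is essentially the paper's argument: the paper also reduces the statement to a composition-of-embeddings lemma, starting from a model in which $\kappa$ is measurable but not tall and some $\delta>\kappa$ is tall, and then composes the ultrapower $h\colon V\to N$ by a measure on $\kappa$ with (the restriction to $N$ of) a $\theta$-tallness embedding $j\colon V\to M$ for $\delta$, checking exactly as you do that the composite has critical point $\kappa$, sends $\delta$ above $\theta$, and has a $\kappa$-closed target; your version merely composes in the opposite order (strongness embedding first, then the ultrapower by $U=k(U)$ computed over $N$), which is immaterial. Two small corrections to your write-up: an extender ultrapower with critical point $\delta$ is \emph{not} automatically ${<}\delta$-closed (long extenders of small cofinality length give non-closed targets); what you should invoke is the standard fact -- Hamkins' theorem that every strong cardinal is tall, already used elsewhere in the paper -- that the witnessing embedding for $\delta$ can be chosen with a $\delta$-closed (hence $\kappa$-closed) target, which is all your closure computation needs. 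As for the base model, the paper simply assumes the consistency of a non-tall measurable below a tall cardinal, whereas you try to justify it via the minimal extender model; your stated reason (``contradicting the minimality of $L[E]$'') is not quite the right one -- the correct justification is the core-model lower bound behind the tall/strong equiconsistency, namely that a tall cardinal is strong in $K$, so in the minimal model for a strong cardinal the least measurable cannot be tall -- but since you flag this as the fine-structural input and the paper does not attempt it at all, this does not affect the comparison.
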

\begin{proof}
Suppose we have a model in which $\kappa$ is measurable but not tall and $\delta>\kappa$ is tall. Let's argue that $\kappa$ is tall pushing up $\delta$. Fix an ordinal $\theta$ and let $j:V\to M$ be an elementary embedding with $M^\delta\subseteq M$, $\crit(j)=\delta$, and $j(\delta)>\theta$, witnessing the $\theta$-tallness of $\delta$. Let $h:V\to N$ be the ultrapower embedding by a $\kappa$-complete ultrafilter on $\kappa$, so that we have $\crit(h)=\kappa$ and $N^\kappa\subseteq N$. Let $j:N\to \overline N$ be the restriction of $j$ to $N$. Since $N^{\kappa}\subseteq N$, by elementarity, we get that $\overline N^{j(\kappa)}\subseteq \overline N$ in $M$. By the closure of $M$, we get that $\overline N^\kappa\subseteq \overline N$. The composition $j\circ h:V\to \overline N$ now witnesses that $\kappa$ is $\theta$-tall pushing up $\delta$.
\end{proof}
\begin{question}
Is the existence of a tall cardinal $\kappa$ pushing up some $\delta>\kappa$ equiconsistent with a tall cardinal?
\end{question}
We want to show that $\kappa$ being tall pushing up $\delta$ with closure ${<}\eta$ is witnessed by extenders and, thus, by set-sized embeddings. We assume that $\delta$ is a strong limit cardinal, $\eta\leq\kappa$ is regular, and $\theta^\omega > \theta$ (e.g., $\text{cof}(\theta)=\omega$). Suppose we have an embedding $j : V \rightarrow M$ with $\text{crit}(j) = \kappa$, $j(\delta) > \theta$ and such that $M$ is ${<}\eta$-closed witnessing that $\kappa$ is $\theta$-tall pushing up $\delta$ with closure ${<}\eta$. By the closure of $M$, for any $\beta < \eta$ we get that ${}^\beta \theta = ({}^\beta \theta)^M$ and thus $\theta^\beta \leq (\theta^\beta)^M$. Further, because $M$ believes that $j(\delta)$ is a strong limit cardinal and $\beta < \theta < j(\delta)$, we have that $\theta^\beta \leq (\theta^\beta)^M < j(\delta)$. Let $\gamma = \text{sup}\{(\theta^\beta)^M \colon \beta < \eta \}$. Notice that by our remarks we have that $\gamma \leq j(\delta)$ and, thus, it makes sense to derive an extender $E$ from $j$ by letting for $a \in [\gamma]^{< \omega}$ and $X \subseteq [\delta]^{|a|}$:
\[
X \in E_a \text{ iff } a \in j(X).
\]
Let $M_a$ be the ultrapower of $V$ by $E_a$ and let $M_E$ be the direct limit of the $M_a$. Standard results (see \cite[26.1 and 26.2]{kanamori:higher}) imply that there are canonical embeddings $j_E: V \rightarrow M_E$ and $k: M_E \rightarrow M$ such that $j = k \circ j_E$ and with $\text{crit}(j_E) = \kappa$, $j_E(\delta) \geq \gamma$ and $\text{crit}(k) \geq \gamma$, where $k$ is the inverse transitive collapse. Further
\[
M_E = \{j_E(f)(a) \mid a \in [\gamma]^{< \omega}, f:[\delta]^{|a|} \rightarrow V \}
\]
and
\[
\text{ran}(k) = \{j(f)(a) \mid a \in [\gamma]^{< \omega}, f: [\delta]^{|a|} \rightarrow V \}.
\]
Notice that $\text{ran}(k)$ is an elementary substructure of $M$. We want to see that $j_E:V \rightarrow M_E$ witnesses that $\kappa$ is $\theta$-tall pushing up $\delta$ with closure ${<}\eta$. It remains to check the closure of $M_E$. So let $\nu < \eta$ and fix $$\{j_E(f_\alpha)(a_\alpha)\mid \alpha < \nu\} \subseteq M_E$$  with each $a_\alpha \in [\gamma]^{< \omega}$ and $f_\alpha: [\delta]^{|a_\alpha|} \rightarrow V$. We have that $$j(\{f_\alpha\mid\alpha<\nu\})=\{j(f_\alpha)\mid\alpha<\nu\}\in M_E.$$ So if we can show that $\{a_\alpha\mid \alpha < \nu\} \in M_E$, we are done as then the pointwise evaluation of $\{j(f_\alpha)\mid \alpha < \nu\}$ will also be in $M_E$. Because $$\text{crit}(k) \geq \gamma \geq (\theta^\nu)^M \geq \theta^\nu$$ we know that $\gamma \subseteq \text{ran} (k)$ and as we assumed $\theta^\nu \geq \theta^\omega > \theta > \eta > \nu$, we have $\theta, \eta, \nu \in \text{ran}(k)$. Because $M$ is closed under $\nu$-sequences, ${}^\nu([\gamma]^{< \omega}) \subseteq M$, and so $M$'s version of ${}^\nu([\gamma]^{< \omega})$ is the real ${}^\nu([\gamma]^{< \omega})$. Now notice that $\gamma$ is definable from $\eta$ and $\theta$ in $M$ and, thus, by elementarity, $\gamma$ must be in $\text{ran}(k)$. Similarly, ${}^\nu([\gamma]^{< \omega})$ is definable from $\nu$ and $\gamma$ and so again by elementarity we get that ${}^\nu([\gamma]^{< \omega})$ has to be in $\text{ran}(k)$ as well. Further, $M$ believes that there is an enumeration $g$ of ${}^\nu([\gamma]^{< \omega})$ and g has domain $(\gamma^\nu)^M$. Thus, $\text{ran}(k)$ also has such an enumeration, say $h$. We claim that $(\gamma^\nu)^M = \gamma$. Notice that by definition of $\gamma$ and regularity of $\eta$, we either have that $\gamma=(\theta^\beta)^M$ for some $\beta<\eta$ or $\text{cof}(\gamma)=\eta$.  In the first case, in $M$, we have $\gamma^\nu=(\theta^\beta)^\nu=\theta^\beta=\gamma$. So let's assume that $\text{cof}(\gamma)=\eta$. Thus, because $\nu < \eta$, if $f: \nu \rightarrow \gamma$ is a function in $M$, then $\text{ran}(f) \subseteq (\theta^\beta)^M$ for some $\beta < \eta$, and we have $$(\gamma^\nu)^M \leq \sup \{ ((\theta^\beta)^\nu)^M \mid \beta < \eta \} = \text{sup}\{(\theta^\beta)^M \mid \beta < \eta \} = \gamma.$$ Thus $(\gamma^\nu)^M = \gamma$. Therefore $h$ is an enumeration of ${}^\nu ([\gamma]^{< \omega})$ with domain $\gamma \subseteq \text{ran}(k)$ and we get that the evaluation $h(\alpha)$ at any $\alpha < \gamma \subseteq \text{ran}(k)$ is in $\text{ran}(k)$, and so ${}^\nu([\gamma]^{< \omega}) \subseteq \text{ran}(k)$. In particular, $\{a_\alpha\mid \alpha < \nu\} \in \text{ran}(k)$. Fix $f\in {}^\nu([\gamma]^{< \omega}) \subseteq \text{ran}(k)$. Our argument above also shows that $f:\nu\to[\theta^\beta]^{<\omega}$ for some $\beta<\eta$. Since $k$ fixes $\beta$ and $\theta$, it follows that $k(\theta^\beta)=\theta^\beta$. If $\theta^\beta<\gamma$, then $k(f)=f$ and if $\theta^\beta=\gamma$, then $\crit(k)>\gamma$, and so also, $k(f)=f$.  It follows that $$\{a_\alpha\mid \alpha < \nu\} = k^{-1}(\{a_\alpha\mid \alpha < \nu\}) \in M_E.$$

Notice that in the above argument, the fact that $V_{j(\delta)+1}^M$ has all functions\break $f:\nu \rightarrow [\gamma]^{< \omega}$ is sufficient to show that $M_E$ is ${<}\eta$-closed. Thus we can conclude that $\theta$-tallness pushing up $\delta$ with closure ${<}\eta$ is, under the above conditions, already witnessed by set sized embeddings:
\begin{lemma}
Suppose that $\delta$ is a strong limit cardinal, $\eta\leq\kappa$ is regular and $\theta>\delta$ is such that $\theta^\omega > \theta$. Assume that for some $\alpha > \delta$ there is an elementary embedding $j: V_{\alpha} \rightarrow N$ with $\text{crit}(j) = \kappa$ and $j(\delta) > \theta$. Further let $\gamma = \text{sup}\{(\theta^\beta)^N \mid \beta < \eta \}$ and assume that $N$ has, for all $\nu < \eta$, all functions $\nu \rightarrow [\gamma]^{< \omega}$. Let $E$ be the extender with seed set $[\gamma]^{< \omega}$ derived from $j$ consisting of ultrafilters on $[\delta]^n$. Then the canonical embedding $j_E: V \rightarrow M_E$ into the extender power of $V$ by $E$ witnesses that $\kappa$ is $\theta$-tall pushing up $\delta$ with closure ${<}\eta$.
\end{lemma}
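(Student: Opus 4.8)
The plan is to carry out, for the set embedding $j : V_\alpha \to N$, exactly the argument given in the paragraph preceding the statement, which already proves the same conclusion for a \emph{class} embedding $j : V \to M$; the remark closing that discussion identifies that the only property of $M$ actually used is the availability of the functions $\nu \to [\gamma]^{<\omega}$, which is now a direct hypothesis on $N$. First I would observe that the extender is well defined: since $\delta < \alpha$ every $X \subseteq [\delta]^{|a|}$ lies in $V_\alpha$, so $E_a = \{X : a \in j(X)\}$ makes sense for $a \in [\gamma]^{<\omega}$, and as $\crit(j) = \kappa$ each $E_a$ is $\kappa$-complete (a ${<}\kappa$-sequence of members of $E_a$ is fixed by $j$ and so pulls through). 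Thus $M_E = \Ult(V,E)$, the direct limit of the $\Ult(V,E_a)$, is a well-defined inner model with $j_E : V \to M_E$ and $\crit(j_E) = \kappa$, and by \cite[26.1, 26.2]{kanamori:higher} there is a factor $k : M_E \to N$ with $j = k \circ j_E$, $j_E(\delta) \geq \gamma$, $\crit(k) \geq \gamma$, and $\ran(k) = \{ j(f)(a) : a \in [\gamma]^{<\omega},\ f : [\delta]^{|a|} \to V \} \prec N$, using the standard bounded-range normal form for the representing functions.

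Next I would dispose of the easy requirements. One has $\gamma > \theta$: otherwise $\gamma = \theta$, but then $N$ contains every function $\omega \to [\gamma]^{<\omega} \supseteq {}^{\omega}\theta$ (here using $\omega < \eta$), so $(\theta^\omega)^N = \theta^\omega \leq \gamma = \theta$, contradicting $\theta^\omega > \theta$. Hence $\crit(j_E) = \kappa$ and $j_E(\delta) \geq \gamma > \theta$, so it remains only to verify that $M_E$ is ${<}\eta$-closed.

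For the closure, fix $\nu < \eta$ and a sequence $\langle j_E(f_\xi)(a_\xi) : \xi < \nu \rangle$ with each $a_\xi \in [\gamma]^{<\omega}$ and $f_\xi : [\delta]^{|a_\xi|} \to V$ taken with bounded range, so that $\langle f_\xi : \xi < \nu \rangle$ is a set. Since $\nu < \eta \leq \kappa = \crit(j_E)$, applying $j_E$ to this set gives $\langle j_E(f_\xi) : \xi < \nu \rangle \in M_E$, so it suffices to show $\langle a_\xi : \xi < \nu \rangle \in M_E$, as then the pointwise evaluation is definable in $M_E$ from these two objects. For this I would repeat the computation of the preceding paragraph with $N$ in place of $M$: $\crit(k) \geq \gamma$ gives $\gamma \subseteq \ran(k)$ and $k$ fixes $\theta,\eta,\nu$ and every $\beta < \eta$; the hypothesis that $N$ has all functions $\nu \to [\gamma]^{<\omega}$ makes ${}^{\nu}([\gamma]^{<\omega})$, together with an enumeration of it, definable in $N$ from $\gamma$ and $\nu$; since $\gamma$ is in turn definable in $N$ from $\eta$ and $\theta$ and $\ran(k) \prec N$, this puts ${}^{\nu}([\gamma]^{<\omega})$ into $\ran(k)$; and the arithmetic claim $(\gamma^\nu)^N = \gamma$ — proved by splitting into the cases $\gamma = (\theta^\beta)^N$ for some $\beta < \eta$ and $\operatorname{cof}(\gamma) = \eta$, exactly as before — makes that enumeration have domain $\gamma \subseteq \ran(k)$, so all of its values, in particular $\langle a_\xi : \xi < \nu \rangle$, lie in $\ran(k) \subseteq M_E$.

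I expect the only genuine obstacle to be bookkeeping rather than ideas: making the factor embedding $k$ and the normal-form representation of elements of $M_E$ behave correctly when the target $N$ is merely a set (one may prefer to speak of the hull $\ran(k) \prec N$ rather than a total map into $N$), and checking that each definability and absoluteness step that the class argument drew from ``$M$ is ${<}\eta$-closed'' in fact only needs ``$N$ has all functions $\nu \to [\gamma]^{<\omega}$ for $\nu < \eta$'' — which is precisely the reduction that the remark after the preceding discussion already points out. All the mathematical content is contained in that discussion.
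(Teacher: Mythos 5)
Your proposal is correct and is essentially the paper's own proof: the paper establishes the lemma precisely by observing that the preceding class-embedding argument only used that the target has all functions $\nu \rightarrow [\gamma]^{<\omega}$ for $\nu < \eta$, which is exactly the reduction you carry out (including the case split $\gamma = (\theta^\beta)^N$ versus $\operatorname{cof}(\gamma) = \eta$ for $(\gamma^\nu)^N = \gamma$). Only be slightly more careful at the very end: membership of $\langle a_\xi \mid \xi < \nu\rangle$ in $\operatorname{ran}(k)$ does not by itself place the sequence in $M_E$; as in the paper's computation you still need $k(f) = f$ for $f \in {}^{\nu}([\gamma]^{<\omega})$ (via $k(\theta^\beta) = \theta^\beta$ and $\crit(k) \geq \gamma$), so that $k^{-1}$ of it is literally the same sequence.
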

Because $j_E$ restricts to an embedding $j_E : V_{\delta + 1} \rightarrow V_{j(\delta) + 1}^{M_E}$, we therefore get:
\begin{corollary}\label{CharPushingUp1}
	Suppose that $\delta$ is a strong limit cardinal, $\eta\leq\kappa$ is regular and $\theta>\delta$ is such that $\theta^\omega > \theta$. Then $\kappa$ is $\theta$-tall pushing up $\delta$ with closure ${<}\eta$ if and only if for some $\alpha > \delta$ there is an elementary embedding $h: V_\alpha \rightarrow N$ with $N$ transitive, $\text{crit}(h) = \kappa$, $h(\delta) > \theta$ and such that for all $\nu < \eta$, $N$ has all functions $\nu \rightarrow [\gamma]^{< \omega}$, where $\gamma = \sup\{(\theta^\beta)^N \mid \beta < \eta \}$.
\end{corollary}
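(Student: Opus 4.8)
The plan is to establish the two directions separately; since the substantive extender argument has already been carried out in the preceding lemma, both directions are short. For the backward direction we feed the given set-sized embedding directly into that lemma. For the forward direction we restrict a witnessing class embedding to a sufficiently large $V_\alpha$ and check that the resulting set-sized embedding has the required closure property.

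\emph{Backward direction.} Suppose $\alpha>\delta$ and $h\colon V_\alpha\to N$ is an elementary embedding with $N$ transitive, $\crit(h)=\kappa$, $h(\delta)>\theta$, and such that, for every $\nu<\eta$, $N$ has all functions $\nu\to[\gamma]^{<\omega}$, where $\gamma=\sup\{(\theta^\beta)^N\mid\beta<\eta\}$. Since $\delta$ is a strong limit cardinal, $\eta\leq\kappa$ is regular, and $\theta^\omega>\theta$, the hypotheses of the preceding lemma are exactly met by $h$. Forming the extender $E$ with seed set $[\gamma]^{<\omega}$ derived from $h$ and consisting of ultrafilters on $[\delta]^n$ for $n<\omega$, that lemma immediately yields that the canonical embedding $j_E\colon V\to M_E$ into the extender power of $V$ by $E$ witnesses that $\kappa$ is $\theta$-tall pushing up $\delta$ with closure ${<}\eta$.

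\emph{Forward direction.} Suppose $j\colon V\to M$ witnesses that $\kappa$ is $\theta$-tall pushing up $\delta$ with closure ${<}\eta$: $\crit(j)=\kappa$, $M^{{<}\eta}\subseteq M$, and $j(\delta)>\theta$. Repeating the computation from the paragraph preceding the lemma, the ${<}\eta$-closure of $M$ gives ${}^\beta\theta\subseteq M$ for all $\beta<\eta$, and since $M$ believes $j(\delta)$ to be a strong limit cardinal above $\theta$ we obtain $(\theta^\beta)^M<j(\delta)$; hence $\gamma^{*}:=\sup\{(\theta^\beta)^M\mid\beta<\eta\}\leq j(\delta)$. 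Again by ${<}\eta$-closure, $M$ contains every function $\nu\to[\gamma^{*}]^{<\omega}$ for $\nu<\eta$, and all such function sets have rank below $j(\delta)+\omega$. Now choose $\alpha$ with $\delta<\alpha$ and $j(\alpha)>j(\delta)+\omega$ — for instance $\alpha=\delta\cdot2$, so $j(\alpha)=j(\delta)\cdot2$ — set $N:=V^{M}_{j(\alpha)}=j(V_\alpha)$, and let $h:=j\restriction V_\alpha\colon V_\alpha\to N$. Then $h$ is elementary, $\crit(h)=\kappa$ since $\kappa<\delta<\alpha$, and $h(\delta)=j(\delta)>\theta$. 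Because $j(\alpha)$ lies above $(\theta^\beta)^M$ and above the ranks of the bijections in $M$ witnessing these cardinalities, $(\theta^\beta)^N=(\theta^\beta)^M$ for each $\beta<\eta$, so $\gamma:=\sup\{(\theta^\beta)^N\mid\beta<\eta\}=\gamma^{*}\leq j(\delta)<j(\alpha)$; and since $M$ has every function $\nu\to[\gamma]^{<\omega}$ and these lie in $V^{M}_{j(\alpha)}$, the model $N$ has them too. Thus $h$ is the required set-sized embedding.

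The only delicate part is the bookkeeping in the forward direction: one must choose $\alpha$ large enough that $N=V^{M}_{j(\alpha)}$ both computes $\gamma$ the same way $M$ does and contains all the functions $\nu\to[\gamma]^{<\omega}$ for $\nu<\eta$. This is governed entirely by the ${<}\eta$-closure of $M$ and the fact that $j(\delta)$ is a strong limit cardinal in $M$, exactly as in the discussion preceding the lemma. The genuinely substantive content — that a set-sized embedding with the stated closure already recovers a class embedding witnessing tallness pushing up $\delta$ — is precisely that lemma, so nothing new is needed here.
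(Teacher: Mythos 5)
Your proof is correct, and the backward direction is exactly the paper's: it is an immediate application of the preceding lemma. Where you diverge is the forward direction. The paper obtains the set-sized embedding by running the extender construction from the discussion preceding the lemma: from the class witness $j\colon V\to M$ it passes to $j_E\colon V\to M_E$, shows $M_E$ is ${<}\eta$-closed, and then simply restricts $j_E$ to $V_{\delta+1}\to V^{M_E}_{j_E(\delta)+1}$. You instead restrict the original class witness $j$ directly to $V_\alpha$ (e.g.\ $\alpha=\delta\cdot 2$) with target $V^{M}_{j(\alpha)}$, and verify by a rank computation that this target computes each $(\theta^\beta)$ as $M$ does (so your $\gamma$ agrees with $\gamma^*$) and contains every function $\nu\to[\gamma]^{<\omega}$ for $\nu<\eta$, all of which lie in $M$ by ${<}\eta$-closure and have rank below $j(\alpha)$ because $j(\delta)$ is a strong limit in $M$. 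Both routes are valid: yours is more direct and avoids invoking the extender machinery for that direction, at the cost of the explicit bookkeeping about where the witnessing bijections and functions sit; the paper's route gets the restriction nearly for free because $j_E$ and the closure of $M_E$ were already established. One cosmetic slip: you justify $\crit(h)=\kappa$ by ``$\kappa<\delta<\alpha$,'' but the hypothesis only gives $\kappa\leq\delta$; since $\kappa\leq\delta<\alpha$ the argument is unaffected.
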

The following corollary can be shown using similar (in fact, easier) arguments as Corollary \ref{CharPushingUp1}
\begin{corollary}\label{CharPushingUp2}
Suppose that $\delta$ is a strong limit cardinal, $\lambda<\kappa$ and $\theta>\delta$ is such that $\theta^\omega > \theta$. Then $\kappa$ is $\theta$-tall pushing up $\delta$ with closure $\lambda$ if and only if for some $\alpha > \delta$ there is an elementary embedding $h: V_\alpha \rightarrow N$ with $N$ transitive, $\text{crit}(h) = \kappa$, $h(\delta) > \theta$ and such that $N$ has all functions $\lambda \rightarrow [(\theta^\lambda)^N]^{< \omega}$.
\end{corollary}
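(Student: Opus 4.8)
The plan is to run the proof of Corollary~\ref{CharPushingUp1} essentially verbatim, exploiting that fixing a single closure ordinal $\lambda$ rather than closure ${<}\eta$ collapses the supremum $\gamma=\sup\{(\theta^\beta)^N\mid\beta<\eta\}$ to the single value $\gamma=(\theta^\lambda)^N$ and so removes the cofinality case analysis; we take $\lambda\geq\omega$ throughout (the hypothesis $\theta^\omega>\theta$ is there precisely to be used in this range).

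For the forward direction, let $j:V\to M$ witness that $\kappa$ is $\theta$-tall pushing up $\delta$ with closure $\lambda$, so $\crit(j)=\kappa$, $M^\lambda\subseteq M$ and $j(\delta)>\theta$. Since $M^\lambda\subseteq M$ we have ${}^\lambda\theta=({}^\lambda\theta)^M$, hence $(\theta^\lambda)^M=\theta^\lambda$, and since $M$ believes $j(\delta)$ is a strong limit cardinal above $\theta$ and $\lambda$, we get $\theta^\lambda<j(\delta)$; a routine rank count then shows all functions $\lambda\to[\theta^\lambda]^{<\omega}$ lie in $V^M_{j(\delta)+\omega}$. Put $\alpha=\delta+\omega$, $N=V^M_{j(\delta)+\omega}=V^M_{j(\alpha)}$ and $h=j\restriction V_\alpha:V_\alpha\to N$; then $\crit(h)=\kappa$, $h(\delta)>\theta$, $N$ correctly computes $(\theta^\lambda)^N=\theta^\lambda$, and $N$ has all functions $\lambda\to[(\theta^\lambda)^N]^{<\omega}$, as required.

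For the reverse direction, let $h:V_\alpha\to N$ be as in the statement and set $\gamma=(\theta^\lambda)^N$. Since $\delta$ is a strong limit cardinal, $V_\alpha$ and hence $N$ believes $h(\delta)$ is a strong limit cardinal above $\theta$ and $\lambda$, so $\gamma<h(\delta)$; and since $\theta^\omega>\theta$ and $\lambda\geq\omega$, $\gamma\geq\theta^\lambda\geq\theta^\omega>\theta$. Derive from $h$ the $(\delta,\gamma)$-extender $E$ with seed set $[\gamma]^{<\omega}$ by $X\in E_a\iff a\in h(X)$ (for $a\in[\gamma]^{<\omega}$, $X\subseteq[\delta]^{|a|}$), let $j_E:V\to M_E=\Ult(V,E)$ and let $k:M_E\to N$ be the canonical factor map with $h=k\circ j_E$. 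By the standard analysis of extender powers (\cite[26.1 and 26.2]{kanamori:higher}) we have $\crit(j_E)=\kappa$, $j_E(\delta)\geq\gamma>\theta$, $\crit(k)\geq\gamma$, $M_E=\{j_E(f)(a)\mid a\in[\gamma]^{<\omega},\ f:[\delta]^{|a|}\to V\}$ and $\ran(k)=\{h(f)(a)\mid a\in[\gamma]^{<\omega},\ f:[\delta]^{|a|}\to V\}\prec N$. It remains only to check $M_E^\lambda\subseteq M_E$, and then $j_E$ witnesses that $\kappa$ is $\theta$-tall pushing up $\delta$ with closure $\lambda$.

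To verify the closure, fix a sequence $\langle j_E(f_\xi)(a_\xi)\mid\xi<\lambda\rangle$ of members of $M_E$ with $a_\xi\in[\gamma]^{<\omega}$. Since $\lambda<\kappa=\crit(j_E)$, we have $j_E(\langle f_\xi\mid\xi<\lambda\rangle)=\langle j_E(f_\xi)\mid\xi<\lambda\rangle\in M_E$, so it suffices to show $s:=\langle a_\xi\mid\xi<\lambda\rangle\in M_E$, since the pointwise evaluation then lies in $M_E$. By hypothesis $N$ has all functions $\lambda\to[\gamma]^{<\omega}$, so $({}^\lambda([\gamma]^{<\omega}))^N={}^\lambda([\gamma]^{<\omega})\ni s$. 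Now $\crit(k)\geq\gamma$ gives $\gamma\subseteq\ran(k)$, so $\theta,\lambda\in\ran(k)$, and hence $\gamma=(\theta^\lambda)^N$ and ${}^\lambda([\gamma]^{<\omega})$, being definable in $N$ from $\theta$ and $\lambda$, lie in $\ran(k)$. Here the argument is strictly easier than in Corollary~\ref{CharPushingUp1}: $(\gamma^\lambda)^N=((\theta^\lambda)^\lambda)^N=(\theta^\lambda)^N=\gamma$ directly, with no cofinality argument, so $N$, and hence $\ran(k)$, has an enumeration of ${}^\lambda([\gamma]^{<\omega})$ of length $\gamma$; evaluating it at the ordinals below $\gamma$, all of which lie in $\ran(k)$, shows ${}^\lambda([\gamma]^{<\omega})\subseteq\ran(k)$. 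Finally, since every ordinal parameter needed to build $s$ is below $\gamma\leq\crit(k)$, we get $k^{-1}(s)=s$, so $s\in M_E$. I expect the only genuine obstacle to be this last bookkeeping with the factor map $k$, but it is entirely parallel to, and simpler than, the corresponding step in Corollary~\ref{CharPushingUp1}.
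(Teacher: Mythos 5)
Your proposal is correct and is essentially the paper's intended argument: the paper proves Corollary~\ref{CharPushingUp2} by exactly this device of rerunning the extender construction from Corollary~\ref{CharPushingUp1} with the single closure ordinal $\lambda$, so that $\gamma=(\theta^\lambda)^N$ and the cofinality case analysis disappears since $(\gamma^\lambda)^N=\gamma$ outright. Your handling of the factor map $k$ (fixing ordinals and finite subsets of $\gamma$, hence fixing $s$ pointwise) matches, and even slightly streamlines, the corresponding step in the paper.
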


We will use these results to get more general versions of Theorems \ref{th:tallImpliesSULS} and \ref{th:SULSInfinLogicItselfImpliesTall} as well as results about the $\ULS$ numbers.
\begin{theorem}\label{th:tallPushingDeltaImliesSULS}
If there is a tall cardinal $\kappa$ pushing up $\delta$ with closure ${<}\eta$ for some $\delta\geq\kappa$ and regular $\eta\leq\kappa$, then $\SULS(\cL_{\eta,\eta})$ exists and is at most $\delta$.
\end{theorem}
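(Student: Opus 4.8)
The plan is to run the argument of Theorem~\ref{th:tallImpliesSULS} essentially verbatim, the only change being that the cardinality of the witness at the end is produced from $j(\delta)$ rather than from $j(\kappa)$. First I would fix a language $\tau$, a $\tau$-structure $A$ of size $\gamma\geq\delta$, and a cardinal $\overline\gamma>\gamma$. Since $\gamma\geq\delta\geq\kappa$ we have $\overline\gamma^{+}>\kappa$, so the hypothesis that $\kappa$ is tall pushing up $\delta$ with closure ${<}\eta$ supplies an elementary embedding $j:V\to M$ with $\crit(j)=\kappa$, $M^{{<}\eta}\subseteq M$, and $j(\delta)>\overline\gamma^{+}$. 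Note that for this direction we do not need the set-sized reformulations such as Corollary~\ref{CharPushingUp1}: the defining class embedding is exactly what gets used.

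Next I would build the target structure as in Theorem~\ref{th:tallImpliesSULS}. Since $j\image\tau\subseteq j(\tau)$, the $j(\tau)$-structure $j(A)\in M$ becomes a $\tau$-structure via the renaming taking $\tau$ to $j\image\tau$; let that renaming send $\varphi$ to $\overline\varphi$. I would set $\overline A=j\image A\subseteq j(A)$, a $\tau$-substructure, and observe that $j$ itself witnesses $A\cong\overline A$ as $\tau$-structures. For $\cL_{\eta,\eta}$-elementarity, suppose $\overline A\models_{\cL_{\eta,\eta}}\varphi(j(a))$. Transporting along the isomorphism, $A\models_{\cL_{\eta,\eta}}\varphi(a)$; by elementarity of $j$, $M$ believes $j(A)\models_{\cL_{\eta,\eta}}\overline\varphi(j(a))$, using that $\cL_{\eta,\eta}$-formulas are fixed by $j$ because $\crit(j)=\kappa\geq\eta$ and, by the occurrence number, a formula references fewer than $\eta$ symbols. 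Since $M$ is closed under ${<}\eta$-sequences it is correct about $\cL_{\eta,\eta}$-satisfaction, so $j(A)\models_{\cL_{\eta,\eta}}\overline\varphi(j(a))$ holds in $V$, i.e.\ $j(A)\models_{\cL_{\eta,\eta}}\varphi(j(a))$ modulo the renaming; the reverse implication follows by applying this to $\neg\varphi$, using that $\cL_{\eta,\eta}$ is closed under negation. Hence $\overline A\prec_{\cL_{\eta,\eta}}j(A)$.

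Finally, for the size: since $|A|=\gamma\geq\delta$, elementarity gives that $M$ satisfies $|j(A)|\geq j(\delta)>\overline\gamma^{+}$, and the injection witnessing this lies in $M\subseteq V$, so $|j(A)|\geq\overline\gamma^{+}>\overline\gamma$ in $V$. Thus $j(A)$, viewed as a $\tau$-structure via the renaming, is a $\cL_{\eta,\eta}$-elementary superstructure of $A$ of size at least $\overline\gamma$, which is precisely the property required of $\delta$ in the definition of $\SULS(\cL_{\eta,\eta})$. Therefore $\SULS(\cL_{\eta,\eta})$ exists and is at most $\delta$.

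I do not expect a genuine obstacle: the proof is a transcription of Theorem~\ref{th:tallImpliesSULS}. The one point deserving care is that the two parameters $\delta$ and $\eta$ play different roles — the closure ${<}\eta$ together with the inequality $\crit(j)=\kappa\geq\eta$ is what guarantees both correctness of $M$ about $\cL_{\eta,\eta}$ and invariance of the relevant $\tau$-fragment formulas under $j$, while it is the largeness of $j(\delta)$ (and $j(\kappa)$ need not be large here) that inflates the cardinality of the witness. Since the hypothesis bundles all of these into a single embedding, the argument goes through and the bound $\SULS(\cL_{\eta,\eta})\leq\delta$ follows.
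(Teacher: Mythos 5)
Your proposal is correct and is exactly what the paper intends: the paper omits the proof, stating it is completely analogous to Theorem~\ref{th:tallImpliesSULS}, and your argument is precisely that transcription with $j(\delta)>\overline\gamma^{+}$ replacing $j(\kappa)>\overline\gamma^{+}$ as the source of the cardinality bound. The points you flag (fixing of $\cL_{\eta,\eta}$-formulas modulo the renaming since $\crit(j)=\kappa\geq\eta$, correctness of $M$ about $\cL_{\eta,\eta}$-satisfaction from ${<}\eta$-closure, and transferring the size estimate from $M$ to $V$) are handled the same way in the paper's proof of Theorem~\ref{th:tallImpliesSULS}.
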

The proof is completely analogous to the proof of Theorem~\ref{th:tallImpliesSULS} and indeed, Theorem~\ref{th:tallImpliesSULS} can now be derived as a corollary.
\begin{theorem}\label{th:SULSeta}
	If $\SULS(\mathbb L_{\eta, \eta}) = \delta$, then there is a cardinal $\eta \leq \kappa \leq \delta$ that is tall pushing up $\delta$ with closure ${< }\eta$.
	\begin{proof}
Let $\rho$ be the first $\beth$-fixed point above $\delta$. We first argue that for any $\beth$-fixed point $\theta > \delta$ there is an elementary embedding $j_\theta: V_{\rho} \rightarrow N_\theta$ with $\eta \leq \text{crit}(j_\theta) \leq \delta$, $j_\theta(\delta) > \theta$ and such that $N_\theta$ has all functions $f:\nu \rightarrow [\alpha]^{< \omega}$ for all $\nu < \eta$ and all ordinals $\alpha \in N_\theta$. We consider the structure $\mathcal M = (V_{\rho}, \in, \delta)$. Since $\text{SULST}(\mathbb L_{\eta, \eta}) = \delta$, there is a model $\mathcal N=(N,\mathsf E,\bar\delta)$, with $|N| \gg \theta$, and such that $\mathcal M\prec_{\cL_{\eta,\eta}}\mathcal N$. Because $\mathcal M$ satisfies the $\mathbb L_{\eta, \eta}$-sentence asserting well-foundedness, we can assume that $N$ is transitive, $\mathsf E = \in$ and $j: M \rightarrow N$ is $\mathbb L_{\eta, \eta}$-elementary with $j(\delta) = \bar \delta$. Further, $\mathcal M$ satisfies, for every $\nu < \eta$, the sentence $\chi_\nu$ of $\mathcal L_{\eta, \eta}$, truthfully asserting that $V_\rho$ has all functions $\lambda \rightarrow [\alpha]^{< \omega}$ for any ordinal $\alpha$ in $V_\rho$.
Because $|N| \gg \theta$, we get that $\theta \in N$. By elementarity, $\mathcal N$ believes that there is no $\beth$-fixed point above $\bar \delta$, so since $\mathcal N$ sees that $\theta$ is a $\beth$-fixed point, it follows that $\bar\delta > \theta >\delta$. In particular, $\crit(j)  \leq \delta$. Since every ordinal $\xi<\eta$ is definable in the logic $\cL_{\eta,\eta}$, we must have $\eta\leq\crit(j)$. Because $\mathcal N$ satisfies the sentences $\chi_\nu$ for all $\nu < \eta$, it has all the required functions. So $j$ is how we promised.
		
Because there are unboundedly many $\theta > \delta$ but boundedly many $\kappa \leq \delta$, we can fix a single $\eta \leq \kappa \leq \delta$ such that for any $\theta>\delta$, there is an elementary embedding $j_\theta: V_{\rho} \rightarrow N_\theta$ with $\crit(j_\theta)=\kappa$, $j_\theta(\delta) > \theta$ and such that $N_\theta$ has all functions $\nu \rightarrow [\alpha]^{< \omega}$ for all $\nu < \eta$ and all ordinals $\alpha \in N_\theta$. Let $\kappa\leq\delta^*\leq \delta$ be the least cardinal such that for any $\theta>\delta^*$, there is an elementary embedding $j_\theta: V_{\rho^*} \rightarrow N_\theta$, where $\rho^*$ is the least $\beth$-fixed point above $\delta^*$, with $\crit(j_\theta)=\kappa$, $j_\theta(\delta^*) > \theta$ and such that $N_\theta$ has all functions $\nu \rightarrow [\alpha]^{< \omega}$ for all $\nu < \eta$ and all ordinals $\alpha \in N_\theta$. Let's argue that $\delta^*$ is a strong limit. If $\delta^*$ is not a strong limit, then there is $\gamma < \delta^*$ with $\rho^* > 2^\gamma \geq \delta^*$. Note that $\rho^*$ is the least $\beth$-fixed point above $\gamma$. Consider any strong limit $\theta> \rho^*$. By assumption, there is an elementary embedding $j_\theta: V_{\rho^*}\rightarrow N_\theta$ with $\crit(j_\theta)= \kappa^*$ and $j_\theta(\delta^*) > \theta$. Now because $2^\gamma \geq \delta^*$, by elementarity $$N_\theta \models 2^{j(\gamma)} \geq j(\delta^*) > \theta.$$ But then because $\theta$ is a strong limit, also $N_\theta \models j(\gamma) \geq \theta$. Because this works for any $\theta$, this is a contradiction to the minimality of $\delta^*$, verifying that $\delta^*$ is a strong limit cardinal. Since the target models $N_\theta$ have all the functions required in Corollary~\ref{CharPushingUp1}, the embeddings we produced verify that $\kappa^*$ is tall pushing up $\delta^*$, so in particular also pushing up $\delta\geq \delta^*$, with closure $< \eta$.
	\end{proof}
\end{theorem}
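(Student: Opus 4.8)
The plan is to follow the template used in the proofs of Theorems~\ref{th:ULSWF} and~\ref{th:SULSInfinLogicItselfImpliesTall}: feed a well-chosen set-sized structure into the $\SULS$ property, read off a set-sized elementary embedding, and then invoke Corollary~\ref{CharPushingUp1} to upgrade it to a genuine tallness-pushing-up embedding on $V$. Since Corollary~\ref{CharPushingUp1} requires the pushed-up ordinal to be a strong limit and requires $\theta^\omega>\theta$, I will use only $\beth$-fixed points $\theta$ of cofinality $\omega$ as targets (there are cofinally many of these), and I will have to work to locate a strong limit $\delta^*\le\delta$ that can stand in for $\delta$.

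First I would set $\rho$ to be the least $\beth$-fixed point above $\delta$ and apply $\SULS(\cL_{\eta,\eta})=\delta$ to $\mathcal M=(V_\rho,\in,\delta)$. Given a $\beth$-fixed point $\theta>\rho$ of cofinality $\omega$, the $\SULS$ property gives $\mathcal N=(N,\mathsf E,\bar\delta)$ with $\mathcal M\prec_{\cL_{\eta,\eta}}\mathcal N$ and $|N|\gg\theta$. Since $\cL_{\eta,\eta}\supseteq\cL_{\omega_1,\omega_1}$ can express well-foundedness and $\mathcal M$ is well-founded, $\mathsf E$ is well-founded, so collapsing produces an $\cL_{\eta,\eta}$-elementary $j\colon V_\rho\to N$ with $j(\delta)=\bar\delta$. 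Since $\theta$ is a $\beth$-fixed point, $|V_\theta|=\theta$, so any transitive set of size $>\theta$ contains an ordinal $\ge\theta$ and hence contains $\theta$; thus $\theta\in N$. By elementarity $\mathcal N$ believes there is no $\beth$-fixed point above $\bar\delta$, yet it correctly sees $\theta$ as one, forcing $\bar\delta>\theta>\delta$ and hence $\crit(j)\le\delta$. On the other hand, every $\xi<\eta$ is $\cL_{\eta,\eta}$-definable, so $\eta\le\crit(j)$. Finally, for each $\nu<\eta$ the structure $\mathcal M$ truthfully satisfies the $\cL_{\eta,\eta}$-sentence $\chi_\nu$ asserting that it contains all functions $\nu\to[\alpha]^{<\omega}$ for every ordinal $\alpha$, so $N$ inherits these functions. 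Thus for cofinally many $\theta$ I obtain $j_\theta\colon V_\rho\to N_\theta$ with $\eta\le\crit(j_\theta)\le\delta$, $j_\theta(\delta)>\theta$, and $N_\theta$ carrying all functions $\nu\to[\alpha]^{<\omega}$ for $\nu<\eta$ and $\alpha\in N_\theta$.

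A pigeonhole on the critical points — cofinally many $\theta$, but boundedly many possible values of $\crit(j_\theta)$ below $\delta$ — fixes a single $\eta\le\kappa\le\delta$ that serves as $\crit(j_\theta)$ for cofinally many $\theta$. The crux, and the step I expect to be the main obstacle, is to arrange that the pushed-up ordinal is a strong limit. I would let $\kappa\le\delta^*\le\delta$ be least such that, with $\rho^*$ the least $\beth$-fixed point above $\delta^*$, for cofinally many $\theta$ there is an elementary $j_\theta\colon V_{\rho^*}\to N_\theta$ with $\crit(j_\theta)=\kappa$, $j_\theta(\delta^*)>\theta$, and $N_\theta$ carrying all functions $\nu\to[\alpha]^{<\omega}$ for $\nu<\eta$ and $\alpha\in N_\theta$; such a $\delta^*$ exists because $\delta$ itself has this property. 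Now $\kappa$, being the critical point of an elementary embedding between transitive sets, is measurable, hence a strong limit, so if $\delta^*=\kappa$ we are done. Otherwise, if $\delta^*$ were not a strong limit, choose $\kappa\le\gamma<\delta^*$ with $2^\gamma\ge\delta^*$; one checks $\rho^*$ is then also the least $\beth$-fixed point above $\gamma$, and for our $\theta$'s elementarity gives $N_\theta\models 2^{j_\theta(\gamma)}\ge j_\theta(\delta^*)>\theta$, whence $N_\theta\models j_\theta(\gamma)\ge\theta$ as $\theta$ is a strong limit. This shows $\gamma<\delta^*$ already has the defining property, contradicting minimality; so $\delta^*$ is a strong limit.

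It remains only to feed the embeddings $j_\theta\colon V_{\rho^*}\to N_\theta$ into Corollary~\ref{CharPushingUp1} with $\delta^*$ in the role of $\delta$: the relevant ordinal $\gamma_\theta=\sup\{(\theta^\beta)^{N_\theta}\mid\beta<\eta\}$ lies in $N_\theta$ (it is definable there from $\theta,\eta\in N_\theta$), and since $N_\theta$ carries all functions $\nu\to[\alpha]^{<\omega}$ for every $\nu<\eta$ and every ordinal $\alpha\in N_\theta$, it in particular carries all functions $\nu\to[\gamma_\theta]^{<\omega}$. Corollary~\ref{CharPushingUp1} then yields that $\kappa$ is $\theta$-tall pushing up $\delta^*$ with closure ${<}\eta$ for cofinally many $\theta$, hence tall pushing up $\delta^*$ with closure ${<}\eta$; since $\delta^*\le\delta$, the same embeddings witness that $\kappa$ is tall pushing up $\delta$ with closure ${<}\eta$, and $\eta\le\kappa\le\delta$, as required. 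Everything other than the strong-limit minimization is routine bookkeeping transferred from the earlier proofs.
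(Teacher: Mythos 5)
Your proposal is correct and follows essentially the same route as the paper's proof: apply $\SULS(\cL_{\eta,\eta})=\delta$ to $(V_\rho,\in,\delta)$ with the sentences $\chi_\nu$, collapse to get embeddings $j_\theta:V_\rho\to N_\theta$ with $\eta\le\crit(j_\theta)\le\delta$ and $j_\theta(\delta)>\theta$, pigeonhole the critical point, minimize to a strong limit $\delta^*$, and feed the result into Corollary~\ref{CharPushingUp1}. Your extra care in restricting to $\beth$-fixed points $\theta$ of cofinality $\omega$ (so that $\theta^\omega>\theta$, as Corollary~\ref{CharPushingUp1} requires) is a small refinement the paper leaves implicit, not a different argument.
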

\begin{corollary}
	$\SULS(\cL_{\eta, \eta}) = \delta$ if and only if $\delta$ is the smallest cardinal $\geq \eta$ such that there is a tall cardinal $\eta\leq\kappa\leq\delta$ pushing up $\delta$ with closure $< \eta$.
\end{corollary}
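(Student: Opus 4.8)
The plan is to read off the corollary by combining Theorem~\ref{th:tallPushingDeltaImliesSULS} and Theorem~\ref{th:SULSeta}, together with a short minimality argument; no new construction is needed, since the substantive work (in particular the reduction to set-sized embeddings via the extender analysis behind Corollary~\ref{CharPushingUp1}, and the verification that the witnessing ordinal can be taken to be a strong limit) is already contained in those two results. So the proof of the corollary is purely a matter of bookkeeping.

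First I would prove the left-to-right implication. Assume $\SULS(\cL_{\eta,\eta})=\delta$. By Theorem~\ref{th:SULSeta} there is a cardinal $\kappa$ with $\eta\leq\kappa\leq\delta$ which is tall pushing up $\delta$ with closure ${<}\eta$; in particular $\delta\geq\eta$ and $\delta$ has the property stated in the corollary. To see that $\delta$ is \emph{least} with this property, suppose towards a contradiction that $\eta\leq\delta'<\delta$ and that there is a tall cardinal $\eta\leq\kappa'\leq\delta'$ pushing up $\delta'$ with closure ${<}\eta$. Since $\eta$ is an uncountable regular cardinal and $\eta\leq\kappa'$, Theorem~\ref{th:tallPushingDeltaImliesSULS} applies and yields that $\SULS(\cL_{\eta,\eta})$ exists and is at most $\delta'<\delta$, contradicting $\SULS(\cL_{\eta,\eta})=\delta$.

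Next the right-to-left implication. Suppose $\delta$ is the least cardinal $\geq\eta$ for which some tall cardinal $\kappa\leq\delta$ pushes up $\delta$ with closure ${<}\eta$. By Theorem~\ref{th:tallPushingDeltaImliesSULS}, $\SULS(\cL_{\eta,\eta})$ exists and $\SULS(\cL_{\eta,\eta})\leq\delta$; write $\delta^{*}=\SULS(\cL_{\eta,\eta})$. Applying Theorem~\ref{th:SULSeta} to $\delta^{*}$ produces a cardinal $\kappa^{*}$ with $\eta\leq\kappa^{*}\leq\delta^{*}$ that is tall pushing up $\delta^{*}$ with closure ${<}\eta$; in particular $\delta^{*}\geq\eta$ and $\delta^{*}$ witnesses the property from the corollary. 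By the minimality of $\delta$ we get $\delta\leq\delta^{*}$, and combined with $\delta^{*}\leq\delta$ this gives $\delta=\delta^{*}=\SULS(\cL_{\eta,\eta})$.

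The only point requiring attention is matching up the quantifiers: one must check that the cardinal produced by Theorem~\ref{th:SULSeta} genuinely satisfies $\eta\leq\kappa\leq\delta$ (so it is a legitimate witness for the property as phrased in the corollary), and that the hypotheses of Theorem~\ref{th:tallPushingDeltaImliesSULS} — namely $\eta$ regular and $\eta\leq\kappa'\leq\delta'$ — are satisfied whenever one feeds it a purported smaller witness $\delta'$. Both are immediate from the statements as given, so there is no real obstacle here; the corollary is an immediate consequence of the two theorems once the leastness is unwound in each direction.
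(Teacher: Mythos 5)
Your argument is correct and is exactly how the paper obtains this statement: the corollary is stated as an immediate consequence of Theorem~\ref{th:tallPushingDeltaImliesSULS} and Theorem~\ref{th:SULSeta}, with the leastness unwound in each direction just as you do. No gaps; the quantifier bookkeeping you flag (regularity of $\eta$ and $\eta\leq\kappa\leq\delta$) is indeed all that needs checking.
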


Observe that in the canonical model $L[U]$, there are no tall cardinals $\gamma\leq\delta$ pushing up $\delta$ with closure $\omega$ for the same reason that there are no $\omega_1$-strongly compact cardinals. Thus, in particular, it is consistent that there is a measurable cardinal, but there is no pair $\gamma\leq\delta$ such that $\gamma$ is a tall cardinal pushing up $\delta$ with closure $\omega$. In particular, by Theorem~\ref{th:SULSeta}, if $\SULS(\cL_{\omega_1,\omega_1})$ exists, we must already have a pair $\kappa\leq\delta$ such that $\kappa$ is a tall cardinal pushing up $\delta$ with closure $\omega$. So having an SULST number for $\cL_{\omega_1,\omega_1}$ is stronger than having an SULST number for $\cL(Q^{WF})$.

To consider situations where $\ULS(\cL_{\eta, \eta}) = \eta$, we introduce the following concept.
	
\begin{definition}
	A cardinal $\delta$ is \emph{supreme for tallness} if for all $\lambda<\delta$ and ordinals $\theta$, there is a cardinal $\lambda<\kappa\leq\delta$ that is $\theta$-tall pushing up $\delta$ with closure $\lambda$.
\end{definition}
Observe that a cardinal $\delta$ is supreme for tallness if and only if for every $\lambda<\delta$, there is a cardinal $\lambda < \kappa\leq\delta$ that is tall pushing up $\delta$ with closure $\lambda$. This follows because there are proper class many $\theta$ and the cardinals $\kappa$ are bounded by $\delta$. Observe also that a tall cardinal is trivially supreme for tallness. A non-tall cardinal that is a limit of tall cardinals is also supreme for tallness. Thus, a supreme for tallness cardinal can be singular. On the other hand, a regular supreme for tallness cardinal is inaccessible because it is a limit of measurable cardinals. But we show below that it need not be weakly compact (Theorem~\ref{th:ULSnotSULS}).

\begin{theorem}\label{th:ULSLeta}
	If $\delta$ is supreme for tallness, then for every regular $\eta\leq\delta$, $\ULS(\cL_{\eta,\eta})$ exists and is at most $\delta$. In particular, if $\delta$ is regular, then $\ULS(\cL_{\delta,\delta})=\delta$.
\end{theorem}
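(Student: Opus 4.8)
The plan is to mimic the proof of Theorem~\ref{th:tallImpliesSULS}, replacing the tallness embeddings (which push up the critical point) by embeddings that push up $\delta$. Fix a regular cardinal $\eta\leq\delta$, a $\tau$-structure $M$ with $M\models_{\cL_{\eta,\eta}}\varphi$ and $|M|=\gamma\geq\delta$, and a cardinal $\overline\gamma>\gamma$; the goal is a $\tau$-structure $\overline M$ of size at least $\overline\gamma$ with $M\subseteq\overline M$ a substructure and $\overline M\models_{\cL_{\eta,\eta}}\varphi$. By the occurrence number of $\cL_{\eta,\eta}$ we may assume $|\tau|<\eta$, and by renaming we may take $\tau$ and $\varphi$ to have set-theoretic rank below, and $\varphi$ to be built from conjunctions and quantifier blocks of size below, some cardinal $\lambda<\delta$; moreover we arrange $\lambda\geq\eta$ when $\eta<\delta$. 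Thus $\varphi\in\cL_{\lambda,\lambda}(\tau)$. When $\eta=\delta$, the existence of such a $\lambda$ uses that a regular supreme for tallness cardinal is inaccessible (being a regular limit of measurable cardinals), so $H_\delta=V_\delta$ and, $\delta$ being a limit cardinal, the conjunctions and quantifier blocks occurring in the hereditarily small object $\varphi$ are bounded in size below $\delta$.

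Fix any cardinal $\theta>\overline\gamma$. Since $\delta$ is supreme for tallness, there is a cardinal $\lambda<\kappa\leq\delta$ and an elementary embedding $j\colon V\to\mathcal M$ with $\crit(j)=\kappa$, $\mathcal M^{\lambda}\subseteq\mathcal M$, and $j(\delta)>\theta$. Since $\kappa>\lambda$ exceeds the ranks of $\tau$ and $\varphi$, the embedding $j$ fixes $\tau$ and $\varphi$ pointwise. As in the proof of Theorem~\ref{th:measImpliesSULSWF}, $\overline N:=j\image M$ is a $\tau$-substructure of $j(M)\in\mathcal M$ and $j\restriction M\colon M\to\overline N$ is a $\tau$-isomorphism; since logics respect isomorphisms, replacing $\overline N$ inside $j(M)$ by an isomorphic copy of $M$ itself produces a $\tau$-structure $\overline M\cong j(M)$ with $M\subseteq\overline M$ a substructure. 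For the cardinality, $\mathcal M\models|j(M)|=j(\gamma)$ and $j(\gamma)\geq j(\delta)>\theta$ since $\gamma\geq\delta$; as $\mathcal M$ is transitive this is correct, so $|\overline M|=|j(M)|\geq\theta>\overline\gamma$.

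Finally we check $j(M)\models_{\cL_{\eta,\eta}}\varphi$. By elementarity $\mathcal M$ believes $j(M)\models_{j(\cL_{\eta,\eta})}\varphi$. If $\eta<\delta$, then $\crit(j)>\eta$ gives $j(\cL_{\eta,\eta})=\cL_{\eta,\eta}$, and $\mathcal M^{\lambda}\subseteq\mathcal M$ with $\lambda\geq\eta$ gives $\mathcal M^{{<}\eta}\subseteq\mathcal M$, so $\mathcal M$ is correct about $\cL_{\eta,\eta}$-satisfaction and the conclusion follows exactly as in Theorem~\ref{th:tallImpliesSULS}. If $\eta=\delta$, then $j(\cL_{\delta,\delta})=\cL_{j(\delta),j(\delta)}$ is a strictly larger logic; but $\varphi\in\cL_{\lambda,\lambda}(\tau)$ (a fact absolute to $\mathcal M$, which computes the syntax of $\varphi$ correctly), the $\cL_{j(\delta),j(\delta)}$- and $\cL_{\lambda,\lambda}$-satisfaction relations agree on $\varphi$, and $\cL_{\lambda,\lambda}$-satisfaction of the fixed sentence $\varphi$ in $j(M)\in\mathcal M$ is absolute between $\mathcal M$ and $V$ because $\mathcal M^{{<}\lambda}\subseteq\mathcal M$. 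Hence $j(M)\models_{\cL_{\lambda,\lambda}}\varphi$ holds in $V$, and since $\lambda<\delta$ this yields $j(M)\models_{\cL_{\delta,\delta}}\varphi$. This shows $\ULS(\cL_{\eta,\eta})\leq\delta$; when $\delta$ is regular, combining with Proposition~\ref{prop:ulsLetaetageqeta} gives $\ULS(\cL_{\delta,\delta})=\delta$.

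The only genuinely delicate point, and the reason this is not a verbatim copy of Theorem~\ref{th:tallImpliesSULS}, is the case $\eta=\delta$: the critical point of $j$ is forced to be at most $\delta$, so $j$ only fixes the particular sentence $\varphi$ rather than all of $\cL_{\eta,\eta}$, and the target $\mathcal M$ is only $\lambda$-closed for some $\lambda<\delta$; correctness of $\mathcal M$ about the satisfaction of $\varphi$ viewed inside the larger logic $\cL_{j(\delta),j(\delta)}$ is recovered precisely because $\varphi$ already lies in $\cL_{\lambda,\lambda}$, which requires inaccessibility of $\delta$. Everything else is routine and taken over from Theorems~\ref{th:measImpliesSULSWF} and~\ref{th:tallImpliesSULS}.
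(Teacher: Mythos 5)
Your proposal is correct and takes essentially the same route as the paper: supremeness for tallness supplies a $\theta$-tall-pushing-up-$\delta$ embedding whose critical point and closure lie above the relevant bounds for the single sentence $\varphi$, the image $j\image M\subseteq j(M)$ gives the substructure, elementarity gives $|j(M)|\geq j(\delta)>\theta$, and the closure of the target guarantees correctness about satisfaction of $\varphi$. The only difference is bookkeeping: the paper avoids your case split $\eta<\delta$ versus $\eta=\delta$ by bounding just the conjunction/quantifier lengths of $\varphi$ by some $\lambda<\eta$ (using regularity of $\eta$) and working modulo the renaming $\tau\mapsto j\image\tau$, so that $j(\varphi)$ is the renamed $\varphi$, whereas you pre-rename $\tau$ and $\varphi$ into $V_\lambda$ so that $j$ fixes them outright --- both are fine.
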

\begin{proof}
	Suppose that $N$ is a $\tau$-structure of size $\gamma\geq\delta$, $\eta\leq\delta$, and $\varphi$ is a sentence in $\cL_{\eta,\eta}(\tau)$ such that $N\models_{\cL_{\eta,\eta}}\varphi$. Since $\eta$ is regular, there is $\lambda<\eta$ such that the length of all conjunctions and quantifiers in $\varphi$ is at most $\lambda$. Let $\overline\gamma>\gamma$ be a cardinal. By our assumption there exists a cardinal $\lambda<\kappa\leq\delta$ such that $\kappa$ is $\overline\gamma^+$-tall pushing up $\delta$ with closure $\lambda$. Let $j:V\to M$ be an elementary embedding with $\crit(j)=\kappa$, $j(\delta)>\overline\gamma^+$, and $M^\lambda\subseteq M$. Consider the $j(\tau)$-structure $j(N)$. Since $j\image\tau\subseteq\tau$, $j(N)$ is a $\tau$-structure modulo the renaming which takes $\tau$ to $j\image\tau$. The renaming gives rise to the associated bijection between formulas in $\tau$ and $j\image\tau$, so specifically, let it send $\psi$ to $\overline\psi$.
Consider the $\tau$-substructure $\overline N=j\image N\subseteq j(N)$. Clearly, the bijection $j$ witnesses that $N\cong \overline N$ as $\tau$-structures. Thus, via the isomorphism, we have that $N$ is a $\tau$-substructure of $j(N)$. By elementarity, $M$ satisfies that $j(N)\models_{\cL_{j(\eta),j(\eta)}}j(\varphi)$. Since the length of all conjunctions and disjunctions in $\varphi$ is bounded by $\lambda<\kappa$, $j(\varphi)=\overline\varphi$ is the renamed version of $\varphi$, which means that $M$ satisfies that $j(N)\models_{\cL_{\eta,\eta}}\overline\varphi$, and by closure of $M$, it is correct about it. It follows that $j(N)\models_{\cL_{\eta,\eta}}\varphi$ as the $\tau$-structure via our renaming. Since $|N|=\gamma$, by elementarity, $M$ satisfies that $|j(N)|=j(\gamma)\geq j(\delta)\geq \overline\gamma^+$. Thus, in $V$, $|j(N)|>\overline\gamma$.
\end{proof}

\begin{theorem}
	If $\ULS(\mathbb L_{\eta, \eta}) = \eta$, then $\eta$ is supreme for tallness.
\end{theorem}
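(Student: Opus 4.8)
The plan is to adapt the proof of Theorem~\ref{th:SULSeta} to the $\ULS$ setting, replacing the $\cL_{\eta,\eta}$-elementarity that $\SULS$ supplies by first-order elementarity obtained from a truth predicate (as in the truth-predicate section), and replacing the closure-${<}\eta$ characterization of tallness pushing up by the closure-$\lambda$ version, Corollary~\ref{CharPushingUp2}. By the remark following the definition of \emph{supreme for tallness}, it suffices to show: for each $\lambda<\eta$ there is a cardinal $\lambda<\kappa\leq\eta$ that is tall pushing up $\eta$ with closure $\lambda$. Since ``$\theta$-tall pushing up $\eta$ with closure $\lambda$'' implies ``$\theta'$-tall pushing up $\eta$ with closure $\lambda$'' for $\theta'<\theta$, and the $\beth$-fixed points of cofinality $\omega$ are cofinal in $\Ord$, it is in turn enough to produce, for each such $\theta$, \emph{some} $\kappa_\theta\in(\lambda,\eta]$ that is $\theta$-tall pushing up $\eta$ with closure $\lambda$; a pigeonhole over the proper class of these $\theta$ then yields a single $\kappa^\ast\in(\lambda,\eta]$ that works for a proper class of $\theta$, hence, by that monotonicity, for all $\theta$.

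So fix $\lambda<\eta$ and a $\beth$-fixed point $\theta>\eta$ with $\mathrm{cf}(\theta)=\omega$ (so $\theta^\omega>\theta$). Consider the structure $\mathcal M=(\her{\eta},\in,\eta,\langle c_\xi\mid\xi\leq\lambda\rangle,\Tr)$, where $\Tr$ is a truth predicate for $(\her{\eta},\in)$ and $c_\xi$ names the ordinal $\xi$; recall that $\her{\eta}$ is ${<}\eta$-closed, has $\eta$ as its largest cardinal, and contains every subset of $[\eta]^{<\omega}$. Then $\mathcal M$ satisfies the single $\cL_{\eta,\eta}$-sentence $\varphi$ which is the conjunction of: (1)~extensionality, a finite fragment of $\ZFC^-$, and well-foundedness; (2)~``$\Tr$ is a truth predicate for me'' (the first-order sentence $\varphi_{\mathrm{truth}}$); (3)~``$\eta$ is the largest cardinal''; (4)~``each $c_\xi$ is an ordinal, $c_\xi\in c_{\xi'}$ whenever $\xi<\xi'\leq\lambda$, and the $\in$-predecessors of $c_\lambda$ are exactly the $c_\xi$ with $\xi<\lambda$'' (a conjunction of fewer than $\eta$ first-order formulas together with a disjunction of length $\lambda$); and (5)~the sentence $\psi_\lambda$ asserting closure under $\lambda$-sequences. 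Since $|\mathcal M|\geq\eta=\ULS(\cL_{\eta,\eta})$, applying $\ULS$ with $\overline\gamma=\bigl(2^{\max(\theta,\eta)}\bigr)^+$ yields a model $\mathcal N=(N,\mathsf E,\overline\eta,\langle\overline c_\xi\rangle,\overline{\Tr})\models_{\cL_{\eta,\eta}}\varphi$ of size ${\geq}\overline\gamma$ with $\mathcal M\subseteq\mathcal N$. By~(1) we may collapse $\mathsf E$ and assume $N$ is transitive, $\mathsf E={\in}$, with an embedding $j\colon\her{\eta}\to N$; by~(2) and the observation of the truth-predicate section, $j$ is first-order elementary. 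By~(4), and since $j$ preserves the constants $c_\xi$, in the transitive model $N$ the ordinal $\overline c_\lambda$ has $\in$-predecessors enumerated increasingly in order type $\lambda$ by $\xi\mapsto\overline c_\xi=j(\xi)$, forcing $j(\xi)=\xi$ for all $\xi\leq\lambda$; hence $\kappa:=\crit(j)>\lambda$. By~(3), $N\models$``$\overline\eta=j(\eta)$ is the largest cardinal'', so (as $N\models\ZFC^-+\AC$) $N\subseteq\her{\overline\eta}$ and $|N|\leq 2^{\overline\eta}$; since $|N|\geq\overline\gamma>2^\theta$, this gives $\overline\eta>\theta$, in particular $j(\eta)>\eta$ and thus $\kappa\leq\eta$. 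Finally, by~(5), $N^\lambda\subseteq N$, and since every subset of $\kappa$ lies in $\her{\eta}$, $j$ yields a $\kappa$-complete nonprincipal ultrafilter on $\kappa$, so $\kappa$ is measurable.

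Performing this construction for every $\lambda<\eta$ shows that for each $\lambda<\eta$ there is a measurable cardinal in $(\lambda,\eta]$; hence $\eta$ is either measurable or a limit of measurables, and in either case, being regular, it is a strong limit (indeed inaccessible). We may therefore apply Corollary~\ref{CharPushingUp2} with $\delta=\eta$ — its proof applies verbatim to the embedding $j\colon\her{\eta}\to N$ in place of an embedding on a $V_\alpha$, since all that is used of the domain is that it contains every subset of $[\eta]^{<\omega}$ — and conclude from $\crit(j)=\kappa>\lambda$, $j(\eta)>\theta$, and $N^\lambda\subseteq N$ that $\kappa$ is $\theta$-tall pushing up $\eta$ with closure $\lambda$. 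Running over all $\theta$ and pigeonholing as in the first paragraph produces, for each $\lambda<\eta$, a single $\kappa^\ast\in(\lambda,\eta]$ that is tall pushing up $\eta$ with closure $\lambda$, so $\eta$ is supreme for tallness.

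The step I expect to be the crux is guaranteeing that the embedding extracted from $\ULS$ actually sends $\eta$ above the prescribed $\theta$. Because $\ULS$ only returns a substructure, and because $\cL_{\eta,\eta}$ cannot pin down the critical point the way $\cL_{\eta,\eta}$-elementarity does in the $\SULS$ argument, one must engineer three things into the structure: the truth predicate (to regain first-order elementarity), the constants $c_\xi$ (to push $\crit(j)$ above $\lambda$), and the requirement that $\eta$ be the largest cardinal of the base structure — so that, taking the superstructure of cardinality past $2^\theta$, the transitive collapse $N$ is forced to be wide enough that $j(\eta)>\theta$. The convenience here is that $\her{\eta}$ simultaneously supplies ${<}\eta$-closure (so $\psi_\lambda$ holds of it) and makes $\eta$ its largest cardinal, resolving the tension that a choice like $V_\rho$ for a $\beth$-fixed point $\rho$ would create. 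The remaining points — that $\psi_\lambda$ genuinely transfers closure under $\lambda$-sequences to the transitive $N$, that $\eta$ is a strong limit so Corollary~\ref{CharPushingUp2} is available, and the harmless passage in that corollary from $V_\alpha$ to $\her{\eta}$ — are routine.
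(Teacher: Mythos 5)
Your proof is correct and follows essentially the same route as the paper's: the same $\cL_{\eta,\eta}$-sentence (well-foundedness, a truth predicate to recover full first-order elementarity, ``$\eta$ is the largest cardinal'' so that a sufficiently large superstructure forces $j(\eta)>\theta$, constants pinning the ordinals $\leq\lambda$ to push $\crit(j)$ above $\lambda$, and $\psi_\lambda$ to get $N^\lambda\subseteq N$), followed by an appeal to Corollary~\ref{CharPushingUp2}. The only deviation is your base structure $H_{\eta^+}$ in place of the paper's $V_\nu$ for the least $\nu>\eta$ of cofinality $>\lambda$ (which is likewise $\lambda$-closed and, being a rank initial segment, lets Corollary~\ref{CharPushingUp2} apply verbatim); your remark that the extender argument tolerates the domain $H_{\eta^+}$ is sound, since with $\delta=\eta$ all the derived measures concentrate on subsets of $[\eta]^{<\omega}$ and the representing functions $[\eta]^{<\omega}\to H_{\eta^+}$ needed for the factor-map/closure computation all belong to $H_{\eta^+}$.
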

	\begin{proof}
		Because $\mathbb L_{\eta, \eta}$ can define well-foundedness and all ordinals $< \eta$, it is easy to see that $\eta$ is either measurable or a limit of measurables. In particular it follows that $\eta$ is a strong limit cardinal. Now let $\lambda < \eta$ and let $\theta > \eta$ be an ordinal with $\theta^\omega > \theta$. We need to find a cardinal $\lambda < \kappa \leq \eta$ that is $\theta$-tall pushing up $\eta$ with closure $\lambda$. Take the smallest ordinal $\nu > \eta$ of cofinality $> \lambda$. Notice that $\nu < \eta^+$. We produce an embedding $j: V_{\nu}\rightarrow N$ with $\lambda<\crit(j)\leq\eta$, $j(\eta) > \theta$ and such that $N^\lambda \subseteq N$. By Corollary~\ref{CharPushingUp2} this is sufficient. For $\xi \leq \lambda$ take constant symbols $c_{\xi}$, let $c_{\xi}^M = \xi$ and consider the structure $M = (V_{\nu}, \in, \eta, c_\xi^M, \Tr)_{\xi \leq \lambda}$, where $\Tr$ is a truth predicate for $(V_{\nu}, \in)$. Notice that because $\nu$ has cofinality greater than $\lambda$, $V_\nu$ is closed under $\lambda$-sequences. Then $M$ satisfies the sentence $\varphi$ of $\cL_{\eta, \eta}$ which is the conjunction of the following sentences:
		\begin{enumerate}
			\item I am well-founded.
			\item Tr is a truth predicate.
			\item $\eta$ is the largest cardinal.
			\item $\psi_\lambda$.
			\item $\bigwedge_{\xi \leq \lambda} \varphi_\xi^\psi(c_\xi)$ (where $\psi:=y\in x$).
		\end{enumerate}
		Since $\eta = \text{ULS} (\mathbb L_{\eta, \eta})$, there is a model $N = (N, E, \bar \eta, c_\xi^N, \overline{\Tr})_{\xi \leq \lambda}$, with $|N|\gg \theta$ and having $M$ as a substructure. It follows that $E$ is well-founded, so we can, by collapsing, assume that $\in = E$, $N$ is transitive and $j: M \rightarrow N$ is an elementary embedding. Since $|N|\gg \theta$, we have that $\eta<\theta<j(\eta)=\bar\eta$. In particular, \hbox{$\text{crit}(j) \leq \eta$}. Because $N \models_{\cL_{\eta,\eta}} \bigwedge_{\xi \leq \lambda} \varphi_\xi^\psi(c_\xi)$, it follows that $c_\xi^N=\xi$, and so $j(\xi)=\xi$ for all $\xi\leq\lambda$. It follows that $\text{crit}(j) > \lambda$. Finally, $N \models \psi_\lambda$ and therefore \hbox{$N^\lambda\subseteq N$}.
	\end{proof}
\begin{corollary}
	For regular cardinals $\eta$, $\ULS(\cL_{\eta, \eta}) = \eta$ if and only if $\eta$ is supreme for tallness.
\end{corollary}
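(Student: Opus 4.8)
The plan is to observe that this corollary is an immediate consequence of the two theorems proved just above it, together with the lower bound of Proposition~\ref{prop:ulsLetaetageqeta}. No new argument is required; the only point that needs a moment's care is matching the role of the regular cardinal $\eta$ with the parameters in Theorem~\ref{th:ULSLeta}.

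First I would handle the forward direction. Assume $\eta$ is regular and $\ULS(\cL_{\eta,\eta}) = \eta$. This is verbatim the hypothesis of the theorem stated immediately before this corollary, whose conclusion is precisely that $\eta$ is supreme for tallness. So this direction is a direct citation.

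For the converse, assume $\eta$ is regular and supreme for tallness. I would apply Theorem~\ref{th:ULSLeta} with $\delta = \eta$: since $\eta$ is regular, the \emph{in particular} clause of that theorem gives $\ULS(\cL_{\eta,\eta}) = \eta$ outright. (If one prefers to read only the general statement of Theorem~\ref{th:ULSLeta}, it yields $\ULS(\cL_{\eta,\eta}) \le \eta$, and Proposition~\ref{prop:ulsLetaetageqeta} supplies the matching lower bound $\ULS(\cL_{\eta,\eta}) \ge \eta$, so equality holds.)

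I do not expect any genuine obstacle here: both implications are restatements of already-established results, and the only verification is that a regular cardinal which is supreme for tallness legitimately plays the role of the regular $\delta$ in Theorem~\ref{th:ULSLeta}, which is immediate from the definitions.
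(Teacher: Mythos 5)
Your proposal is correct and matches the paper's (implicit) justification exactly: the corollary is stated as the immediate combination of Theorem~\ref{th:ULSLeta} (with $\delta=\eta$ regular, using Proposition~\ref{prop:ulsLetaetageqeta} for the lower bound) and the theorem immediately preceding it. Nothing further is needed.
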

Next, we consistently separate the existence of $\ULS(\cL_{\eta,\eta})$ and $\SULS(\cL_{\eta,\eta})$.
\begin{theorem}\label{th:ULSnotSULS}
It is consistent that $\eta$ is an inaccessible cardinal, $\ULS(\cL_{\eta,\eta})$ exists, but $\SULS(\cL_{\eta,\eta})$ does not exist.
\end{theorem}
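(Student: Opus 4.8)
The plan is to produce a model in which $\eta$ is an inaccessible cardinal that is not weakly compact, $\eta$ is a limit of tall cardinals, and no cardinal ${\geq}\eta$ is measurable; both halves of the theorem then fall out of earlier results. Since a limit of tall cardinals is supreme for tallness (as observed above) and an inaccessible cardinal is regular, Theorem~\ref{th:ULSLeta} applied with $\delta=\eta$ gives $\ULS(\cL_{\eta,\eta})=\eta$, so the $\ULS$ number exists. For the other half, suppose towards a contradiction that $\SULS(\cL_{\eta,\eta})$ exists, say $\SULS(\cL_{\eta,\eta})=\delta$. Then Theorem~\ref{th:SULSeta} provides a cardinal $\eta\leq\kappa\leq\delta$ which is tall pushing up $\delta$ with closure ${<}\eta$; but the embedding $j:V\to M$ witnessing this has $\crit(j)=\kappa$, so $\kappa$ is a measurable cardinal ${\geq}\eta$, contrary to the choice of model. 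Hence $\SULS(\cL_{\eta,\eta})$ does not exist.

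It remains to build the model. I would start from the hypothesis that there is an inaccessible, non-measurable cardinal $\eta_0$ that is a limit of tall cardinals --- this holds in a suitable fine-structural inner model $L[\vec E]$ (with $\vec E$ placing cofinally many strong, hence tall, cardinals below $\eta_0$ and carrying no extender at or above $\eta_0$), so in consistency strength it follows from the mild assumption that there is an inaccessible limit of tall cardinals, itself far below a strong cardinal that is a limit of strong cardinals. Over such a model one first runs a Laver-style preparation below $\eta_0$ making every tall $\kappa<\eta_0$ indestructible under ${<}\kappa$-directed-closed forcing, then forces (if $\eta_0$ is still weakly compact) an $\eta_0$-Suslin tree by the standard ${<}\eta_0$-directed-closed forcing, destroying the weak compactness of $\eta_0$ while preserving its inaccessibility and, by indestructibility, the tallness of each $\kappa<\eta_0$; finally one truncates below the least inaccessible above $\eta_0$ (if one exists). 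In the resulting model $W$, $\eta:=\eta_0$ is the largest inaccessible, so $W$ has no measurable cardinal ${\geq}\eta$, while $\eta$ remains inaccessible, not weakly compact, and a limit of tall cardinals.

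The step I expect to be the main obstacle is verifying that the cofinally many $\kappa<\eta$ stay tall throughout. Through the truncation this rests on the fact that $\theta$-tallness, for each $\theta$, is witnessed by a set-sized embedding, equivalently an extender, of small rank --- cf.\ Lemma~\ref{lem:setEmbeddingsTall} --- which is unaffected by truncating above it. Through the Suslin-tree forcing it rests on the indestructibility arranged beforehand, using that this poset is ${<}\kappa$-directed-closed for every $\kappa<\eta$. A secondary, routine matter is to confirm that the preparation and the Suslin-tree forcing, both of size $\eta$, neither disturb the inaccessibility of $\eta$ nor create a measurable cardinal ${>}\eta$ (L\'evy--Solovay), while $\eta$ itself is left non-weakly-compact, hence non-measurable. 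If one is content with the statement of the theorem as written, the Suslin-tree step (and the preparation) may be dropped altogether, since a truncation of the inner model already gives an inaccessible, non-measurable limit of tall cardinals with no measurable cardinal above it.
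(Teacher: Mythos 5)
Your deduction of the theorem from the properties of the model is exactly the paper's: a limit of tall cardinals is supreme for tallness, so Theorem~\ref{th:ULSLeta} gives $\ULS(\cL_{\eta,\eta})=\eta$, while Theorem~\ref{th:SULSeta} converts any putative value $\delta$ of $\SULS(\cL_{\eta,\eta})$ into a cardinal $\eta\leq\kappa\leq\delta$ that is the critical point of an embedding, hence measurable, which your model forbids. Where you differ is in producing the model. The paper does this with a short forcing argument: start with a supercompact $\eta$ (hence a limit of tall cardinals) whose least inaccessible successor is $\nu$, truncate to $V_\nu$, add a Cohen real, and then force with $\Add(\eta,1)$. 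The Cohen real makes $\eta$ superdestructible \cite{Hamkins:smallForcingSuperdestructible}, so after $\Add(\eta,1)$ it is not even weakly compact, while each tall $\kappa<\eta$ stays tall simply because $\Add(\eta,1)$ is ${\leq}\kappa$-distributive \cite{Hamkins:TallCardinals}. This also disposes of your optional forcing step: a Laver-style preparation making \emph{every} tall $\kappa<\eta$ indestructible under all ${<}\kappa$-directed closed forcing is not a known result and is dubious (universal indestructibility phenomena are delicate), but, as you yourself note and as the paper's argument shows, no preparation is needed at all.

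The genuine weak point of your alternative route is the inner-model step. You assert, without proof or citation, that there is a suitable iterable $L[\vec E]$ with cofinally many strong (hence tall) cardinals below an inaccessible $\eta_0$ and no extenders, hence no measurables, at or above $\eta_0$, obtainable from roughly an inaccessible limit of strong cardinals. This is plausible, and if it goes through it yields the theorem from a hypothesis far weaker than the paper's supercompact; but it rests on nontrivial fine-structural facts (existence and iterability of such extender models from the stated hypothesis, and that their measurables are exactly the critical points of extenders on the sequence), none of which appears in the paper and none of which you supply. By contrast, the step you flag as the main obstacle is fine: for $\theta<\nu$ with $\nu$ inaccessible, the set-sized witnesses of Lemma~\ref{lem:setEmbeddingsTall}, combined with Proposition~\ref{prop:tall} applied inside $V_\nu$, can be taken of rank below $\nu$, so tallness of each $\kappa<\eta_0$ passes to the truncation. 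In summary: same skeleton as the paper, a genuinely different (forcing-free, potentially hypothesis-optimal) construction of the model, but the fine-structural existence claim needs an argument or a reference, and the universal-indestructibility step should simply be deleted.
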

\begin{proof}
Let $\eta$ be a supercompact cardinal with an inaccessible cardinal $\nu$ above it, and assume that $\nu$ is the least such inaccessible. Then $\eta$ is a limit of strong cardinals, and hence a limit of tall cardinals. In $V_\nu$, $\eta$ is also a supercompact limit of tall cardinals. Thus, we can assume without loss of generality that \hbox{$V=V_\nu$}, so that there are no inaccessible cardinals above $\eta$. First, we go to a forcing extension $V[c]$ by Cohen forcing. Since small forcing preserves tall cardinals and supercompact cardinals by standard embedding lifting arguments, $\eta$ is still a supercompact limit of tall cardinals in $V[c]$. Next, we go to a forcing extension $V[c][G]$ by $\Add(\eta,1)$. The forcing $\Add(\eta,1)$ is ${<}\eta$-closed and hence, in particular, ${\leq}\kappa$-distributive for every tall cardinal $\kappa<\eta$ in $V[c][G]$. Thus, every tall cardinal $\kappa<\eta$ remains tall in $V[c][G]$ (Theorem~3.1 in \cite{Hamkins:TallCardinals}). The cardinal $\eta$ remains inaccessible by the closure of $\Add(\eta,1)$, but since the Cohen forcing makes $\eta$ super destructible, it is not even weakly compact in $V[c][G]$ \cite{Hamkins:smallForcingSuperdestructible}. In particular, $\eta$ is not tall. Thus, in $V[c][G]$, $\eta$ cannot be $\SULS(\cL_{\eta,\eta})$, and since there are no inaccessible cardinals above $\eta$, $\SULS(\cL_{\eta,\eta})$ doesn't exist. But since $\eta$ is a limit of tall cardinals in $V[c][G]$, it is, in particular, supreme for tallness there, and hence, in $V[c][G]$, $\eta=\ULS(\cL_{\eta,\eta})$.
\end{proof}
Next, we show that consistently we can have $\ULS(\cL_{\eta,\eta})<\SULS(\cL_{\eta,\eta})$.
\begin{theorem}
It is consistent that $\eta$ is an inaccessible cardinal, $\ULS(\cL_{\eta,\eta})$ and $\SULS(\cL_{\eta,\eta})$ both exists, and $\ULS(\cL_{\eta,\eta})<\SULS(\cL_{\eta,\eta}).$
\end{theorem}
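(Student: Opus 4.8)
The plan is to re-run the forcing construction from the proof of Theorem~\ref{th:ULSnotSULS}, but over a ground model that additionally contains a tall cardinal above $\eta$. The effect we want is that, after forcing, $\eta$ is still a limit of tall cardinals (so $\ULS(\cL_{\eta,\eta})=\eta$) but has lost its own tallness (so $\SULS(\cL_{\eta,\eta})>\eta$), while $\SULS(\cL_{\eta,\eta})$ nevertheless exists because a tall cardinal survives above $\eta$. Concretely, I would start with a supercompact cardinal $\eta$ together with a tall cardinal $\kappa>\eta$ (e.g.\ taking $\kappa$ to be supercompact as well), and, exactly as in Theorem~\ref{th:ULSnotSULS}, first force with $\Add(\omega,1)$ and then with $\Add(\eta,1)$, producing $V[c][G]$.

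The verification of the properties of $\eta$ in $V[c][G]$ is identical to the one in Theorem~\ref{th:ULSnotSULS}: $\eta$ stays inaccessible by the ${<}\eta$-closure of $\Add(\eta,1)$; every tall $\mu<\eta$ stays tall, since $\Add(\omega,1)$ is small and $\Add(\eta,1)$ is ${\leq}\mu$-distributive, so that Theorem~3.1 in \cite{Hamkins:TallCardinals} applies, whence $\eta$ remains a limit of tall cardinals and is therefore supreme for tallness; and $\eta$ is no longer weakly compact, hence no longer tall, because the Cohen real makes $\eta$ superdestructible and then $\Add(\eta,1)$ destroys its weak compactness \cite{Hamkins:smallForcingSuperdestructible}. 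The only new point is that $\kappa$ survives: the whole forcing $\Add(\omega,1)*\Add(\eta,1)$ has size $\eta<\kappa$, so it is small relative to $\kappa$, and the standard lifting arguments keep $\kappa$ tall (indeed supercompact) in $V[c][G]$, with $\kappa>\eta$.

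Finally I would read off the conclusion in $V[c][G]$. Since $\eta$ is regular and supreme for tallness, $\ULS(\cL_{\eta,\eta})=\eta$ by Theorem~\ref{th:ULSLeta}. Since $\kappa>\eta$ is tall, Theorem~\ref{th:tallImpliesSULS} gives that $\SULS(\cL_{\eta,\eta})$ exists and is at most $\kappa$. As $\eta$ is not tall, $\SULS(\cL_{\eta,\eta})\neq\eta$ by Theorem~\ref{th:SULSInfinLogicItselfImpliesTall}; and $\SULS(\cL_{\eta,\eta})\geq\eta$ by Theorem~\ref{th:SULSeta} (equivalently, because an $\sL$-elementary extension is in particular a substructure preserving every $\sL$-sentence, so $\SULS(\cL_{\eta,\eta})\geq\ULS(\cL_{\eta,\eta})=\eta$). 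Hence $\eta<\SULS(\cL_{\eta,\eta})\leq\kappa$, so that $\ULS(\cL_{\eta,\eta})=\eta<\SULS(\cL_{\eta,\eta})$, with both numbers existing and $\eta$ inaccessible.

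The main work is the forcing bookkeeping, and it is precisely the tension already resolved in the proof of Theorem~\ref{th:ULSnotSULS}: keeping $\eta$ a limit of tall cardinals while stripping away its own tallness. The genuinely new requirement — that a tall cardinal persist above $\eta$, so that $\SULS(\cL_{\eta,\eta})$ exists — comes for free, since the entire forcing has size $\eta$ and is thus small relative to $\kappa$. So I expect no substantial obstacle beyond assembling these standard ingredients.
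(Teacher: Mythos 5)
Your proposal is correct and is essentially the paper's own argument: start with a supercompact $\eta$ and a tall cardinal above it, run the $\Add(\omega,1)*\Add(\eta,1)$ construction of Theorem~\ref{th:ULSnotSULS} to get $\ULS(\cL_{\eta,\eta})=\eta$ while destroying the tallness of $\eta$, and use smallness of the forcing relative to the tall cardinal above $\eta$ to keep $\SULS(\cL_{\eta,\eta})$ in existence and strictly above $\eta$. The only cosmetic difference is that you spell out the bookkeeping (Theorems~\ref{th:ULSLeta}, \ref{th:tallImpliesSULS}, \ref{th:SULSInfinLogicItselfImpliesTall}) that the paper leaves implicit.
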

\begin{proof}
We will argue as in the proof of Theorem~\ref{th:ULSnotSULS}, but start with a model in which there is a supercompact $\eta$ and a tall cardinal $\nu$ above the supercompact cardinal. We again go to the forcing extension $V[c][G]$, in which $\eta=\ULS(\cL_{\eta,\eta})$, but $\SULS(\cL_{\eta,\eta})\neq\eta$. Next, observe that since tall cardinals are preserved by small forcing and $\Add(\eta,1)$ is small relative to $\nu$, it remains a tall cardinal in $V[c][G]$. Thus, $\SULS(\cL_{\eta,\eta})$ exists.
\end{proof}
\section{Cardinal correct extendible cardinals}\label{sec:cce}
In this section, we introduce new large cardinal notions: cardinal correct extendible cardinals and their variants. We will see in Sections \ref{sec:StrongCompactnessLI} and \ref{sec:ULSLI} that these large cardinals arise naturally from trying to characterize strong compactness cardinals and the $\ULS$ numbers for the logic $\cL(I)$. We will say that a transitive model $M$ of set theory is \emph{cardinal correct} if whenever $M$ believes that $\gamma$ is a cardinal, then $\gamma$ is a cardinal of $V$.
\begin{definition}
A cardinal $\kappa$ is \emph{cardinal correct extendible} if for every $\alpha>\kappa$, there is an elementary embedding $j:V_\alpha\to M$ with $\crit(j)=\kappa$, $j(\kappa)>\alpha$ and $M$ cardinal correct. A cardinal $\kappa$ is \emph{weakly cardinal correct extendible} if we remove the requirement that $j(\alpha)>\kappa$.
\end{definition}
Clearly, extendible cardinals are cardinal correct extendible because the rank initial segments $V_\beta$ are always cardinal correct. Recall that the property that $j(\kappa)>\alpha$ comes for free in the case of extendible cardinals, but it is not clear whether this is the case for cardinal correct extendibles.
\begin{question}
Are weakly cardinal correct extendible cardinals and cardinal correct extendible cardinals equivalent?
\end{question}
Next, we show how cardinal correct extendible cardinals and their weak version are related to strongly compact cardinals.

Recall that a cardinal $\kappa$ is \emph{$\lambda$-compact} for some $\lambda\geq\kappa$ if there is a fine $\kappa$-complete ultrafilter on $P_\kappa(\lambda)$ and a cardinal $\kappa$ is \emph{strongly compact} if it is $\lambda$-compact for every $\lambda\geq\kappa$. By a theorem of Ketonen, if $\kappa$ and $\lambda$ are regular, then $\kappa$ is $\lambda$-compact if and only if every regular $\alpha$ in the interval $[\kappa,\lambda]$ carries a uniform $\kappa$-complete ultrafilter \cite[Theorem 5.9]{Ketonen:strongCompactness}. As Goldberg points out in \cite{Goldberg:cardinalPreservingEmbeddings}, using a theorem of Kunen and Prikry, if $\lambda$ is a successor cardinal, it suffices to show this only for successor cardinals in the interval $[\kappa,\lambda]$. Kunen and Prikry showed that if $\kappa$ is regular and $U$ is a $\kappa^+$-descendingly incomplete ultrafilter on some set, then $U$ is already $\kappa$-descendingly incomplete \cite[Theorem 0.2]{KunenPrikry:uniformUltrafilters}. An ultrafilter $U$ is $\delta$-\emph{descendingly incomplete} if there is a decreasing sequence of sets in $U$ whose intersection is empty. It is easy to see that if an ultrafilter $U$ is  $\kappa$-complete and $\delta$-descendingly incomplete, then there is a uniform $\kappa$-complete ultrafilter $W\leq_{RK}U$ on $\delta$. Now suppose there is a uniform $\kappa$-complete ultrafilter on $\beta^+$ that is $\kappa$-complete. In particular, $U$ is $\beta^+$-descendingly incomplete, and hence by Kunen and Prikry's theorem, it is $\beta$-descendingly incomplete. Thus, by our earlier observation, there is a uniform $\kappa$-complete ultrafilter $W\leq_{RK}U$ on $\beta$.
Thus, we get:
\begin{theorem}\label{th:lambdaCompactness}
If $\kappa$ is regular and $\lambda$ is a successor, then $\kappa$ is $\lambda$-compact if and only if every successor cardinal in the interval $[\kappa,\lambda]$ carries a uniform $\kappa$-complete ultrafilter.
\end{theorem}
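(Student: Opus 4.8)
The plan is to deduce the equivalence from two results already available in the discussion above: Ketonen's characterization of $\lambda$-compactness in terms of uniform $\kappa$-complete ultrafilters on the regular cardinals in $[\kappa,\lambda]$, and the Kunen--Prikry theorem on descendingly incomplete ultrafilters, together with the two elementary observations recorded there. Since $\lambda$ is a successor cardinal it is regular, so Ketonen's theorem applies to the pair $\kappa,\lambda$.

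First I would dispose of the forward direction: if $\kappa$ is $\lambda$-compact, then by Ketonen every regular $\alpha\in[\kappa,\lambda]$ carries a uniform $\kappa$-complete ultrafilter, and in particular every successor cardinal in the interval does.

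For the converse, I would assume that every successor cardinal in $[\kappa,\lambda]$ carries a uniform $\kappa$-complete ultrafilter and then, invoking Ketonen again, reduce to producing such an ultrafilter on every regular $\alpha\in[\kappa,\lambda]$. If $\alpha$ is a successor cardinal, this is the hypothesis. If $\alpha$ is a (necessarily weakly inaccessible) limit cardinal, then since $\lambda$ is a successor we have $\alpha<\lambda$, hence $\alpha^+\leq\lambda$, so $\alpha^+$ is a successor cardinal in $[\kappa,\lambda]$; fix a uniform $\kappa$-complete ultrafilter $U$ on $\alpha^+$. Then $U$ is $\alpha^+$-descendingly incomplete, witnessed by the decreasing sequence $\langle\alpha^+\setminus\xi\mid\xi<\alpha^+\rangle$, whose terms lie in $U$ by uniformity and whose intersection is empty. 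Since $\alpha$ is regular, Kunen--Prikry gives that $U$ is already $\alpha$-descendingly incomplete, and then, as $U$ is $\kappa$-complete, the elementary observation yields a uniform $\kappa$-complete ultrafilter $W\leq_{RK}U$ on $\alpha$. This covers all regular $\alpha\in[\kappa,\lambda]$ — including $\kappa$ itself, whether or not it is a successor — so Ketonen's theorem delivers that $\kappa$ is $\lambda$-compact.

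I do not expect a genuine obstacle here: all the substance is in the two cited theorems. The points that need care are purely bookkeeping — that a regular limit cardinal $\alpha\in[\kappa,\lambda]$ automatically has $\alpha^+\leq\lambda$, which is exactly where the hypothesis that $\lambda$ is a successor enters, and verifying the two elementary facts (that a uniform ultrafilter on a successor $\beta^+$ is $\beta^+$-descendingly incomplete, and that $\kappa$-completeness together with $\delta$-descending incompleteness produces a uniform $\kappa$-complete ultrafilter on $\delta$). One could instead try to build the ultrafilters on the regular limit cardinals directly, but routing everything through Ketonen's theorem and Kunen--Prikry is cleaner and is the route that the preceding paragraph points to.
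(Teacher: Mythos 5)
Your argument is correct and is essentially the paper's own proof: the forward direction is immediate from Ketonen's theorem, and the converse reduces regular limit cardinals in $[\kappa,\lambda]$ to their successors via the Kunen--Prikry theorem together with the observation that a $\kappa$-complete, $\delta$-descendingly incomplete ultrafilter projects to a uniform $\kappa$-complete ultrafilter on $\delta$. The only difference is that you spell out the bookkeeping (the witness for $\alpha^+$-descending incompleteness, the case of $\kappa$ itself, and why $\alpha^+\leq\lambda$) that the paper leaves implicit.
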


We will use the following lemma which is implicit in \cite{Goldberg:cardinalPreservingEmbeddings}.

\begin{lemma}[Goldberg]\label{le:strongCompactnessEmbedding}
	If $j: V_\lambda \rightarrow M$ is an elementary embedding with $M$ cardinal correct, $\crit(j) = \kappa$ and such that $\lambda = \sup\{j^n(\kappa) \colon n \in \omega\}$, then $j(\kappa)$ is inaccessible and $V_{j(\kappa)} \models ``\kappa \text{ is strongly compact}"$.
\end{lemma}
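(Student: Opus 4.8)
The plan is to derive strong compactness of $\kappa$ inside $V_{j(\kappa)}$ by using the Ketonen-style characterization from Theorem~\ref{th:lambdaCompactness}, i.e., by producing uniform $\kappa$-complete ultrafilters on successor cardinals. First I would observe that since $j$ moves $\kappa$ and $\lambda = \sup_n j^n(\kappa)$, the ordinal $j(\kappa)$ is a fixed point of the relevant kind: $j(\lambda)$-type arguments show $j(\kappa) < \lambda$ and $j$ restricted to $V_{j(\kappa)}$ is essentially an internal embedding. To see that $j(\kappa)$ is inaccessible, note that $M$ is cardinal correct, so $M$'s cardinals below $j(\lambda)$ are genuine cardinals; since $j(\kappa)$ is inaccessible in $M$ (being the image of the inaccessible $\kappa$ under an elementary embedding of $V_\lambda$, once we know $\kappa$ is inaccessible — which follows because $\kappa = \crit(j)$ and there is an elementary $j : V_\lambda \to M$), and cardinal correctness transfers regularity and the strong limit property downward, $j(\kappa)$ is really inaccessible.

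Next, the heart of the argument: I would fix a successor cardinal $\beta^+$ in the interval $[\kappa, j(\kappa))$ and produce a uniform $\kappa$-complete ultrafilter on it. Since $\beta^+ < j(\kappa) \le \lambda = \sup_n j^n(\kappa)$, there is some $n$ with $\beta^+ < j^n(\kappa)$; applying $j$ repeatedly and using that $\crit(j) = \kappa$, one gets $\beta^+$ into the range of the iterates in a way that lets one define, for $X \subseteq \beta^+$,
\[
X \in U \iff \xi \in j(X)
\]
for a suitably chosen ``seed'' $\xi < j(\beta^+)$ lying above $\sup j\image\beta^+$ (which is possible because cardinal correctness guarantees $j(\beta^+)$ is a successor cardinal above $\sup j\image \beta^+$, so there is room). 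This $U$ is $\kappa$-complete because $\crit(j) = \kappa$ and $\beta^+$-many pieces of a partition get moved coherently, and it is uniform because the seed sits above the pointwise image. Here one must be careful that $j$ is only an embedding of the set $V_\lambda$, not of $V$, so $X$ and the relevant functions all need to live in $V_\lambda$; since $\beta^+ < \lambda$ and $\lambda$ is a strong limit (being $\sup_n j^n(\kappa)$ with $\kappa$ inaccessible), this is automatic.

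Finally, having produced a uniform $\kappa$-complete ultrafilter on every successor cardinal in $[\kappa, \lambda')$ for every successor $\lambda' < j(\kappa)$, Theorem~\ref{th:lambdaCompactness} gives that $\kappa$ is $\lambda'$-compact for every successor $\lambda' < j(\kappa)$, and since $j(\kappa)$ is inaccessible, this yields $\lambda$-compactness for all $\lambda < j(\kappa)$, i.e., $V_{j(\kappa)} \models ``\kappa \text{ is strongly compact}"$. The main obstacle I anticipate is the bookkeeping around the iterates $j^n$: one needs that the seed for the ultrafilter on $\beta^+$ can be found consistently, which relies on interleaving the facts that $M$ is cardinal correct (so images of successors are successors and leave gaps above pointwise images), that $\lambda$ is the sup of the critical sequence (so every relevant ordinal appears below some $j^n(\kappa)$), and that $\crit(j) = \kappa$ throughout — and checking the $\kappa$-completeness uniformly across all these $\beta^+$ simultaneously rather than one at a time.
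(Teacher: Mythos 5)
Your core construction is the right one, and it is the same Ketonen-style argument the paper uses in its related results (the paper itself only cites Goldberg for this lemma): for a successor cardinal $\beta^+\in[\kappa,j(\kappa))$ one has $j(\beta)\geq j(\kappa)>\beta$, cardinal correctness gives $j(\beta^+)=(j(\beta)^+)^M=j(\beta)^+$, a genuine regular cardinal, so $\sup j\image\beta^+<j(\beta^+)$ (its cofinality is $\beta^+\leq j(\beta)$), and the seed $\sup j\image\beta^+$ yields a uniform $\kappa$-complete ultrafilter on $\beta^+$ (note all relevant sets and ${<}\kappa$-sequences lie in $V_\lambda$ simply because their rank is below $\lambda$ --- you do not need, and have not shown, that $\lambda$ is a strong limit). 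With Theorem~\ref{th:lambdaCompactness} this gives $\lambda'$-compactness of $\kappa$ for all $\lambda'<j(\kappa)$, and the ultrafilters and the relevant statements are absolute to $V_{j(\kappa)}$ once $j(\kappa)$ is known to be inaccessible. Two small inaccuracies: $j(\lambda)$ is not defined (as $\lambda\notin V_\lambda$), and the mechanism is not ``getting $\beta^+$ into the range of the iterates'' but simply the monotonicity fact $j(\beta)\geq j(\kappa)>\beta$.

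The genuine gap is the first claim, that $j(\kappa)$ is inaccessible in $V$. Cardinal correctness says only that $M$'s cardinals are true cardinals; it does not ``transfer regularity and the strong limit property downward.'' A cardinal correct $M$ can believe an ordinal is regular while it is singular in $V$: a cofinal map $f:\mu\to j(\kappa)$ in $V$ need not belong to $M$, and singular cardinals are still cardinals, so no conflict with cardinal correctness arises (compare the ground model of a Prikry extension, which is cardinal correct but wrong about the cofinality of the measurable). Likewise $M$'s power sets may be proper subsets of $V$'s, so $M\models 2^\gamma<j(\kappa)$ places no bound on the true $2^\gamma$; hence ``$j(\kappa)$ is inaccessible in $M$'' (which does follow, since $\kappa$ is inaccessible in $V_\lambda$) gives neither regularity nor the strong limit property of $j(\kappa)$ in $V$ by the route you describe. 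This half of the lemma is not decoration: it is exactly what the paper needs ($V_{j(\kappa)}$ a $\ZFC$ model witnessing a strongly compact cardinal), and your final step also quietly uses it. So a real argument, exploiting the embedding and the full hypothesis $\lambda=\sup_n j^n(\kappa)$ rather than a general downward transfer principle, is missing here.
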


\begin{proposition}\label{prop:ccEimpliesStrongCompact}
If $\kappa$ is cardinal correct extendible, then $\kappa$ is strongly compact.
\end{proposition}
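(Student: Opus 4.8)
The plan is to deduce strong compactness of $\kappa$ directly from the cardinal correct extendibility embeddings, using Goldberg's Lemma~\ref{le:strongCompactnessEmbedding}. Fix a cardinal $\lambda \geq \kappa$; I want to show $\kappa$ is $\lambda$-compact. By Theorem~\ref{th:lambdaCompactness} it suffices to handle $\lambda$ of successor form (increase $\lambda$ if necessary), and in fact it suffices to produce a uniform $\kappa$-complete ultrafilter on every successor cardinal in $[\kappa,\lambda]$; but it is cleaner to produce the ultrafilter on $P_\kappa(\lambda)$ outright via an embedding. The key observation is that cardinal correct extendibility gives, for each $\alpha > \kappa$, an elementary $j : V_\alpha \to M$ with $\crit(j) = \kappa$, $j(\kappa) > \alpha$, and $M$ cardinal correct — and by iterating/choosing $\alpha$ appropriately I can arrange the technical hypothesis of Lemma~\ref{le:strongCompactnessEmbedding}, namely that the domain rank $\alpha$ equals $\sup\{j^n(\kappa) : n \in \omega\}$.

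First I would arrange the self-coherent situation required by the lemma. Starting from any cardinal correct extendible $\kappa$, pick a large $\alpha_0 > \lambda$ and an embedding $j_0 : V_{\alpha_0} \to M_0$ with critical point $\kappa$, $j_0(\kappa) > \alpha_0$, and $M_0$ cardinal correct. The point $j_0(\kappa)$ lies above $\alpha_0$; by a standard reflection/closure argument I can find an ordinal $\alpha$ with $\kappa < \alpha$ and $\alpha = \sup\{ j^n(\kappa) : n \in \omega\}$ for a suitable cardinal correct extendibility embedding $j : V_\alpha \to M$ — essentially: take $\alpha$ to be the sup of the critical sequence of an embedding witnessing cardinal correct extendibility at a sufficiently closed stage, noting that since $j(\kappa) > \alpha'$ for every $\alpha'$ we feed in, the critical sequence is strictly increasing and its sup is a fixed point of the relevant kind. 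Since cardinal correctness of the target is preserved under the constructions involved (restriction to $V_\alpha$-many-sorted reducts, direct limits of cardinal correct models are cardinal correct as cardinalhood is $\Pi_1$ downward and the models agree on cardinals with $V$), the resulting $j : V_\alpha \to M$ with $\alpha = \sup\{j^n(\kappa):n\in\omega\}$ still has $M$ cardinal correct.

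Now Lemma~\ref{le:strongCompactnessEmbedding} applies and yields that $j(\kappa)$ is inaccessible and $V_{j(\kappa)} \models ``\kappa \text{ is strongly compact}"$. Since $\alpha = \sup\{j^n(\kappa):n\in\omega\} < j(\kappa)$ and we chose $\alpha_0 > \lambda$ at the outset (so $\lambda < \alpha < j(\kappa)$), the statement ``$\kappa$ is $\lambda$-compact'' is witnessed inside $V_{j(\kappa)}$ and hence, by upward absoluteness of the existence of a fine $\kappa$-complete ultrafilter on $P_\kappa(\lambda)$ (such an ultrafilter is an element of $V_{j(\kappa)}$ and its fineness and completeness are absolute between $V_{j(\kappa)}$ and $V$ since $P_\kappa(\lambda) \in V_{j(\kappa)}$ and $V_{j(\kappa)}$ is inaccessible, hence a model of enough set theory), $\kappa$ is $\lambda$-compact in $V$. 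As $\lambda$ was arbitrary, $\kappa$ is strongly compact.

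The main obstacle, and the step deserving the most care, is producing an embedding witnessing cardinal correct extendibility whose domain rank $\alpha$ is exactly the supremum of the critical sequence $\langle j^n(\kappa) : n \in \omega\rangle$ — i.e., matching the hypothesis of Goldberg's lemma. One must check that the iterates $j^n$ make sense (that $M$ has enough of $V$, which follows since $M$ contains $V_\alpha$ and $j(\kappa) > \alpha$ forces $V_{\alpha}\subseteq M$ with $\alpha$ above the critical sequence sup) and that passing to this self-coherent $\alpha$ does not destroy cardinal correctness of the target. The absoluteness bookkeeping in the last paragraph is routine once $V_{j(\kappa)}$ is known to be inaccessible.
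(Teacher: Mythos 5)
Your plan does not work, and the obstacle is exactly the step you flag as ``deserving the most care.'' Lemma~\ref{le:strongCompactnessEmbedding} requires a single embedding $j: V_\lambda \to M$ whose domain rank satisfies $\lambda = \sup\{j^n(\kappa) : n \in \omega\}$; in particular $j(\kappa) < \lambda$ and the whole critical sequence lies below the domain rank. A cardinal correct extendibility embedding $j: V_\alpha \to M$ has the opposite feature, $j(\kappa) > \alpha$: then $j(\kappa)$ is not even in the domain $V_\alpha$, so $j^2(\kappa)$ is undefined and there is no ``critical sequence'' to take the sup of, and no amount of re-choosing $\alpha$ can make $\alpha = \sup\{j^n(\kappa)\}$ while keeping $j(\kappa) > \alpha$, since already $j^1(\kappa) > \alpha$. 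Your final paragraph also asserts $\alpha = \sup\{j^n(\kappa)\} < j(\kappa)$, which is impossible on its face (the sup dominates $j(\kappa)$). Moreover, even in situations where Goldberg's lemma does apply (which in this paper is precisely the bad case $j(\kappa) < \lambda$ arising for \emph{weakly} cardinal correct extendible cardinals), its conclusion only gives that $V_{j(\kappa)}$ thinks $\kappa$ is strongly compact, with $j(\kappa)$ \emph{below} the domain rank; that is why the paper's theorem on weakly cardinal correct extendibles has only the disjunctive conclusion. So your route structurally cannot deliver $\lambda$-compactness for arbitrary $\lambda$, hence cannot prove full strong compactness of $\kappa$ in $V$.

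The intended argument is direct and avoids the lemma entirely. By Theorem~\ref{th:lambdaCompactness} it suffices to put a uniform $\kappa$-complete ultrafilter on each successor cardinal $\beta^+ > \kappa$. Take $\alpha > \beta^+$ and $j: V_\alpha \to M$ with $\crit(j) = \kappa$, $j(\kappa) > \alpha$, and $M$ cardinal correct. Then $j(\beta) > \beta$ (since $j(\kappa) > \alpha > \beta^+$), and cardinal correctness gives $j(\beta^+) = (j(\beta)^+)^M = j(\beta)^+$, a genuine regular cardinal above $\beta^+$; hence $j \image \beta^+$ is bounded in $j(\beta^+)$, and with $\gamma = \sup(j\image\beta^+)$ the set $U = \{X \subseteq \beta^+ : \gamma \in j(X)\}$ is a uniform $\kappa$-complete ultrafilter on $\beta^+$. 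This is where the hypothesis $j(\kappa) > \alpha$ (full, not weak, cardinal correct extendibility) is genuinely used, and it is the ingredient your proposal discards by trying to force the embedding into the shape of Lemma~\ref{le:strongCompactnessEmbedding}.
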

\begin{proof}
By Theorem~\ref{th:lambdaCompactness}, it suffices to argue that every successor cardinal $\beta^+>\kappa$ carries a uniform $\kappa$-complete ultrafilter. Let $\alpha>\beta^+$ and take $j:V_\alpha\to M$ with $\crit(j)=\kappa$, $j(\kappa)>\alpha$ and $M$ cardinal correct. Since $j(\kappa)>\alpha>\beta^+$, it follows that $j(\beta)>\beta$. Because $M$ is cardinal correct, we have that $j(\beta)$ is a cardinal and $$(j(\beta)^+)^M=j(\beta^+) = j(\beta)^+ > \beta^+.$$ This means that $j(\beta^+)$ is regular, and so, in particular, $j$ is discontinuous at $\beta^+$, i.e., $j \image \beta^+$ is bounded in $j(\beta^+)$. Thus, we can let $\gamma = \sup(j \image \beta^+) < j(\beta^+)$. It is then easy to check that we can define a uniform $\kappa$-complete ultrafilter $U$ on $\beta^+$ by letting $X \in U \text{ if and only if } \gamma \in j(X)$.

\end{proof}

\begin{theorem}
If $\kappa$ is weakly cardinal correct extendible, then $\kappa$ is strongly compact or there is an inaccessible cardinal $\alpha$ such that $V_\alpha$ satisfies that there is a strongly compact cardinal.
\end{theorem}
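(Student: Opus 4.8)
The plan is to split according to how high the witnessing embeddings can push the critical point. Let $S$ be the class of ordinals $\alpha>\kappa$ for which there is an elementary embedding $j:V_\alpha\to M$ with $\crit(j)=\kappa$, $M$ transitive and cardinal correct, and $j(\kappa)>\alpha$.

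First suppose $S$ is a proper class. I claim $\kappa$ is then cardinal correct extendible, so that the first alternative holds by Proposition~\ref{prop:ccEimpliesStrongCompact}. Indeed, given a limit ordinal $\beta>\kappa$, choose $\alpha\in S$ with $\alpha>\beta$ and a witness $j:V_\alpha\to M$ with $j(\kappa)>\alpha$; then $j\restriction V_\beta:V_\beta\to(V_{j(\beta)})^M$ is elementary with critical point $\kappa$, its target is cardinal correct (its cardinals are cardinals of $M$, hence of $V$), and $(j\restriction V_\beta)(\kappa)=j(\kappa)>\alpha>\beta$. (Cardinal correct extendibility at limit levels yields it at all levels by a further restriction.)

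Now suppose $S$ is bounded, by $\alpha_0$ say. The same restriction argument shows that every witness $j:V_\alpha\to M$ with $\alpha\ge\alpha_0$ satisfies $j(\kappa)\le\alpha_0$ (restrict to $V_{\alpha_0}$ and use $\alpha_0\notin S$). The goal is now to produce an ordinal $\lambda$ and an elementary embedding $j:V_\lambda\to N$ with $N$ transitive and cardinal correct, $\crit(j)=\kappa$, and $\lambda=\sup\{j^n(\kappa):n<\omega\}$; then Lemma~\ref{le:strongCompactnessEmbedding} gives that $j(\kappa)$ is inaccessible and $V_{j(\kappa)}\models$ ``$\kappa$ is strongly compact'', so $\alpha:=j(\kappa)$ is the required inaccessible cardinal. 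To get this, fix a witness $j:V_\alpha\to M$ for a suitably large $\alpha>\alpha_0$ and consider its critical sequence $\kappa=\kappa_0<\kappa_1=j(\kappa)<\kappa_2=j(\kappa_1)<\cdots$ (this sequence is strictly increasing: a fixed point $j(\kappa_n)=\kappa_n$ with $n\ge1$ would force the interval $(\kappa_{n-1},\kappa_n)$ to be empty, hence $\kappa_n$ a successor ordinal, contradicting that $\kappa_n=j(\kappa_{n-1})$ is a limit ordinal). If this sequence is bounded in $\alpha$, set $\lambda=\sup_n\kappa_n$; since $\lambda$ has cofinality $\omega<\kappa$, $j$ is continuous at $\lambda$, so $j(\lambda)=\lambda$, and $j\restriction V_\lambda:V_\lambda\to(V_\lambda)^M$ is as required, with cardinal correct target as before. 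If instead the sequence is cofinal in $\alpha$, then $j(\alpha)=\alpha$ and $j:V_\alpha\to M$ itself works with $\lambda=\alpha$.

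The step I expect to be the main obstacle is arranging that for some choice of $\alpha$ the chosen witness falls into one of these two cases, i.e.\ that its critical sequence does not ``escape'' $V_\alpha$ by having some $\kappa_{n+1}=j(\kappa_n)$ jump to or beyond $\alpha$ while $\kappa_n<\alpha$. The plan is to rule this out by a closure/reflection argument on the levels $\alpha$ — for instance choosing $\alpha$ in a suitable $C^{(n)}$ and closed under the relevant ordinal operations, or minimizing over the escape behaviour — exploiting that, by boundedness of $S$, the critical point itself is always sent below $\alpha_0$, so the escaping behaviour is confined to the tail of the critical sequence above $\alpha_0$ and can be absorbed. Once a witness with non‑escaping critical sequence is secured, the conclusion is immediate from Lemma~\ref{le:strongCompactnessEmbedding}.
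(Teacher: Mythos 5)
Your first case ($S$ a proper class) is fine: having cardinal correct embeddings with $j(\kappa)>\alpha$ at unboundedly many levels $\alpha$ is all that the proof of Proposition~\ref{prop:ccEimpliesStrongCompact} uses, so $\kappa$ is strongly compact there. The problem is the second case, and it is exactly the step you flag: nothing in your argument secures a witness whose critical sequence stays inside its domain, and your proposed remedies do not obviously supply one. The hypothesis only asserts that \emph{some} cardinal correct embedding exists at each level; you have no control over which one you get, and the target $M$ is not a rank initial segment of $V$, so $j(\xi)$ for $\xi\leq\alpha_0$ (in particular $j^2(\kappa)=j(j(\kappa))$) is not bounded by any function of $\alpha$. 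Choosing $\alpha$ in some $C^{(n)}$ or closed under ordinal operations constrains $V_\alpha$, not the external embedding, so it cannot prevent $j(j(\kappa))\geq\alpha$; and ``minimizing over the escape behaviour'' is not an argument. As it stands, the bounded-$S$ case does not yield the second alternative.

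The paper's proof avoids the dichotomy on $S$ altogether and instead lets the failure of strong compactness do the trapping. Assume $\kappa$ is not strongly compact and fix a successor cardinal $\lambda>\kappa$ such that $\kappa$ is not $\lambda$-compact; take any cardinal correct $j:V_{\lambda^+}\to M$ with $\crit(j)=\kappa$. If some $j^n(\kappa)\geq\lambda$, then for every successor cardinal $\beta^+\in[\kappa,\lambda]$ one gets $j(\beta)>\beta$ (either because $\beta\geq j^{n-1}(\kappa)$ and so $j(\beta)\geq j^n(\kappa)\geq\lambda>\beta$, or because $\beta$ lies below some earlier point of the critical sequence), hence by cardinal correctness $j(\beta^+)=j(\beta)^+$ is regular, $j\image\beta^+$ is bounded in it, and one derives a uniform $\kappa$-complete ultrafilter on $\beta^+$; by Theorem~\ref{th:lambdaCompactness} this makes $\kappa$ $\lambda$-compact, a contradiction. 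So $j^n(\kappa)<\lambda$ for all $n$, the critical sequence is trapped below $\lambda$ inside the domain for free, and restricting $j$ to $V_\gamma$ with $\gamma=\sup_n j^n(\kappa)$ gives a cardinal correct embedding to which Lemma~\ref{le:strongCompactnessEmbedding} applies. This use of the least failure of compactness as the bound on the critical sequence is the idea missing from your outline; if you want to salvage your approach, that is the mechanism to import.
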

\begin{proof}
Suppose that $\kappa$ is not strongly compact. Choose some successor $\lambda>\kappa$ such that $\kappa$ is not $\lambda$-compact. Let $j:V_{\lambda^+}\to M$ be an elementary embedding with $\crit(j)=\kappa$ and $M$ cardinal correct. If $j(\kappa)\geq\lambda$, then the argument of the proof of Proposition~\ref{prop:ccEimpliesStrongCompact} would show that $\kappa$ is $\lambda$-compact. Thus, we have $j(\kappa)<\lambda$. This means we can apply $j$ to $j(\kappa)=\gamma$ to get $j^2(\kappa)=j(\gamma)$. Again, assuming that $j(\gamma)\geq\lambda$, we will argue that $\kappa$ must be $\lambda$-strongly compact, and so will be able to conclude that $j(\gamma)<\lambda$. By the same argument as before, we get a discontinuity for successors of $\gamma\leq\beta<\lambda$. But if $\kappa\leq \beta<\gamma$, then $\beta<\gamma=j(\kappa)\leq j(\beta)$. Repeating this argument, we get that $j^n(\kappa)<\lambda$ for all $n<\omega$. Letting $\gamma = \sup\{j^n(\kappa) \mid n < \omega \}$ we get that $j$ restricts to $j: V_\gamma \rightarrow V_\gamma^M$ and the latter is cardinal correct, because $M$ is. By Lemma~\ref{le:strongCompactnessEmbedding}, $j(\kappa)$ is inaccessible and $V_{j(\kappa)}$ satisfies that there is a strongly compact cardinal. Thus, we proved what we promised.
\end{proof}
Thus, either a cardinal correct extendible $\kappa$ is strongly compact or there is some ordinal $\lambda$ such that for cofinally many $\alpha>\kappa$, if $j:V_\alpha\to M$ with $\crit(j)=\kappa$ and $M$ cardinal correct, then $j^n(\kappa)<\lambda$ for all $n<\omega$. It is not clear whether this situation is consistent.

The following variant of cardinal correct extendibility will be especially significant in characterizing the strong compactness cardinals and $\ULS$ numbers for the logic $\cL(I)$. Notice the similarity with our notion of tall cardinals pushing up some ordinal $\delta$.
\begin{definition}
A cardinal $\kappa\leq\delta$ is \emph{cardinal correct extendible pushing up $\delta$} if for every $\alpha>\delta$, there is an elementary embedding $j:V_\alpha\to M$ with $\crit(j)=\kappa$, $M$ cardinal correct, and $j(\delta)>\alpha$.
\end{definition}
Clearly, $\kappa$ is cardinal correct extendible pushing up $\kappa$ if and only if it is cardinal correct extendible. Also, if $\kappa$ is cardinal correct extendible pushing up $\delta$, then it is weakly cardinal correct extendible.

The following lemma will be used to separate supercompact cardinals from cardinal correct extendible cardinals.
\begin{lemma}\label{lem:cceGCH}
If $\kappa$ is cardinal correct extendible pushing up $\delta$ and the $\GCH$ fails at some cardinal $\gamma\geq\delta$, then the $\GCH$ fails cofinally often.
\end{lemma}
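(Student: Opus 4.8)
The plan is to push the failure of the $\GCH$ upward along the embeddings witnessing that $\kappa$ is cardinal correct extendible pushing up $\delta$. Suppose $2^\gamma > \gamma^+$ for some cardinal $\gamma \geq \delta$, and fix an arbitrary ordinal $\lambda$; I want to produce a cardinal $\mu > \lambda$ at which $\GCH$ fails. First I would choose $\alpha > \delta$ large enough --- say a $\beth$-fixed point above $\max\{\lambda,\delta,2^\gamma\}$ --- so that $V_\alpha$ correctly computes the cardinals $\gamma^+$ and $\gamma^{++}$ together with $\mathcal{P}(\gamma)$ and the value $|\mathcal{P}(\gamma)| = 2^\gamma$; in particular $V_\alpha \models$ ``$2^\gamma > \gamma^+$''. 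Then, by the hypothesis on $\kappa$, I fix an elementary embedding $j \colon V_\alpha \to M$ with $\crit(j) = \kappa$, $M$ cardinal correct, and $j(\delta) > \alpha$.

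Write $\mu = j(\gamma)$. Since $\gamma \geq \delta$ we get $\mu = j(\gamma) \geq j(\delta) > \alpha > \lambda$, so it only remains to show that $\GCH$ fails at $\mu$ in $V$. By elementarity, $M \models$ ``$2^\mu > \mu^+$'', so there is in $M$ an injection $f \colon (\mu^{++})^M \to \mathcal{P}^M(\mu)$. I claim that $(\mu^{++})^M > \mu^+$ as computed in $V$: both $(\mu^+)^M$ and $(\mu^{++})^M$ are cardinals of $M$, hence --- by cardinal correctness of $M$ --- cardinals of $V$, and both exceed $\mu$ since $M$ is transitive; if we had $(\mu^{++})^M \leq \mu^+$, then $\mu < (\mu^+)^M < (\mu^{++})^M \leq \mu^+$ would exhibit a $V$-cardinal strictly between $\mu$ and $\mu^+$, which is absurd. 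Now $f \in M \subseteq V$ and $\mathcal{P}^M(\mu) \subseteq \mathcal{P}^V(\mu)$ by transitivity of $M$, so $f$ witnesses $2^\mu \geq |(\mu^{++})^M| \geq \mu^{++} > \mu^+$ in $V$. Thus $\GCH$ fails at $\mu > \lambda$, and since $\lambda$ was arbitrary, $\GCH$ fails cofinally often.

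The only genuinely delicate point is this last transfer step: verifying that cardinal correctness of $M$ pins down $\mu^+$ and $\mu^{++}$ well enough that the $M$-internal failure of $\GCH$ at $\mu$ reflects down to $V$. This is exactly where the hypothesis ``$M$ cardinal correct'' is used (transitivity of $M$ alone would not suffice, since $M$ could have collapsed $\mu^+$). Everything else --- choosing $\alpha$ large enough that $V_\alpha$ genuinely sees the original failure at $\gamma$, invoking the embedding, and the trivial inequality $j(\gamma) \geq j(\delta) > \alpha > \lambda$ coming from $\gamma \geq \delta$ --- is routine bookkeeping.
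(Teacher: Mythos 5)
Your proof is correct and takes essentially the same route as the paper's: push the failure $2^\gamma>\gamma^+$ up by elementarity to $M\models 2^{j(\gamma)}>j(\gamma)^+$ with $j(\gamma)\geq j(\delta)>\alpha>\lambda$, then use cardinal correctness of the transitive target $M$ to reflect the failure back to $V$. The only cosmetic difference is in the transfer step, where the paper notes that $(j(\gamma)^+)^M$ is the true $j(\gamma)^+$ and $(2^{j(\gamma)})^M$ is a genuine cardinal, while you spell out the same point via the injection of $(j(\gamma)^{++})^M$ into $\mathcal P^M(j(\gamma))\subseteq\mathcal P(j(\gamma))$.
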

\begin{proof}
Suppose $\delta\leq\gamma$ and $2^\gamma>\gamma^+$. Fix an ordinal $\lambda>2^\gamma$ and let $\alpha>\lambda$. We will show that the $\GCH$ fails somewhere above $\lambda$. Let $j:V_\alpha\to M$ with $\crit(j)=\kappa$, $M$ cardinal correct, and $j(\delta)>\alpha$. We have $j(\gamma)\geq j(\delta)>\alpha>\lambda$. Since $V_\alpha\models 2^\gamma>\gamma^+$, by elementarity, $M\models 2^{j(\gamma)}>j(\gamma)^+$. Since $M$ is cardinal correct, the $j(\gamma)^+$ of $M$ is the real $j(\gamma)^+$ and $(2^{j(\gamma)})^M$ is a cardinal. Thus, the $\GCH$ really must fail at $j(\gamma)^+$. Since $\lambda$ was chosen arbitrarily, it follows that the $\GCH$ fails cofinally often.
\end{proof}
\begin{theorem}\label{th:cceAboveSupercompact}
It is consistent, relative to an extendible cardinal, that there is an extendible cardinal and for every pair $\kappa\leq\delta$ such that $\kappa$ is cardinal correct extendible pushing up $\delta$, $\delta$ is bigger than the least supercompact cardinal.
\end{theorem}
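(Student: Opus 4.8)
The plan is to build the desired model by forcing over a model of the $\GCH$ that has an extendible cardinal, arranging that the $\GCH$ fails at the least supercompact cardinal while still holding at a proper class of cardinals, and then to finish by invoking Lemma~\ref{lem:cceGCH}.

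First I would assume, without loss of generality, that the ground model $V$ satisfies the $\GCH$ and has an extendible cardinal $\kappa_0$; this can be arranged by the usual Easton-style class forcing for the $\GCH$, which preserves extendible cardinals (and creates no new large cardinals). Let $\lambda<\kappa_0$ be the least supercompact cardinal of $V$, and force with $\mathbb Q=\Add(\lambda,\lambda^{++})$. Since $\lambda$ is inaccessible we have $\lambda^{<\lambda}=\lambda$, so $\mathbb Q$ is ${<}\lambda$-closed and has the $\lambda^+$-c.c.; in particular all cardinals and cofinalities are preserved. In $V[G]$ one has $2^\lambda=\lambda^{++}$, so the $\GCH$ fails at $\lambda$, while a routine nice-names count using the $\GCH$ of $V$ shows that $2^\mu=\mu^+$ for every $\mu\geq\lambda^{++}$; thus the $\GCH$ does \emph{not} fail cofinally often in $V[G]$.

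Next I would verify the preservation facts. By Silver's transfer argument (which uses the $\GCH$ in $V$), forcing with $\mathbb Q$ preserves the supercompactness of $\lambda$. Since $|\mathbb Q|=\lambda^{++}<\kappa_0$, the poset $\mathbb Q$ is small relative to $\kappa_0$, so $\kappa_0$ remains extendible in $V[G]$ by a L\'evy--Solovay argument, and no cardinal strictly between $\lambda^{++}$ and $\kappa_0$ becomes supercompact (while $\lambda^+$ and $\lambda^{++}$ are of course never even measurable). Finally, because $\mathbb Q$ is ${<}\lambda$-closed, it creates no new supercompact cardinals below $\lambda$: if $\mu<\lambda$ were supercompact in $V[G]$, witnessed by $j\colon V[G]\to N$ with $\crit(j)=\mu$, $j(\mu)>\theta$ and $N^\theta\subseteq N$, then, restricting $j$ to the (definable) ground model $V$ and using that $j(\mathbb Q)$ is ${<}j(\mu)$-closed, hence ${\leq}\theta$-closed, over $j(V)$, one checks that $j\restrict V$ witnesses $\theta$-supercompactness of $\mu$ in $V$, contradicting the minimality of $\lambda$. (This is also an instance of the general fact that an extension with the $\lambda$-approximation and cover properties has no new large cardinals at or below $\lambda$.) Hence $\lambda$ is still the least supercompact cardinal of $V[G]$, and $V[G]$ still has an extendible cardinal.

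To conclude, working in $V[G]$, suppose $\kappa\leq\delta$ and $\kappa$ is cardinal correct extendible pushing up $\delta$. If $\delta\leq\lambda$, then the $\GCH$ fails at the cardinal $\lambda\geq\delta$, so by Lemma~\ref{lem:cceGCH} the $\GCH$ fails cofinally often in $V[G]$ -- a contradiction. Hence $\delta>\lambda$, which is exactly the least supercompact cardinal of $V[G]$. The routine heart of the argument is the bookkeeping in the two middle paragraphs; the one genuinely nontrivial ingredient is Silver's preservation of supercompactness under $\Add(\lambda,\lambda^{++})$, and the one place to be careful is ensuring that $\mathbb Q$ introduces no new supercompact below $\lambda$, since that is what pins the least supercompact of $V[G]$ down to exactly $\lambda$ and thereby makes the application of Lemma~\ref{lem:cceGCH} go through.
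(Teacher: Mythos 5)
Your overall strategy (violate $\GCH$ once near the least supercompact, keep $\GCH$ on a tail, preserve the extendible by smallness, and quote Lemma~\ref{lem:cceGCH}) is the same as the paper's, but the step you yourself flag as the crux is wrong: forcing with $\Add(\lambda,\lambda^{++})$ over an \emph{unprepared} model of the $\GCH$ does not preserve the supercompactness of $\lambda$. Silver's theorem is not a transfer argument for this single-step forcing; it requires the reverse Easton preparation below $\lambda$ (the master-condition/lifting argument is carried out over the prepared model). In fact, by Hamkins' gap forcing theorem the cardinal $\lambda$ is not even measurable in $V[G]$: any ultrapower embedding $j:V[G]\to N$ with critical point $\lambda$ would restrict to an embedding $j\restrict V:V\to M$ with $M\subseteq V$ and $N=M[j(G)]$, and since $j(\Add(\lambda,\lambda^{++}))$ is ${<}j(\lambda)$-closed in $M$ it adds no new subsets of $\lambda$, so $P(\lambda)^N=P(\lambda)^M\subseteq V$, contradicting that $N$ is closed under $\lambda$-sequences in $V[G]$ and hence must contain the new Cohen subsets of $\lambda$. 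This is fatal to your plan and not just a fixable citation: once $\lambda$ loses supercompactness, the least supercompact of $V[G]$ sits strictly above $\lambda$, so the conclusion ``$\delta>\lambda$'' that Lemma~\ref{lem:cceGCH} gives you no longer implies ``$\delta$ is above the least supercompact.''

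The paper sidesteps exactly this issue: it first applies the Laver preparation to make the supercompactness of the least supercompact $\nu$ indestructible under ${<}\nu$-directed closed forcing, and then violates $\GCH$ at a cardinal $\gamma$ with $\nu<\gamma<\chi$ via $\Add(\gamma,\gamma^{++})$, which is ${<}\nu$-directed closed; so $\nu$ stays supercompact, $\GCH$ holds above $\gamma$, $\chi$ stays extendible by L\'evy--Solovay, and Lemma~\ref{lem:cceGCH} forces $\delta>\gamma>\nu$. Your argument can be repaired in the same spirit without moving the $\GCH$ violation: Laver-prepare $\lambda$ first and only then force with $\Add(\lambda,\lambda^{++})$, which is ${<}\lambda$-directed closed, so indestructibility keeps $\lambda$ supercompact; the preparation ruins $\GCH$ only below $\lambda$, which is harmless since Lemma~\ref{lem:cceGCH} only needs a failure at some $\gamma\geq\delta$ together with $\GCH$ on a tail above $\lambda^{++}$. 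With that insertion (and dropping the appeal to ``Silver's transfer argument'') your proof goes through and is essentially the paper's.
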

\begin{proof}
We can force the $\GCH$ to hold at all regular cardinals while preserving an extendible cardinal \cite{Traprounis:ExtendiblesGCH}. So we can suppose that we start in a model $V$ in which the $\GCH$ holds at all regular cardinals and there is an extendible cardinal $\chi$. Let $\nu$ be the least supercompact cardinal, and note that $\nu<\chi$.  We force with the Laver preparation \cite{laver:supercompact} to make the supercompactness of $\nu$ indestructible by all ${<}\nu$-directed closed forcing and let $V[G]$ be the resulting forcing extension. Since the Laver preparation has size $\nu$, the $\GCH$ still holds above $\nu$ in $V[G]$ (while it fails badly below $\nu$) and $\chi$ remains extendible. Now fix any cardinal $\nu<\gamma<\chi$ and force with $\Add(\gamma,\gamma^{++})$, the forcing to add $\gamma^{++}$-many Cohen subsets to $\gamma$, and let $V[G][g]$ be the forcing extension. By the indestructibility of $\nu$ in $V[G]$, since $\Add(\gamma,\gamma^{++})$ is ${<}\nu$-directed closed, $\nu$ remains supercompact in $V[G][g]$. Also, the $\GCH$ holds above $\gamma$ in $V[G][g]$ and $\chi$ remains extendible. Thus, by Lemma~\ref{lem:cceGCH}, in $V[G][g]$, there cannot be a pair $\kappa\leq\delta$ such that $\delta\leq\gamma$ and $\kappa$ is cardinal correct extendible pushing up $\delta$.
\end{proof}
\begin{corollary}
It is consistent that a supercompact cardinal is not cardinal correct extendible.\footnote{This result was first pointed out to us by Alejandro Poveda with a more complicated argument using Radin forcing.}
\end{corollary}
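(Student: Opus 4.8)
The plan is to read this off directly from Theorem~\ref{th:cceAboveSupercompact}. First I would work inside the forcing extension $V[G][g]$ constructed in that proof, where there is an extendible cardinal, $\nu$ denotes the least supercompact cardinal, and---crucially---every pair $\kappa\leq\delta$ with $\kappa$ cardinal correct extendible pushing up $\delta$ satisfies $\delta>\nu$. My goal is then simply to observe that in this model $\nu$ itself cannot be cardinal correct extendible, so that $\nu$ is the desired supercompact cardinal that fails to be cardinal correct extendible.

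The key step is the remark recorded immediately after the definition of cardinal correct extendibility pushing up $\delta$: a cardinal $\kappa$ is cardinal correct extendible if and only if it is cardinal correct extendible pushing up $\kappa$. So suppose towards a contradiction that $\nu$ were cardinal correct extendible in $V[G][g]$. Then, instantiating the pair from Theorem~\ref{th:cceAboveSupercompact} as $\kappa=\delta=\nu$, the cardinal $\nu$ is cardinal correct extendible pushing up $\nu$, and the conclusion of that theorem forces $\nu>\nu$, which is absurd. Hence $\nu$ is a supercompact cardinal that is not cardinal correct extendible, establishing the consistency statement.

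I expect no real obstacle here: all the substantive work---the Laver preparation making $\nu$ indestructibly supercompact, the subsequent $\Add(\gamma,\gamma^{++})$ forcing violating the $\GCH$ low down while preserving an extendible cardinal, and the appeal to Lemma~\ref{lem:cceGCH} to exclude any cardinal correct extendible pushing up some $\delta\leq\gamma$---is already carried out in Theorem~\ref{th:cceAboveSupercompact}. The one point worth flagging is that the theorem's conclusion is stated in terms of being ``bigger than the least supercompact cardinal'', so one must apply it to the least supercompact $\nu$ specifically; this is harmless, since for a consistency statement exhibiting one such supercompact cardinal is enough.
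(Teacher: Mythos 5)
Your proof is correct and is exactly the argument the paper intends: the corollary is stated as an immediate consequence of Theorem~\ref{th:cceAboveSupercompact}, using the observation that a cardinal correct extendible cardinal is cardinal correct extendible pushing up itself, so the least supercompact cardinal of that model cannot be cardinal correct extendible.
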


\section{Strong compactness cardinals for $\cL(I)$}\label{sec:StrongCompactnessLI}

In this section, we introduce a large cardinal variant of cardinal correct extendibles characterizing being a strong compactness cardinal for the logic $\cL(I)$.

In order to obtain our theorem, we first need to argue that under certain circumstances, we can expresses well-foundedness in the logic $\cL(I)$. Unlike other logics considered so far, we cannot generally express well-foundedness in the logic $\cL(I)$. However, it turns out that we can express well-foundeness over models of a sufficiently large fragment of set theory that are cardinal correct in the sense described in Section~\ref{sec:abstractLogics}.

Suppose that $(M,\mathsf E)$ is a model of some sufficiently large fragment of ${\rm ZFC}$. Given $\xi,\eta\in \Ord^M$, we shall say that $\xi<^M\eta$ if $M\models\xi\,\mathsf E\,\eta$. We shall say that $M$ is \emph{cardinal correct} if whenever $\kappa\in \text{Card}^M$, then for every $\alpha\in \Ord^M$ with $\alpha <^M\kappa$, $$|\{\xi\in \Ord^M\mid\xi<^M \alpha\}|^V<|\{\xi\in\Ord^M\mid\xi<^M\kappa\}|^V.$$  Note that this definition of cardinal correctness reduces to the definition we gave in Section~\ref{sec:cce} for transitive models of set theory. Clearly, in the logic $\cL(I)$, there is a formula $\varphi_{\mathsf{cor}}$ expressing that $M$ is cardinal correct.

Let ${\rm ZFC}^*$ be a sufficiently large finite fragment of $\ZFC$ such that:
\begin{enumerate}
	\item $\ZFC^*$ proves:
	\begin{itemize}
		\item the ordinals form a well-ordered class,
		\item every set can be well-ordered,
		\item for every ordinal there is an increasing sequence of cardinals of that order-type,
		\item the sets are the union of the von Neumann hierarchy.
	\end{itemize}
	\item If $\lambda=\aleph_\lambda$ is an $\aleph$-fixed point, then $V_\lambda\models\ZFC^*$.
\end{enumerate}

The following result was pointed out to us by Gabe Goldberg.
\begin{theorem}[Goldberg]\label{th:cardinalityQuantifierWellFounded}
	If $(M,\mathsf E)\models{\rm ZFC}^*$ and $(M,\mathsf E)\models_{\cL(I)}\varphi_{\mathsf{cor}}$, then $\mathsf E$ is well-founded.
\end{theorem}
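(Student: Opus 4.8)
The plan is to argue directly that $\mathsf E$ has no infinite descending chain. First I would unpack the hypothesis: since the H\"artig quantifier $I$ expresses equicardinality in $V$, and since $\alpha<^M\kappa$ already forces $|\{\xi\in\Ord^M\mid\xi<^M\alpha\}|^V\le|\{\xi\in\Ord^M\mid\xi<^M\kappa\}|^V$ (the predecessor sets are genuinely nested under $\mathsf E$), the assumption $(M,\mathsf E)\models_{\cL(I)}\varphi_{\mathsf{cor}}$ is equivalent to the concrete statement: for every $\kappa\in\Card^M$ and every $\alpha\in\Ord^M$ with $\alpha<^M\kappa$, one has $|\{\xi\in\Ord^M\mid\xi<^M\alpha\}|^V<|\{\xi\in\Ord^M\mid\xi<^M\kappa\}|^V$. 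Everything below uses only this consequence together with the listed properties of $\ZFC^*$.

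Next I would reduce to the ordinals of $M$. Since $\ZFC^*$ proves that every set belongs to some $V_\alpha$, it proves that every set $x$ has a von Neumann rank $\operatorname{rank}(x)$, with $y\in x\to\operatorname{rank}(y)<\operatorname{rank}(x)$. Hence if $\langle x_n\rangle_{n<\omega}$ were an $\mathsf E$-descending chain (i.e. $x_{n+1}\mathbin{\mathsf E}x_n$ for all $n$), then $\langle\operatorname{rank}^M(x_n)\rangle_{n<\omega}$ would be an $\mathsf E$-descending chain of $M$-ordinals. So it suffices to show that $<^M$ is well-founded on $\Ord^M$.

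The main step is to show that for \emph{every} $M$-ordinal $\gamma$, the initial segment $\bigl(\{\xi\in\Ord^M\mid\xi<^M\gamma\},<^M\bigr)$ is well-founded. Here I would invoke the clause of $\ZFC^*$ asserting that for every ordinal there is a strictly increasing sequence of cardinals of that order-type: this gives, inside $M$, a function $f$ with $M$-domain $\gamma$ such that $M$ thinks each $f(\xi)$ is a cardinal and $\xi<^M\xi'<^M\gamma$ implies $f(\xi)<^Mf(\xi')$; since these are first-order assertions about the actual relation $\mathsf E$, the monotonicity and the fact that each $f(\xi)\in\Card^M$ hold externally. Now define $F(\xi)=\bigl|\{\zeta\in\Ord^M\mid\zeta<^Mf(\xi)\}\bigr|^V$. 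For $\xi<^M\xi'<^M\gamma$ we have $f(\xi)<^Mf(\xi')$ with $f(\xi')\in\Card^M$, so cardinal correctness yields $F(\xi)<F(\xi')$. Thus $F$ is a strictly increasing map from $\bigl(\{\xi\mid\xi<^M\gamma\},<^M\bigr)$ into the class of cardinals of $V$, which is well-ordered; an order-embedding into a well-ordered class has well-founded domain. Finally, any putative $\mathsf E$-descending chain $\alpha_0>^M\alpha_1>^M\cdots$ of $M$-ordinals lies entirely inside $\{\xi\mid\xi<^M\gamma\}$ for $\gamma$ the $M$-successor of $\alpha_0$, contradicting well-foundedness of that initial segment. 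Hence $<^M$ is well-founded on $\Ord^M$, and by the reduction above $\mathsf E$ is well-founded.

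I do not expect a genuine obstacle; the only care needed is bookkeeping. One must check that $\varphi_{\mathsf{cor}}$, the rank function, and the cardinal-sequence principle are all expressible/provable in the chosen finite fragment $\ZFC^*$ — which is exactly what its definition is engineered to guarantee — and that internal first-order assertions of $(M,\mathsf E)$ such as ``$f$ is a strictly increasing function'' and ``$f(\xi)$ is a cardinal,'' as well as the H\"artig-quantifier comparisons, transfer to true external statements, which holds because $\mathsf E$ is the literal membership relation of the structure rather than one taken up to isomorphism. The conceptual heart is simply the observation that cardinal correctness turns any internally increasing sequence of cardinals into an order-embedding of an initial segment of $\Ord^M$ into $V$'s cardinals, and $\ZFC^*$ supplies such sequences below every $M$-ordinal.
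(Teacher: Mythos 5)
Your proof is correct and follows essentially the same route as the paper's: reduce to the ordinals of $M$ via von Neumann rank, then use the $\ZFC^*$-provided increasing sequences of $M$-cardinals together with cardinal correctness to map ($M$-initial segments of) $\Ord^M$ order-preservingly into the genuine cardinals of $V$, which rules out descending $\mathsf E$-chains. The only cosmetic difference is that you argue directly that every initial segment is well-founded, while the paper runs the same computation as a contradiction from an assumed ill-founded set of predecessors of an $M$-cardinal.
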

\begin{proof}
	Suppose that $\mathsf E$ is not well-founded. Since the sets are the union of the von Neumann hierarchy, every set has a rank and therefore, it suffices to show that the ordinals of $M$ are well-founded. Let $\kappa$ be a cardinal in $M$ such that the set of $\mathsf E$-predecessors of $\kappa$ is not well-founded. Let $\{\alpha_\xi\mid\xi<^M\kappa\}$ be a sequence with $\alpha_\xi\in\text{Card}^M$ which exists by our assumption. For $\xi<^M\kappa$, let $$\alpha_\xi^*=|\{\eta\in\Ord^M\mid \eta<^M\alpha_\xi\}|^V$$ be the cardinality in $V$. Since $M\models\varphi_{\mathsf{cor}}$, we must have that for $\xi<^M\eta<^M\kappa$, $\alpha_\xi^*<\alpha_\eta^*$ as ordinals in $V$, but this means that there is an $\omega$-descending sequence in the actual ordinals of $V$, which is impossible.
\end{proof}

\begin{theorem}\label{thm:StrongCompactnessLI}
	The following are equivalent for a cardinal $\delta$:
	\begin{itemize}
		\item[(a)] $\delta$ is a strong compactness cardinal for $\cL(I)$.
		\item[(b)] For every $\gamma > \delta$ there is $\alpha > \gamma$, a transitive set $M$ and an elementary embedding $j: V_\alpha \rightarrow M$ such that $M$ is cardinal correct, $\crit(j) \leq \delta$ and there exists $d \in M$ such that $j \image \gamma \subseteq d$ and $M \models |d| < j(\delta)$.
	\end{itemize}
	\begin{proof}
		First assume (a) and fix $\gamma > \delta$. Take any $\aleph$-fixed point $\alpha > \gamma$. Let $\tau$ be a language consisting of a binary relation $\in$ and constants $\{c_x \mid x \in V_\alpha \} \cup \{d\}$. Let $T$ be the $\cL(I)(\tau)$-theory consisting of the following:
		\begin{enumerate}
			\item ${\rm ED}_{\cL(I)}(V_\alpha,\in,c_x)_{x\in V_\alpha}$ (each $c_x$ is interpreted by the associated set $x$)
			\item $\{c_\xi\in d\mid \xi<\gamma\}$
			\item $|d|<c_\delta$
		\end{enumerate}
		Clearly, the theory $T$ is ${<}\delta$-satisfiable, as witnessed by the model $(V_\alpha,\in,c_x)_{x\in V_\alpha}$. Thus, since $\delta$ is a strong compactness cardinal of $\cL(I)$, we can fix a model $$\mathcal N=(N,{\mathsf E},c_x,d)_{x\in V_\alpha}\models T.$$ The model $N$ is well-founded by Theorem~\ref{th:cardinalityQuantifierWellFounded} because $\ZFC^*$ and $\varphi_{\mathsf{cor}}$ are in the elementary diagram. It also follows that $N$ is cardinal correct. Thus, associating $N$ with its collapse and ${\mathsf E}$ with $\in$, we get an  elementary embedding $j:V_\alpha\to N$ with $\crit(j)\leq\delta$. Since $\mathcal N\models T$, it follows that $|d|<j(\delta)$ and $j\image\gamma\subseteq d$.
		
		Next, we assume (b).  Let $\tau$ be a language and let $T$ be a ${<}\delta$-satisfiable $\cL(I)(\tau)$-theory. Assume without loss of generality that $|T| = \gamma > \delta$. Let $\gamma'>\gamma$ be large enough that $T,\tau,\delta\in V_{\gamma'}$. By (b), take $\alpha > \gamma'$ and $j:V_\alpha\to M$ such that $M$ is transitive and cardinal correct, $\crit(j) \leq \delta$ and there is a $d \in M$ such that $j\image T\subseteq d$ and $|d|^M<j(\delta)$. We can assume without loss of generality that $d$ is a $\cL(I)(j(\tau))$-theory. Thus, $M$ has a $j(\tau)$-structure $N$, which it thinks satisfies the $\cL(I)(j(\tau))$-theory $d$. Since $M$ is cardinal correct, $N$ is a really is a model of $d$. Thus, in particular, (the reduct of) $N$ is a $j\image\tau$-structure which satisfies $j\image T$. As usual, via the renaming given by $j$, $N$ is a $\tau$-structure satisfying $T$.
	\end{proof}
\end{theorem}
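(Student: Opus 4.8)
The plan is to prove the two implications separately. The common engine is Goldberg's criterion (Theorem~\ref{th:cardinalityQuantifierWellFounded}): an $\cL(I)$-model of $\ZFC^*$ satisfying $\varphi_{\mathsf{cor}}$ is automatically well-founded, so one can extract an honest embedding, and since cardinal correctness of a transitive model makes $\cL(I)$-satisfaction absolute, truth in such a model transfers to $V$. For (a)$\Rightarrow$(b), fix $\gamma>\delta$ and choose an $\aleph$-fixed point $\alpha>\gamma$, so that $V_\alpha\models\ZFC^*$ and $V_\alpha$ satisfies $\varphi_{\mathsf{cor}}$ (being transitive, hence cardinal correct). In the language with a constant $c_x$ for every $x\in V_\alpha$ and one extra constant $d$, let $T$ be the $\cL(I)$-elementary diagram of $(V_\alpha,\in,c_x)_{x\in V_\alpha}$ together with the sentences $c_\xi\in d$ for $\xi<\gamma$ and the single sentence $|d|<c_\delta$. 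Interpreting $d$ in $V_\alpha$ by the ${<}\delta$ many ordinals occurring in a given ${<}\delta$-sized subtheory shows $T$ is ${<}\delta$-satisfiable, so strong compactness of $\delta$ for $\cL(I)$ gives a model $\mathcal N=(N,\mathsf E,\dots)$. Since $\ZFC^*$ and $\varphi_{\mathsf{cor}}$ lie in the elementary diagram, Theorem~\ref{th:cardinalityQuantifierWellFounded} makes $\mathsf E$ well-founded; after collapsing, $N$ is a transitive set, and it is cardinal correct because over a transitive model $\varphi_{\mathsf{cor}}$ says exactly that. The map $x\mapsto c_x^N$ is an elementary embedding $j\colon V_\alpha\to N$ with $j\image\gamma\subseteq d^N$ and $N\models|d^N|<j(\delta)$, and $|d^N|^N\ge|d^N|^V\ge|j\image\gamma|^V=\gamma>\delta$ forces $j(\delta)>\delta$, hence $\crit(j)\le\delta$. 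This is exactly (b) at $\gamma$.

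For (b)$\Rightarrow$(a), let $T$ be a ${<}\delta$-satisfiable $\cL(I)(\tau)$-theory; if $|T|<\delta$ then $T$ is its own ${<}\delta$-sized subtheory and we are done, so assume $|T|\ge\delta$. Fix $\gamma'$ large enough that $T,\tau,\delta\in V_{\gamma'}$ and $V_{\gamma'}$ already sees that $T$ is ${<}\delta$-satisfiable (possible because there are only set-many ${<}\delta$-sized subtheories of $T$, each realized in some $V_\lambda$). Apply (b) at $\gamma'$ to obtain $\alpha>\gamma'$, a transitive cardinal correct $M$, an elementary $j\colon V_\alpha\to M$ with $\crit(j)\le\delta$, and $d\in M$ with $j\image\gamma'\subseteq d$ and $M\models|d|<j(\delta)$. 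Applying $j$ to a fixed enumeration of $T$ by an ordinal ${<}\gamma'$ and using $j\image\gamma'\subseteq d$, one produces $d'\in M$ with $j\image T\subseteq d'\subseteq j(T)$ and $M\models|d'|<j(\delta)$. By elementarity of $j$ and the fact that $V_\alpha$ sees $T$ is ${<}\delta$-satisfiable, $M$ believes $d'$ is satisfiable, so $M$ contains a $j(\tau)$-structure $\mathcal A$ which it thinks satisfies $d'$; since $M$ is transitive and cardinal correct, and the only non-absolute clause of $\cL(I)$-satisfaction is the equicardinality quantifier — which $M$ evaluates as $V$ does because it computes the cardinalities of the definable subsets of $\mathcal A$ faithfully — $\mathcal A$ genuinely satisfies $d'\supseteq j\image T$. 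Via the renaming identifying $\tau$ with $j\image\tau$, this is a real model of $T$.

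The step I expect to be the main obstacle — and the reason $\cL(I)$ needs separate treatment from the logics considered earlier, in which well-foundedness is outright expressible — is turning the compactness-produced model of a set-theoretic elementary diagram into an honest well-founded embedding; this is precisely why $\ZFC^*$ together with the cardinal-correctness sentence $\varphi_{\mathsf{cor}}$ must be built into $T$, so that Theorem~\ref{th:cardinalityQuantifierWellFounded} applies. A secondary but essential point is that one needs genuine cardinal correctness of $M$, not merely transitivity, for the equicardinality quantifier to be absolute between $M$ and $V$; this is what lets the model found inside $M$ in the second direction be exported, and it explains why cardinal correctness has to be carried through both arguments.
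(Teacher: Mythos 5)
Your proposal is correct and follows essentially the same route as the paper: the same theory (the $\cL(I)$-elementary diagram of $V_\alpha$ plus $c_\xi\in d$ and $|d|<c_\delta$) with Theorem~\ref{th:cardinalityQuantifierWellFounded} giving well-foundedness for (a)$\Rightarrow$(b), and for (b)$\Rightarrow$(a) the same use of elementarity plus cardinal correctness of $M$ to make the internal model of $d\supseteq j\image T$ a genuine model, transferred back via the renaming. You even fill in a few details the paper leaves implicit (why $\crit(j)\le\delta$, why $V_{\gamma'}$ sees the ${<}\delta$-satisfiability, and extracting $j\image T\subseteq d'$ from $j\image\gamma'\subseteq d$), so no changes are needed.
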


Notice that, as there is a proper class of $\gamma > \delta$ but only boundedly many possible critical points $\leq \delta$, if $\delta$ is a strong compactness cardinal for $\cL(I)$, then there is a fixed $\kappa \leq \delta$ such that for any $\gamma > \delta$ there is $\alpha > \gamma$ and an elementary embedding $j: V_\alpha \rightarrow M$ with $M$ transitive and cardinal correct, $\crit(j) = \kappa$ and $d \in M$ such that $j \image \gamma \subseteq d$ and $|d|^M < j(\delta)$. We therefore get:

\begin{corollary}\label{cor:StrongCompactnessLI}
	If $\delta$ is a strong compactness cardinal for $\cL(I)$, then there is $\kappa\leq\delta$ such that $\kappa$ is cardinal correct extendible pushing up $\delta$ and $\delta$ is $\kappa$-strongly compact.
	\begin{proof}
		Take $\kappa \leq \delta$ as a critical point of unboundedly many embeddings witnessing (b) of Theorem~\ref{thm:StrongCompactnessLI} as pointed out above. Then clearly, $\kappa$ is cardinal correct extendible pushing up $\delta$. And it follows from the elementary embedding characterization of $\kappa$-strongly compact cardinals in \cite[Theorem 4.7]{BagariaMagidor:groupRadicals} and (b) that $\delta$ is $\kappa$-strongly compact.
	\end{proof}
\end{corollary}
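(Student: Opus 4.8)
The plan is to read both conclusions off the elementary-embedding characterization of strong compactness for $\cL(I)$ proved in Theorem~\ref{thm:StrongCompactnessLI}(b), together with a pigeonhole on critical points. So assume $\delta$ is a strong compactness cardinal for $\cL(I)$. By Theorem~\ref{thm:StrongCompactnessLI}(b), for every cardinal $\gamma>\delta$ there are an ordinal $\alpha_\gamma>\gamma$, a transitive cardinal correct set $M_\gamma$, an elementary embedding $j_\gamma:V_{\alpha_\gamma}\to M_\gamma$ with $\crit(j_\gamma)\le\delta$, and some $d_\gamma\in M_\gamma$ with $j_\gamma\image\gamma\subseteq d_\gamma$ and $M_\gamma\models|d_\gamma|<j_\gamma(\delta)$. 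Since there is a proper class of cardinals $\gamma>\delta$ but only boundedly many ordinals $\le\delta$, I may fix a single $\kappa\le\delta$ occurring as $\crit(j_\gamma)$ for a cofinal (indeed proper) class $C$ of cardinals $\gamma$. The one small computation I would carry out explicitly is that $j_\gamma(\delta)>\gamma$ for $\gamma\in C$: since $j_\gamma\image\gamma\subseteq d_\gamma$, the set $d_\gamma$ has a subset of order type $\gamma$, and as $M_\gamma$ is transitive and cardinal correct with $\gamma$ a cardinal, this forces $|d_\gamma|^{M_\gamma}\ge\gamma$; together with $M_\gamma\models|d_\gamma|<j_\gamma(\delta)$ this gives $j_\gamma(\delta)>\gamma$.

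Both conclusions now follow mechanically. For ``$\kappa$ is cardinal correct extendible pushing up $\delta$'', fix $\alpha>\delta$, pick $\gamma\in C$ with $\gamma>\alpha$, and restrict $j_\gamma$ to $V_\alpha$ (legitimate since $\alpha<\gamma<\alpha_\gamma$): this is an elementary embedding $V_\alpha\to V_{j_\gamma(\alpha)}^{M_\gamma}$ with critical point $\kappa$ and, by the displayed inequality, $j_\gamma(\delta)>\gamma>\alpha$; the target $V_{j_\gamma(\alpha)}^{M_\gamma}$ is again cardinal correct by a routine truncation argument, since surjections between ordinals below a cardinal have small rank and hence survive the truncation, so the truncated model inherits the correct cardinals of $M_\gamma$. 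For ``$\delta$ is $\kappa$-strongly compact'', I invoke the elementary embedding characterization of $\kappa$-strongly compact cardinals in \cite[Theorem 4.7]{BagariaMagidor:groupRadicals}: given a prescribed ordinal $\lambda$, choose $\gamma\in C$ with $\gamma>\lambda$; then $j_\gamma:V_{\alpha_\gamma}\to M_\gamma$ has $\crit(j_\gamma)=\kappa$, and $d_\gamma\in M_\gamma$ (replaced if necessary by $d_\gamma\cap j_\gamma(\lambda)$) satisfies $j_\gamma\image\lambda\subseteq j_\gamma\image\gamma\subseteq d_\gamma$ together with $M_\gamma\models|d_\gamma|<j_\gamma(\delta)$ — exactly the data that characterization requires.

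The real content is all in Theorem~\ref{thm:StrongCompactnessLI}; everything above it is bookkeeping. The only steps requiring a moment's attention are the two translations: extracting ``cardinal correct extendible pushing up $\delta$'' from the stabilized embeddings — where the key point is the inequality $j_\gamma(\delta)>\gamma$, which ensures that it is $\delta$, and not merely $\kappa$, that is being pushed up — and matching condition (b) precisely to the hypotheses of the Bagaria--Magidor embedding criterion for $\kappa$-strong compactness. Neither is a genuine obstacle, so I would expect the main effort to go into stating these correspondences cleanly rather than into overcoming any difficulty.
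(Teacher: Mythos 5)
Your proposal is correct and follows essentially the same route as the paper: apply Theorem~\ref{thm:StrongCompactnessLI}(b), use the pigeonhole on the boundedly many possible critical points $\leq\delta$ to stabilize a single $\kappa$, derive $j_\gamma(\delta)>\gamma$ from $j_\gamma\image\gamma\subseteq d_\gamma$ and $|d_\gamma|^{M_\gamma}<j_\gamma(\delta)$, and then read off cardinal correct extendibility pushing up $\delta$ (by restricting the embeddings) and $\kappa$-strong compactness of $\delta$ (via the Bagaria--Magidor embedding characterization, intersecting $d_\gamma$ with $j_\gamma(\lambda)$). You merely make explicit the bookkeeping that the paper compresses into ``clearly,'' so no further comment is needed.
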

\begin{corollary}
	Consistency of a strong compactness cardinal for $\cL(I)$ implies the consistency of a strongly compact cardinal.
\end{corollary}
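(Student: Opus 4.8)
The plan is to obtain this as a bookkeeping consequence of Corollary~\ref{cor:StrongCompactnessLI} together with the dichotomy proved earlier in Section~\ref{sec:cce} for weakly cardinal correct extendible cardinals. To establish a relative consistency statement it is enough to manufacture, from an arbitrary model of the stronger theory, a model of the weaker one; so I would fix a model of $\ZFC$ together with ``there is a strong compactness cardinal for $\cL(I)$'', work inside it, and let $\delta$ be such a cardinal.

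The first step is to apply Corollary~\ref{cor:StrongCompactnessLI} to get a cardinal $\kappa\leq\delta$ that is cardinal correct extendible pushing up $\delta$; as noted immediately after the definition of that notion, such a $\kappa$ is in particular weakly cardinal correct extendible. The second step is to invoke the theorem that a weakly cardinal correct extendible cardinal is either strongly compact or else there is an inaccessible $\alpha$ with $V_\alpha\models$ ``there is a strongly compact cardinal''. If $\kappa$ is strongly compact, then the ambient model is itself a model of ``$\ZFC+$ there is a strongly compact cardinal''. Otherwise, $\alpha$ is inaccessible, so $V_\alpha\models\ZFC$, and $V_\alpha$ is a set model of that same theory. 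In either case the theory has a model, hence is consistent.

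I do not expect a genuine obstacle: all the real work is already done in Corollary~\ref{cor:StrongCompactnessLI} and in the dichotomy theorem, so what remains is purely organizational. The only subtlety worth flagging is the routine one about relative consistency arguments: in the first case one should not expect the stronger theory to prove the consistency of ``$\ZFC+$ there is a strongly compact cardinal'' internally, and this is not needed, since the ambient model supplied by the hypothesized consistency already satisfies the weaker theory, and that is all a relative consistency proof requires.
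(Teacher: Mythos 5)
Your proposal is correct and is precisely the route the paper intends (the paper leaves the proof implicit): Corollary~\ref{cor:StrongCompactnessLI} yields a $\kappa\leq\delta$ that is cardinal correct extendible pushing up $\delta$, hence weakly cardinal correct extendible, and the dichotomy theorem of Section~\ref{sec:cce} then gives either a strongly compact cardinal in the ambient model or an inaccessible $\alpha$ with $V_\alpha\models$ ``there is a strongly compact cardinal'', so consistency follows in either case.
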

\begin{corollary}
	It is consistent that the least strong compactness cardinal for $\cL(I)$ is above the least supercompact cardinal.\footnote{This result was first observed by Will Boney and the second author using a different argument.}
\end{corollary}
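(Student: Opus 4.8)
The plan is to read the statement off from Theorem~\ref{th:cceAboveSupercompact} combined with Corollary~\ref{cor:StrongCompactnessLI}, without any new forcing. Let $W$ be a model as produced by Theorem~\ref{th:cceAboveSupercompact}: in $W$ there is an extendible cardinal $\chi$, there is a supercompact cardinal (the cardinal $\nu$ from that proof is preserved by the forcing used there, so ``the least supercompact cardinal'' is meaningful in $W$), and in $W$ every pair $\kappa\leq\delta$ for which $\kappa$ is cardinal correct extendible pushing up $\delta$ satisfies that $\delta$ is above the least supercompact cardinal.

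First I would check that $\cL(I)$ has a strong compactness cardinal in $W$. Since $\cL(I)\subseteq\cL^2$, a ${<}\kappa$-satisfiable $\cL(I)$-theory $T$ is literally a ${<}\kappa$-satisfiable $\cL^2$-theory, and a model of $T$ as an $\cL^2$-theory is a model of $T$ as an $\cL(I)$-theory; hence any strong compactness cardinal for $\cL^2$ is one for $\cL(I)$. By Magidor's theorem \cite{Magidor:Compactness} the least extendible cardinal is a strong compactness cardinal for $\cL^2$, so $\chi$ is a strong compactness cardinal for $\cL(I)$ in $W$. Let $\delta_0\leq\chi$ be the least strong compactness cardinal for $\cL(I)$ in $W$.

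Next I would apply Corollary~\ref{cor:StrongCompactnessLI} to $\delta_0$ inside $W$: there is $\kappa_0\leq\delta_0$ such that $\kappa_0$ is cardinal correct extendible pushing up $\delta_0$. By the defining property of $W$, this forces $\delta_0$ to lie strictly above the least supercompact cardinal of $W$. Hence in $W$ the least strong compactness cardinal for $\cL(I)$ is above the least supercompact cardinal, which is exactly the asserted consistency statement.

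I do not expect any real obstacle here: the heavy lifting is already done in Theorem~\ref{th:cceAboveSupercompact} (the $\GCH$-pattern forcing argument via Lemma~\ref{lem:cceGCH}) and in Corollary~\ref{cor:StrongCompactnessLI}. The only things to double-check are the routine downward transfer of strong compactness from $\cL^2$ to $\cL(I)$ and that the model of Theorem~\ref{th:cceAboveSupercompact} indeed retains a supercompact cardinal, both of which are immediate.
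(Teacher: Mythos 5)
Your proposal is correct and is essentially the paper's own argument: the paper likewise works in the model of Theorem~\ref{th:cceAboveSupercompact}, uses the extendible cardinal there to get a strong compactness cardinal for $\cL(I)$, and concludes via Corollary~\ref{cor:StrongCompactnessLI} that the least such cardinal must exceed the least supercompact cardinal $\nu$. Your write-up just makes explicit the routine transfer of strong compactness from $\cL^2$ to $\cL(I)$, which the paper leaves implicit.
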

\begin{proof}
	Consider the model from Theorem~\ref{th:cceAboveSupercompact}. Since it has an extendible cardinal, we know that there is a strong compactness cardinal for $\cL(I)$. But then the least such cardinal must be above the least supercompact cardinal $\nu$.
\end{proof}
\begin{question}
If $\delta$ is a cardinal and there is a cardinal correct extendible cardinal $\kappa\leq\delta$ pushing up $\delta$, then is $\delta$ a strong compactness cardinal for the logic $\cL(I)$?
\end{question}

For a cardinal $\delta$, the logic $\cL_{\delta, \delta}(I)$ is obtained by adding conjunctions and disjunctions and first-order quantifiers of size $< \delta$ to $\cL(I)$. Considering strong compactness cardinals of this logic gives us a sharper version of Corollary \ref{cor:StrongCompactnessLI}.
\begin{theorem}
	If $\delta$ is a strong compactness cardinal for $\cL_{\delta, \delta}(I)$, then $\delta$ is cardinal correct extendible. Moreover, the cardinal correct extendibility embeddings also witness the strong compactness of $\delta$.
	\begin{proof}
Since every ordinal ${<}\delta$ is definable in the logic $\cL_{\delta, \delta}(I)$, we can use the same argument as in the proof of Theorem \ref{thm:StrongCompactnessLI} to show that for every $\gamma > \delta$ there is $\alpha > \gamma$, a transitive set $M$ and an elementary embedding $j: V_\alpha \rightarrow M$ such that $M$ is cardinal correct, $\crit(j) = \delta$ and there exists $d \in M$ such that $j \image \gamma \subseteq d$ and $M \models |d| < j(\delta)$.  
	\end{proof}
\end{theorem}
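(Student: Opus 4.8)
\medskip
\noindent\textbf{Proof plan.}
The plan is to reprove the implication (a)$\Rightarrow$(b) of Theorem~\ref{thm:StrongCompactnessLI} essentially verbatim, but for the larger logic $\cL_{\delta,\delta}(I)\supseteq\cL(I)$, using the one extra feature of $\cL_{\delta,\delta}$: every ordinal $\xi<\delta$ is definable by a parameter-free $\cL_{\delta,\delta}$-formula $\theta_\xi$ (namely $\varphi^\xi_\psi(x)$ for $\psi(y,x):=y\in x$, built recursively using conjunctions and quantifiers of length $<\delta$ as in Section~\ref{sec:abstractLogics}), so that over a transitive $\in$-model $\theta_\xi$ pins down $\xi$. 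I would fix an $\aleph$-fixed point $\gamma>\delta$, take $\tau$ to have a binary relation $\in$ together with constants $\{c_x\mid x\in V_\gamma\}\cup\{d\}$, and form the $\cL_{\delta,\delta}(I)(\tau)$-theory $T$ consisting of the $\cL_{\delta,\delta}(I)$-elementary diagram of $(V_\gamma,\in,c_x)_{x\in V_\gamma}$ (with each $c_x$ interpreted by $x$), together with $\{c_\xi\in d\mid\xi<\gamma\}$ and the sentence $|d|<c_\delta$. As in Theorem~\ref{thm:StrongCompactnessLI}, $T$ is ${<}\delta$-satisfiable in $(V_\gamma,\in,c_x)_{x\in V_\gamma}$ — a fragment of size ${<}\delta$ mentions fewer than $\delta$ of the $c_\xi$, which may be collected into a set interpreting $d$ of size ${<}\delta$ — so strong compactness of $\delta$ for $\cL_{\delta,\delta}(I)$ gives a model $\mathcal N=(N,\mathsf E,c_x,d)$. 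Since $\ZFC^*$ and $\varphi_{\mathsf{cor}}$ hold in $V_\gamma$ and hence lie in the elementary diagram, Theorem~\ref{th:cardinalityQuantifierWellFounded} makes $\mathsf E$ well-founded; collapsing yields an elementary embedding $j\colon V_\gamma\to N$, $j(x)=c_x^N$, into a transitive, cardinal correct $N$.

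Then I would extract the two facts that matter. First, for each $\xi<\delta$ the sentence $\theta_\xi(c_\xi)$ is in the elementary diagram, so $N\models\theta_\xi(j(\xi))$, hence $j(\xi)=\xi$ and $\crit(j)\geq\delta$. Second, $d$ contains the $\gamma$ pairwise distinct ordinals $j\image\gamma$, so $|d|^N\geq\gamma$ since $N$ is transitive, while $N\models|d|<j(\delta)$; thus $j(\delta)>\gamma>\delta$. Together this gives $\crit(j)=\delta$ and $j(\delta)$ strictly above the rank $\gamma$ of the domain. So for every $\aleph$-fixed point $\gamma>\delta$ we have an elementary embedding $j\colon V_\gamma\to N$ with $N$ transitive and cardinal correct, $\crit(j)=\delta$, and $j(\delta)>\gamma$. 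As $\aleph$-fixed points are cofinal, and a rank-initial segment of a cardinal correct transitive model taken below a limit ordinal is again cardinal correct, the usual reduction to a cofinal class of $\alpha$ upgrades this to a witness $j\colon V_\alpha\to M$ for every $\alpha>\delta$, which is precisely cardinal correct extendibility of $\delta$.

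For the ``moreover'' I would observe that each of these embeddings $j\colon V_\gamma\to N$ also carries the datum $d\in N$ with $j\image\gamma\subseteq d$ and $|d|^N<j(\delta)$ — this is exactly condition (b) of Theorem~\ref{thm:StrongCompactnessLI} (for any ordinal ${<}\gamma$) and the elementary-embedding criterion for strong compactness of \cite[Theorem~4.7]{BagariaMagidor:groupRadicals}. Hence the single family $\{\,j\colon V_\gamma\to N\,\}$ simultaneously witnesses the cardinal correct extendibility of $\delta$ and its strong compactness; and since $\cL_{\delta,\delta}(I)$ extends $\cL_{\delta,\delta}$, Tarski's theorem \cite{Tarski:Compactness} confirms $\delta$ is genuinely strongly compact, so nothing is lost.

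\medskip
\noindent\textbf{Main obstacle.}
The genuinely new step, and the one I expect to require the most care, is forcing $\crit(j)=\delta$ rather than merely $\crit(j)\leq\delta$: one has to confirm that the recursively built ordinal-definitions $\theta_\xi$ really are $\cL_{\delta,\delta}$-formulas (each of length $<\delta$), that they genuinely belong to the $\cL_{\delta,\delta}(I)$-elementary diagram, and that they are evaluated correctly in the transitive model $N$ — and then dovetail this with the counting argument so that $j(\delta)>\gamma$ \emph{strictly}, since it is that strictness which lets the unrestricted embedding $j\colon V_\gamma\to N$ itself (with $\crit(j)=\delta$ sent above the domain rank) serve as a cardinal correct extendibility witness, and not merely as a strong-compactness witness. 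Everything else is a faithful transcription of the proof of Theorem~\ref{thm:StrongCompactnessLI}.
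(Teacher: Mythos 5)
Your proposal is correct and takes essentially the same route as the paper: the paper's proof is precisely the observation that, because every ordinal below $\delta$ is definable in $\cL_{\delta,\delta}(I)$, rerunning the (a)$\Rightarrow$(b) argument of Theorem~\ref{thm:StrongCompactnessLI} fixes all ordinals below $\delta$ and hence forces $\crit(j)=\delta$, while the set $d$ with $j\image\gamma\subseteq d$ and $|d|^M<j(\delta)$ makes these same embeddings witness strong compactness. Your write-up merely fills in details the paper leaves implicit (the pinning-down formulas $\theta_\xi$, the counting argument giving $j(\delta)>\gamma$, and the restriction/cofinality reduction), and is consistent with the paper's level of rigor on those points.
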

\begin{question}
If $\delta$ is cardinal correct extendible, then is $\delta$ a strong compactness cardinal for $\cL_{\delta, \delta}(I)$?
\end{question}

\section{The logic $\cL(I)$}\label{sec:ULSLI}
In this section, we show that a strongly compact cardinal is a lower bound on the consistency strength of the existence of a ULST number for the logic $\cL(I)$ and that $\ULS(\cL(I))$ may be above the least supercompact cardinal. We show that if there is a pair $\kappa\leq\delta$ such that $\kappa$ is cardinal correct extendible pushing up $\delta$, then $\SULS(\cL(I))$ exists and is bounded by $\delta$. Almost conversely, we show that if $\ULS(\cL(I))$ exists, then there is a pair $\kappa\leq\gamma$ such that $\kappa$ is cardinal correct extendible pushing up $\gamma$.

\begin{theorem}
 If there exists a pair $\kappa\leq\delta$ such that $\kappa$ is cardinal correct extendible pushing up $\delta$, then $\SULS(\cL(I))$ exists and is at most $\delta$.
 	\begin{proof}
 		Let $N$ be a $\tau$-structure of size $\geq \delta$ and $\bar \delta > \delta$. Take $\alpha > \bar \delta^+$ with $N \in V_\alpha$ and an elementary embedding $j: V_\alpha \rightarrow M$ with $\crit(j) = \kappa$ and $j(\delta) > \alpha$ such that $M$ is cardinal correct. Consider the $j(\tau)$-structure $j(N) \in M$. Since $j \image \tau \subseteq j(\tau)$, $j(N)$ is a $\tau$-structure modulo the renaming which takes $\tau$ to $j \image \tau$. Consider the $\tau$-substructure $\bar N = j \image N \subseteq j(N)$. The bijection $j$ witnesses that $N \cong \bar N$ as $\tau$-structures. Because $M$ is cardinal correct, and thus correct about satisfaction of formulas of $\cL(I)$, it is easily seen that $\bar N \prec_{\cL(I)} j(N)$. Since $|N| \geq \delta$, in $M$, by elementarity, we have $|j(N)| \geq j(\delta) > \alpha > \bar \delta^+$, as desired.
 	\end{proof}
\end{theorem}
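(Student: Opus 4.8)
The plan is to mimic the embedding arguments of Theorems~\ref{th:measImpliesSULSWF} and \ref{th:tallImpliesSULS}, pushing the given structure through a cardinal correct extendibility embedding that pushes up $\delta$ and reading off the desired superstructure from its image. The one place where those earlier proofs used well-foundedness or ${<}\eta$-closure to transfer satisfaction will instead be handled by cardinal correctness, which is precisely the property that guarantees correctness about the equicardinality quantifier. First I would fix a $\tau$-structure $N$ with $|N|\geq\delta$ and a cardinal $\overline\delta>\delta$. Choosing an ordinal $\alpha>\overline\delta^+$ large enough that $N\in V_\alpha$, the hypothesis that $\kappa$ is cardinal correct extendible pushing up $\delta$ yields an elementary embedding $j\colon V_\alpha\to M$ with $\crit(j)=\kappa$, $M$ transitive and cardinal correct, and $j(\delta)>\alpha$. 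Since $j\image\tau\subseteq j(\tau)$, the renaming taking $\tau$ to $j\image\tau$ turns (a reduct of) $j(N)\in M$ into a $\tau$-structure, sending a formula $\varphi$ to its renamed version $\overline\varphi$; the pointwise image $\overline N=j\image N$ is then a $\tau$-substructure of $j(N)$, and $j$ witnesses $N\cong\overline N$ as $\tau$-structures.

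The heart of the argument is to show $\overline N\prec_{\cL(I)}j(N)$, and this is exactly where cardinal correctness does the work formerly done by well-foundedness. The key observation is that a transitive cardinal correct model computes the cardinalities of its members correctly: if $A\in M$ and $\mu=|A|^M$, then $M$ believes $\mu$ is a cardinal, so by cardinal correctness $\mu$ is a genuine cardinal of $V$, and a bijection $A\to\mu$ lying in $M\subseteq V$ shows $|A|^V=\mu$. Hence $M$ evaluates the equicardinality quantifier — and therefore all of $\cL(I)$-satisfaction — correctly for any structure that is a set in $M$, in particular for $j(N)$. With this in hand the usual chain of equivalences runs through: $\overline N\models_{\cL(I)}\varphi(j(a))$ iff $N\models_{\cL(I)}\varphi(a)$ via the isomorphism, iff $M$ believes $j(N)\models_{\cL(I)}\overline\varphi(j(a))$ by elementarity of $j$, iff $j(N)$ genuinely satisfies $\overline\varphi(j(a))$ by $M$'s $\cL(I)$-correctness, iff $j(N)\models_{\cL(I)}\varphi(j(a))$ as a $\tau$-structure via the renaming.

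Finally, for the size requirement, $|N|\geq\delta$ gives $M\models|j(N)|\geq j(\delta)>\alpha>\overline\delta$ by elementarity, and cardinal correctness again upgrades this to $|j(N)|^V>\overline\delta$, so $\overline N$ sits as an $\cL(I)$-elementary substructure inside a $\tau$-structure of size at least $\overline\delta$. This establishes $\SULS(\cL(I))\leq\delta$. The only genuinely nonroutine step is the verification that cardinal correctness alone suffices for $M$ to be correct about the $I$-quantifier on $j(N)$; the manipulations with renamings, reducts, and the induced isomorphism are exactly the bookkeeping already carried out in Theorems~\ref{th:measImpliesSULSWF} and \ref{th:tallImpliesSULS}, so I anticipate no further difficulty there.
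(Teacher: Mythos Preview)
Your proposal is correct and follows essentially the same approach as the paper: apply a cardinal-correct extendibility embedding $j:V_\alpha\to M$ with $j(\delta)>\alpha>\overline\delta^+$, identify $N$ with $j\image N\subseteq j(N)$ via the renaming, and use cardinal correctness of $M$ to transfer $\cL(I)$-satisfaction. In fact you spell out in more detail than the paper does the key point that cardinal correctness makes $M$ compute $|A|$ correctly for $A\in M$, and hence evaluate the $I$-quantifier correctly.
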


\begin{theorem}\label{th:ULSimpliesCardCorrect}
If $\ULS(\cL(I))$ exists, then there is a pair $\kappa\leq\gamma$ such that $\kappa$ is cardinal correct extendible cardinal pushing up $\gamma$.
\end{theorem}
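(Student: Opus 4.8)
Write $\delta=\ULS(\cL(I))$. The plan is to mirror the lower-bound arguments of Theorems~\ref{th:ULSWF} and~\ref{th:SULSeta}. The one genuinely new point is that $\cL(I)$ cannot assert well-foundedness directly, so I would instead arrange that all models in play satisfy $\ZFC^{\ast}$ together with Goldberg's cardinal-correctness sentence $\varphi_{\mathsf{cor}}$ and invoke Theorem~\ref{th:cardinalityQuantifierWellFounded} to conclude they are well-founded. Concretely, for an $\aleph$-fixed point $\rho>\delta$ (so that $V_\rho\models\ZFC^{\ast}$) I would take $\mathcal M=(V_\rho,\in,\delta,\Tr)$ with $\Tr$ a truth predicate, and let $\varphi$ be the conjunction of: $\ZFC^{\ast}$; ``$\Tr$ is a truth predicate for me''; $\varphi_{\mathsf{cor}}$; the converse of $\varphi_{\mathsf{cor}}$, asserting that every genuine cardinal of the background universe is a cardinal of the model — here ``$\gamma$ is a genuine cardinal'' is the $\cL(I)$-formula $\forall\beta\in\gamma\ \lnot\,Ixy\,(x\in\gamma)(y\in\beta)$, which over a transitive model holds exactly when $\gamma$ is a cardinal of $V$; ``$\delta$ is a cardinal''; and a clause pinning the ordinal height of the model to $\delta$, e.g.\ ``$\mathrm{Ord}$ is the least $\aleph$-fixed point above $\delta$'', written using the $\omega$-fold iterate of the $\aleph$-function starting at $\delta^+$. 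Every clause holds in $V_\rho$ when $\rho$ is that least $\aleph$-fixed point, because a rank-initial segment of $V$ is cardinal-correct in both directions.

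I would then apply $\ULS(\cL(I))=\delta$ to $\mathcal M$, obtaining, for any prescribed $\aleph$-fixed point $\overline\gamma>|V_\rho|$, a structure $\mathcal N$ of size $\ge\overline\gamma$ with $\mathcal M$ a substructure and $\mathcal N\models_{\cL(I)}\varphi$. By Theorem~\ref{th:cardinalityQuantifierWellFounded}, $\mathcal N$ is well-founded; collapsing, it becomes a transitive model $\overline N$ which, by the two cardinal-correctness clauses, has the same cardinals as $V$ below its ordinal height — in particular it computes the $\aleph$-function correctly — and, because $\mathcal M$ carries a truth predicate, the induced embedding $j\colon V_\rho\to\overline N$ is fully elementary (the standard truth-predicate observation). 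Now $\ZFC^{\ast}$ forces $|\overline N|=\mathrm{Ord}^{\overline N}$, while the correct $\aleph$-function together with the height clause forces $\mathrm{Ord}^{\overline N}$ to be the least $\aleph$-fixed point above $j(\delta)$; since $|\overline N|\ge\overline\gamma$ was arbitrary, $j(\delta)$ is pushed above all $\aleph$-fixed points below $\overline\gamma$, hence is unbounded in the ordinals as $\overline\gamma\to\infty$. In particular $j(\delta)>\delta$, so $j$ has critical point $\kappa\le\delta$.

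With the critical points bounded by $\delta$, I would run the pigeonhole of Theorem~\ref{th:SULSeta}: the admissible $\overline\gamma$ form a proper class but there are only $\le\delta$ possible critical points, so a single $\kappa\le\delta$ occurs for a proper class of $\overline\gamma$. For each such $\overline\gamma$ and each limit $\alpha<\rho$, the restriction $j\restriction V_\alpha\colon V_\alpha\to (V_{j(\alpha)})^{\overline N}$ is elementary with critical point $\kappa$ and cardinal-correct target (a rank-initial segment of the cardinal-correct transitive $\overline N$, the limit hypothesis avoiding the boundary case), and $j(\alpha)\le j(\delta)$, so choosing $\overline\gamma$ large enough that $j(\delta)>\alpha$ produces exactly the embeddings demanded by ``cardinal correct extendible pushing up''. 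Following the passage to $\delta^{\ast}$ in Theorem~\ref{th:SULSeta} one then takes the least ordinal $\gamma$ (necessarily $\ge\kappa$) for which a $\kappa\le\gamma$ is witnessed to push up $\gamma$ in this way, and concludes that $\kappa$ is cardinal correct extendible pushing up $\gamma$.

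The step I expect to be the main obstacle is reconciling two opposing needs. On one hand, since the collapsed target $\overline N$ is only guaranteed to be \emph{cardinal} correct, not literally some $V_\alpha$, one must control its $\aleph$-structure in order to convert ``$\overline N$ is large'' into ``$j(\delta)$ is large''; the equicardinality-quantifier rendering of ``genuine cardinal'' is what makes this possible, but it only works in combination with $\ZFC^{\ast}$ (which is in turn what we need for well-foundedness), and it is the interaction of the two that yields $|\overline N|=\mathrm{Ord}^{\overline N}$. On the other hand, the definition of ``pushing up $\gamma$'' demands embeddings out of $V_\alpha$ for \emph{all} $\alpha>\gamma$, whereas the ULST blow-up naturally produces embeddings with a bounded domain $V_\rho$; securing cofinally large domains — by varying the choice of $\aleph$-fixed point $\rho$ while keeping the critical point fixed, or by first establishing a set-sized characterization of cardinal correct extendibility pushing up $\gamma$ in the spirit of Corollary~\ref{CharPushingUp1} — is where the real care is needed, and is the analogue here of the bookkeeping around $\delta^{\ast}$ in the proof of Theorem~\ref{th:SULSeta}.
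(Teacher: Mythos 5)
Your setup is sound as far as it goes: including $\ZFC^*$ and $\varphi_{\mathsf{cor}}$ and invoking Theorem~\ref{th:cardinalityQuantifierWellFounded} for well-foundedness, using the truth predicate for full elementarity, and using ``there are no $\aleph$-fixed points above $\delta$'' to convert ``the blown-up model is large'' into ``$j(\delta)$ is large'' all work (your extra ``converse of $\varphi_{\mathsf{cor}}$'' clause is redundant: a transitive $N\subseteq V$ automatically regards every genuine cardinal below its height as a cardinal, so cardinal correctness alone already yields the correct $\aleph$-function in the collapse). So for every $\theta$ you do get an elementary $j_\theta\colon V_\rho\to N_\theta$ with cardinal correct transitive target, $j_\theta(\delta)>\theta$, and, after the pigeonhole, a fixed critical point $\kappa\le\delta$. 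The genuine gap is the one you flag at the end and never close: ``cardinal correct extendible pushing up $\gamma$'' requires embeddings with domain $V_\alpha$ for \emph{every} $\alpha>\gamma$, i.e.\ arbitrarily large domains, whereas your construction is stuck at the single bounded domain $V_\rho$ (restrictions only shrink it). Neither of your proposed repairs is routine. You cannot ``vary $\rho$'' while keeping your sentence, because ``no $\aleph$-fixed points above $\delta$'' is true only in the \emph{least} fixed point above $\delta$; weakening it to ``no fixed points above $\alpha$'' for larger $\alpha$ means the blow-up no longer pushes the fixed $\delta$ up, only \emph{some} ordinal below $\alpha$, and you lose control of which one. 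And there is no analogue of Corollary~\ref{CharPushingUp1} here: deriving an extender from $j_\theta$ and taking an ultrapower does not obviously give a cardinal correct target, so no ``set-sized characterization'' is available to boost the domain. Indeed, if your plan could be completed as stated --- a single $\kappa\le\delta$ pushing up $\delta$ itself --- it would answer Question~\ref{ques:CardCorrectBelowULS}, which the paper leaves open; that is a strong sign the missing step is not mere bookkeeping.

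The paper's proof sidesteps exactly this obstacle. For each $\alpha>\delta$ of cofinality $\omega$ it uses the structure $(V_{\rho_\alpha},\in,\langle\alpha_n\mid n<\omega\rangle,\alpha,\Tr)$, where $\rho_\alpha$ is the least $\aleph$-fixed point above $\alpha$ and the constants name a cofinal $\omega$-sequence in $\alpha$. The blow-up forces $j_\alpha(\alpha)\gg\alpha$, hence some named $\alpha_n<\alpha$ has $j_\alpha(\alpha_n)>\alpha$; this produces a critical point $\kappa_\alpha<\alpha$ and a pushed-up ordinal $\gamma_\alpha<\alpha$, i.e.\ a regressive assignment $\alpha\mapsto(\kappa_\alpha,\gamma_\alpha)$ on a stationary class of ordinals of cofinality $\omega$, which a class version of Fodor's lemma stabilizes to one pair $(\kappa,\gamma)$ on a proper class of $\alpha$'s. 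Since the domains $V_{\rho_\alpha}$ of those embeddings are now unbounded, restricting them yields the pushing-up property for that $\gamma$ --- at the price that $\gamma$ is not controlled to lie below $\delta$, which is precisely why the theorem asserts only the existence of some pair $\kappa\le\gamma$. To repair your write-up, switch to this many-structure, Fodor-based scheme rather than the single structure anchored at $\delta$.
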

\begin{proof}
Let $\delta=\ULS(\cL(I))$. Suppose $\alpha>\delta$ is an ordinal of cofinality $\omega$, as witnessed by the cofinal sequence $\la \alpha_n\mid n<\omega\ra$ of ordinals below $\alpha$. Let $\rho_\alpha$ be the least $\aleph$-fixed point above $\alpha$. Consider the structure $$\mathcal M=(V_{\rho_\alpha},\in, \la \alpha_n\mid n<\omega\ra,\alpha,\Tr).$$  Then $\mathcal M$ satisfies the sentence $\varphi$ in the logic $\cL(I)$, which is the conjunction of the sentences:
\begin{enumerate}
\item $\ZFC^*$
\item $\varphi_{\mathsf{cor}}$
\item There are no $\aleph$-fixed points above $\alpha$.
\item $\Tr$ is a truth predicate.
\item $\la \alpha_n\mid n<\omega\ra$ is cofinal in $\alpha$.
\end{enumerate}
Since $\delta=\ULS(\cL(I))$, there is a model $$\mathcal N=(N,{\mathsf E},\la \overline\alpha_n\mid n<\omega\ra,\overline\alpha,\overline{\Tr})$$ of size much larger than $\rho_\alpha$ with $\mathcal M$ as a substructure.  It follows, by Theorem~\ref{th:cardinalityQuantifierWellFounded}, that $\mathsf E$ is well-founded, and hence by collapsing, we can assume without loss of generality that $\mathsf E=\in$, $N$ is transitive, and we get an elementary embedding \hbox{$j_\alpha:V_{\rho_\alpha}\to N_\alpha$} with $\crit(j)\leq\alpha$ and $j(\alpha_n)=\overline\alpha_n$ for every $n<\omega$. Since \hbox{$\text{cof}(\alpha)=\omega$}, it follows that $\kappa_\alpha=\crit(j_\alpha)<\alpha$. We also have that $\mathcal N$ satisfies that the sequence $\la \overline\alpha_n\mid n<\omega\ra$ is cofinal in $\bar\alpha$. Now since $\bar\alpha\gg\alpha$, it follows that some $\overline\alpha_n>\alpha$. Thus, we can let $\gamma_\alpha$ be the least $\alpha_n$ such that $j_\alpha(\alpha_n)>\alpha$. Next, we define the proper class function $F$ on the ordinals of cofinality $\omega$ such that $F(\alpha)$ is the least (ordinal coding a) pair $(\kappa_\alpha,\gamma_\alpha)$ arising from one of these embeddings $j_\alpha$. Since the (ordinal coding the) pair $(\kappa_\alpha,\gamma_\alpha)$ is less than $\alpha$, the function $F$ is regressive on a stationary class, and so it must be constant on a proper class by a weak version of Fodor's Lemma which holds for classes. Let $(\kappa,\gamma)$ be this constant value. Then clearly the pair $(\kappa,\gamma)$ is as desired.
\end{proof}

\begin{corollary}
	The following are equivalent:
	\begin{enumerate}
		\item There is a pair $\kappa \leq \delta$ such that $\kappa$ is cardinal correct extendible pushing up $\delta$.
		\item $\ULS(\mathbb L(I))$ exists.
		\item $\SULS(\mathbb L(I))$ exists.
	\end{enumerate}
\end{corollary}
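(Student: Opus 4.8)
The plan is to derive this corollary purely by chaining the two theorems of this section, closing the cycle through the elementary implication that a strong $\ULS$ number, when it exists, bounds the ordinary $\ULS$ number. Concretely, I would prove $(1)\Rightarrow(3)\Rightarrow(2)\Rightarrow(1)$.

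For $(1)\Rightarrow(3)$ there is nothing to do: this is exactly the first theorem of this section, which states that if $\kappa\leq\delta$ and $\kappa$ is cardinal correct extendible pushing up $\delta$, then $\SULS(\cL(I))$ exists (indeed is at most $\delta$). For $(3)\Rightarrow(2)$ I would simply unwind the definitions. Suppose $\SULS(\cL(I))=\delta$. Given a language $\tau$, an $\cL(I)(\tau)$-sentence $\varphi$, a $\tau$-structure $M\models_{\cL(I)}\varphi$ of size $\gamma\geq\delta$, and a cardinal $\overline\gamma>\gamma$, apply the defining property of $\SULS(\cL(I))$ to $M$: we get a $\tau$-structure $\overline M$ of size at least $\overline\gamma$ with $M\prec_{\cL(I)}\overline M$. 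In particular $M\subseteq\overline M$, and since $\cL$-elementarity preserves the truth of sentences, $\overline M\models_{\cL(I)}\varphi$. Thus the $\ULS$ requirement at $\delta$ is satisfied, so $\ULS(\cL(I))$ exists and is at most $\delta$. (This is just the general fact, valid for every logic $\sL$, that $\ULS(\sL)\leq\SULS(\sL)$ whenever the latter exists, already implicit in the remarks following the definition of $\SULS$.) Finally, $(2)\Rightarrow(1)$ is precisely Theorem~\ref{th:ULSimpliesCardCorrect}, which produces from the existence of $\ULS(\cL(I))$ a pair $\kappa\leq\gamma$ with $\kappa$ cardinal correct extendible pushing up $\gamma$.

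I do not expect any genuine obstacle here: all the mathematical content is absorbed into the two theorems already proved, and the corollary is essentially bookkeeping. The one point I would take care to state explicitly is that the equivalence is between the bare \emph{existence} assertions only — the ordinal $\delta$ in $(1)$ supplied by Theorem~\ref{th:ULSimpliesCardCorrect} need not coincide with $\ULS(\cL(I))$ itself, in contrast with the earlier sections where the $\ULS$ number was pinned down to a specific large cardinal. Accordingly I would keep the statement, as it stands, phrased as an equivalence of existence statements rather than an identification of cardinals.
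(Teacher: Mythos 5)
Your proposal is correct and matches the intended derivation: $(1)\Rightarrow(3)$ is the first theorem of this section, $(3)\Rightarrow(2)$ is the trivial observation that an $\sL$-elementary superstructure is in particular a superstructure satisfying the same sentences (so $\ULS(\cL(I))\leq\SULS(\cL(I))$), and $(2)\Rightarrow(1)$ is Theorem~\ref{th:ULSimpliesCardCorrect}. Your closing caveat that this is only an equivalence of existence statements, with no identification of the witnessing $\delta$ with $\ULS(\cL(I))$ itself, is also exactly right.
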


\begin{theorem}
It is consistent, relative to an extendible cardinal, that $\ULS(\cL(I))$ is above the least supercompact cardinal.
\end{theorem}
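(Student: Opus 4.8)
The plan is to use the forcing extension constructed in the proof of Theorem~\ref{th:cceAboveSupercompact}. In that model there is an extendible cardinal $\chi$, a least supercompact cardinal $\nu<\chi$, and a cardinal $\gamma_0$ with $\nu<\gamma_0<\chi$ at which the $\GCH$ fails (indeed $2^{\gamma_0}\geq\gamma_0^{++}$) while the $\GCH$ holds at every cardinal above $\gamma_0$. Since $\chi$ is extendible it is a strong compactness cardinal for $\cL^2$, hence for $\cL(I)\of\cL^2$, so $\SULS(\cL(I))$ exists by Proposition~\ref{prop:compactImpliesSULS} and therefore so does $\ULS(\cL(I))\leq\SULS(\cL(I))$. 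It remains to show $\ULS(\cL(I))>\nu$; I will in fact obtain $\ULS(\cL(I))>\gamma_0$.

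Suppose toward a contradiction that $\delta:=\ULS(\cL(I))\leq\gamma_0$. Put $\alpha=\gamma_0+\omega$, let $s$ denote the increasing $\omega$-sequence $\langle\gamma_0+n\mid n<\omega\rangle$, let $\rho$ be the least $\aleph$-fixed point above $\alpha$, and consider $\mathcal M=(V_\rho,\in,s,\alpha,\Tr)$ with $\Tr$ a truth predicate for $(V_\rho,\in)$. Then $\mathcal M$ satisfies the $\cL(I)$-sentence $\varphi$ conjoining $\ZFC^*$, $\varphi_{\mathsf{cor}}$, ``$\Tr$ is a truth predicate for me'', ``$s$ is an increasing $\omega$-sequence of ordinals with supremum $\alpha$'', and ``there is no $\aleph$-fixed point above $\alpha$'' (this last holds in $V_\rho$ because $\rho$ is the least $\aleph$-fixed point above $\alpha$ and $\rho\notin V_\rho$). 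Since $|\mathcal M|=\rho>\gamma_0\geq\delta$, for every cardinal $\bar\gamma>\rho$ there is a model $\mathcal N\supseteq\mathcal M$ with $|\mathcal N|\geq\bar\gamma$ and $\mathcal N\models_{\cL(I)}\varphi$; fix one with $|\mathcal N|$ large enough that the transitive collapse below contains an $\aleph$-fixed point above $\alpha$. By Theorem~\ref{th:cardinalityQuantifierWellFounded} (via $\ZFC^*$ and $\varphi_{\mathsf{cor}}$) $\mathcal N$ is well-founded, so collapsing we may take it transitive and cardinal correct and obtain an elementary embedding $j\colon V_\rho\to N$ (elementary thanks to $\Tr$). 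Since $N$ contains an $\aleph$-fixed point above $\alpha$, while $N\models$``no $\aleph$-fixed point above $j(\alpha)$'', we get $j(\alpha)>\alpha$. Hence $\crit(j)<\alpha$: otherwise $j$ fixes every ordinal below $\alpha$, so $j(s)=s$ and $j(\alpha)=\sup\mathrm{ran}(j(s))=\sup\mathrm{ran}(s)=\alpha$, a contradiction. As $\crit(j)$ is a cardinal and the interval $(\gamma_0,\gamma_0+\omega)$ contains no cardinals, $\crit(j)\leq\gamma_0$. Some entry $j(s)(n)=j(\gamma_0+n)=j(\gamma_0)+n$ exceeds $\alpha=\gamma_0+\omega$, which forces $j(\gamma_0)>\gamma_0$. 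Finally $V_\rho\models 2^{\gamma_0}>\gamma_0^+$, since the $\GCH$ fails at $\gamma_0$ and $V_\rho$ (as $\rho>\gamma_0+2$) computes $\mathcal P(\gamma_0)$ and $\gamma_0^+$ correctly; by elementarity $N\models 2^{j(\gamma_0)}>j(\gamma_0)^+$, and since $N$ is transitive and cardinal correct this reflects to $V$, so the $\GCH$ fails at the cardinal $j(\gamma_0)>\gamma_0$, contradicting that the $\GCH$ holds above $\gamma_0$. Thus $\ULS(\cL(I))>\gamma_0>\nu$, and $\ULS(\cL(I))$ exists, as required.

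I expect the main difficulty to be seeing how to combine the ingredients correctly. The naive route --- applying Theorem~\ref{th:ULSimpliesCardCorrect} to get a cardinal correct extendible cardinal pushing up some $\gamma$ and then placing $\gamma$ above $\nu$ via Theorem~\ref{th:cceAboveSupercompact} --- does not work: the implication ``a pair $(\kappa,\gamma)$ exists $\Rightarrow\SULS(\cL(I))\leq\gamma$'' only bounds $\ULS(\cL(I))$ from above, and the $\gamma$ produced by Theorem~\ref{th:ULSimpliesCardCorrect} (via Fodor over a proper class) carries no a priori bound by $\ULS(\cL(I))$. The fix is to feed the $\GCH$ pattern of the model directly into the argument of Theorem~\ref{th:ULSimpliesCardCorrect}, anchoring on $\alpha=\gamma_0+\omega$ so that the arithmetic gap $(\gamma_0,\gamma_0+\omega)$ contains no cardinals and hence pins $\crit(j)$ at or below $\gamma_0$. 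The remaining technical points, both essentially as in Theorem~\ref{th:ULSimpliesCardCorrect}, are: (i) that the ``no $\aleph$-fixed point above $\alpha$'' clause together with the named cofinal $\omega$-sequence really yields $j(\alpha)>\alpha$ and $\crit(j)<\alpha$; and (ii) that cardinal correctness of the transitive target $N$ transfers ``$2^{j(\gamma_0)}>j(\gamma_0)^+$'' down to $V$, which holds because $N\of V$, $N$ computes cardinal successors correctly, and a bijection existing in $N$ is a bijection in $V$.
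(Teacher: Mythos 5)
Your proof is correct and follows essentially the same route as the paper: work in the model of Theorem~\ref{th:cceAboveSupercompact}, apply the $\ULS$ property to a structure $(V_\rho,\in,\dots,\Tr)$ satisfying $\ZFC^*$, $\varphi_{\mathsf{cor}}$, and ``no $\aleph$-fixed points above\dots'', collapse to a transitive cardinal correct $N$ with an elementary $j$, and contradict the $\GCH$ pattern above $\gamma_0$ via cardinal correctness. Your extra apparatus with $\alpha=\gamma_0+\omega$ and the cofinal $\omega$-sequence $s$ (to locate $\crit(j)$) is harmless but unnecessary, since $j(\gamma_0)>\gamma_0$ already follows, as in the paper, from the clause ``no $\aleph$-fixed points above $\gamma_0$'' together with the large size of the transitive target.
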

\begin{proof}
We work in the model $V[G][g]$ from the proof of Theorem~\ref{th:cceAboveSupercompact}, where $\nu$ was the least supercompact cardinal, $\chi>\nu$ was extendible, $2^\gamma=\gamma^{++}$ for a cardinal \hbox{$\nu<\gamma<\chi$}, and the $\GCH$ held above $\gamma$. Note that since we have an extendible cardinal in this model, $\ULS(\cL(I))$ exists. Suppose that $\ULS(\cL(I))\leq\nu$. Consider the model $(V_\rho,\in,\gamma)$, where $\rho$ is the least $\aleph$-fixed point above $\gamma$. Then by our usual arguments, there is a model $\mathcal N=(N,\in,\overline\gamma)$ of size much larger than $\rho$ that is cardinal correct and we have an elementary embedding $j:V_\rho\to N$ such that $j(\gamma)=\overline\gamma\gg\gamma$. It follows, by elementarity, that $N$ satisfies that $2^{\overline\gamma}>\overline{\gamma}^+$ and it must be correct about this by cardinal correctness. Thus, we have reached a contradiction showing that $\ULS(\cL(I))>\gamma>\nu$.
\end{proof}
\begin{question}\label{ques:CardCorrectBelowULS}
If $\ULS(\cL(I))=\delta$ exists, is there a cardinal $\kappa\leq\delta$ such that $\kappa$ is cardinal correct extendible pushing up $\delta$?
\end{question}
It follows from Theorem~\ref{th:ULSimpliesCardCorrect} that if $\ULS(\cL(I))$ exists, then either there is a strongly compact cardinal or there is an inaccessible $\alpha$ such that $V_\alpha$ satisfies that there is strongly compact cardinal. Below we slightly improve this result by showing that if $\ULS(\cL(I))=\delta$, then either there is a strongly compact cardinal ${\leq}\delta$ or there is an inaccessible $\alpha$ such that $V_\alpha$ satisfies that there is a strongly compact cardinal. This appears to be a step in the direction of getting a positive answer to Question~\ref{ques:CardCorrectBelowULS}.

Suppose that $j:V\to M$ is the ultrapower by a fine $\kappa$-complete ultrafilter $\mu$ on $P_\kappa(\lambda)$. Consider the restriction $j:V_{\lambda+2}\to V_{j(\lambda)+2}^M$, and observe that it suffices to recover $\mu$ since $P(P_\kappa(\lambda))\subseteq V_{\lambda+2}$. Next, observe that by using a flat pairing function\footnote{A \emph{flat pairing function} has the property that it does not increase the rank of the pair beyond the ranks of its elements.} and coding we can assume that $V_{\lambda+2}$ is closed under functions $f:P_\kappa(\lambda)\to V_{\lambda+2}$. Such a function $f$ consists of pairs $(A,B)$ where $A\subseteq\lambda$ and $B\subseteq V_{\lambda+1}$. Each such pair is, in turn, coded by the collection of pairs $(A,b)$ where $b\in B$ so that $b\in V_{\lambda+1}$. Thus, $f$ is coded by the collection of pairs $(A,b)$ where $f(A)=B$ and $b\in B$. Using a flat pairing function, each of the pairs $(A,b)\in V_{\lambda+1}$, and therefore $f\subseteq V_{\lambda+1}$ is in $V_{\lambda+2}$. We can thus conclude that all the information about a $\lambda$-compactness embedding is already captured by the restriction of the embedding to $V_{\lambda+2}$. Finally, since every element of $V_{j(\lambda)+2}^M$ has the form $j(f)([id]_\mu)$, where $f\in V_{\lambda+2}$, we get that $V_{j(\lambda)+2}^M$ has size at most $|V_{\lambda+2}|$.

\begin{theorem}
If $\ULS(\cL(I))=\delta$, then there is a strongly compact cardinal $\kappa\leq\delta$ or there is an inaccessible cardinal $\alpha$ such that $V_\alpha$ satisfies that there is a strongly compact cardinal.
\end{theorem}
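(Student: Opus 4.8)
The plan is to prove the contrapositive‑style dichotomy: assume there is no strongly compact cardinal ${\le}\delta$, and produce an inaccessible $\alpha$ with $V_\alpha\models$ ``there is a strongly compact cardinal''. The engine is a careful application of the $\ULS$ property, fed into Goldberg's Lemma~\ref{le:strongCompactnessEmbedding} together with Theorem~\ref{th:lambdaCompactness} and the observation just recorded that a $\lambda$‑compactness embedding is captured by its restriction to $V_{\lambda+2}$, with target of size at most $|V_{\lambda+2}|$. For each cardinal $\kappa\le\delta$ let $\lambda_\kappa$ be least such that $\kappa$ is not $\lambda_\kappa$‑compact (this exists since $\kappa$ is not strongly compact), fix an $\aleph$‑fixed point $\theta$ of cofinality $\omega$ with $\theta>\sup_{\kappa\le\delta}\lambda_\kappa$ (so by monotonicity no $\kappa\le\delta$ is $\theta$‑compact), fix a cofinal sequence $\langle\theta_n\rangle_{n<\omega}$ in $\theta$, and let $\rho$ be the least $\aleph$‑fixed point above $\theta$. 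Because $\theta$‑compactness of a cardinal is a statement about the set $V_{\theta+2}$ (by the observation above, realized by embeddings living in $V_{\theta+3}$), the assertion ``no cardinal ${\le}\delta$ is $\theta$‑compact'' is first‑order expressible over $(V_\rho,\in,\delta,\theta)$ and holds there.

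I would then apply $\ULS(\cL(I))=\delta$ to the structure $\mathcal M=(V_\rho,\in,\delta,\theta,\langle\theta_n\rangle_{n<\omega},\Tr)$, with $\Tr$ a truth predicate for $(V_\rho,\in)$, and to the $\cL(I)$‑sentence $\varphi$ asserting: $\ZFC^*$; $\varphi_{\mathsf{cor}}$; ``$\Tr$ is a truth predicate''; ``$\langle\theta_n\rangle$ is cofinal in $\theta$''; ``$\theta$ is the largest $\aleph$‑fixed point''; and ``no cardinal ${\le}\delta$ is $\theta$‑compact''. Taking an extension $\mathcal N=(N,\mathsf E,\bar\delta,\bar\theta,\langle\bar\theta_n\rangle,\overline\Tr)\models_{\cL(I)}\varphi$ of size much larger than $\rho$ with $\mathcal M$ a substructure, Theorem~\ref{th:cardinalityQuantifierWellFounded} (applicable since $\ZFC^*$ and $\varphi_{\mathsf{cor}}$ are asserted) gives that $\mathsf E$ is well‑founded and $N$ is cardinal correct; collapsing and using the truth predicate, we obtain an elementary $j:V_\rho\to N$ with $N$ transitive and cardinal correct, $j(\delta)=\bar\delta$ and $j(\theta)=\bar\theta$. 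Since $|N|$ is much larger than the $\aleph$‑fixed point $\rho$ and $N$ is a cardinal correct model of $\ZFC^*$, the ordinal height of $N$ exceeds $\rho$; as $N$ is cardinal correct its $\aleph$‑function agrees with the true one below $\Ord\cap N$, so $\rho$ is an $\aleph$‑fixed point of $N$, and this contradicts ``$\bar\theta$ is the largest $\aleph$‑fixed point'' unless $\bar\theta>\theta$. Hence $j(\theta)>\theta$, so $\crit(j)\le\theta$.

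The step I expect to be the main obstacle is pushing this down to $\crit(j)\le\delta$. If $\kappa_0:=\crit(j)$ lay in $(\delta,\theta]$ then $j(\delta)=\delta$, and one must derive a contradiction by exploiting that $\delta$ is named: the analysis of the images $\kappa_{n+1}=j(\kappa_n)$ below, combined with Theorem~\ref{th:lambdaCompactness}, shows that a discontinuity of $j$ at a successor of a cardinal in $[\kappa_0,j(\kappa_0))$ produces a uniform $\kappa_0$‑complete ultrafilter there (using cardinal correctness), which would only contradict the choice of $\theta$ if one already knows $\kappa_0\le\delta$. Converting the bound $\crit(j)\le\theta$ into the sharp bound $\crit(j)\le\delta$ — rather than merely collapsing everything to $\theta$ — is the delicate point, and is where the precise interplay of the named ordinals $\delta,\theta,\rho$, the $V_{\theta+2}$‑locality of $\theta$‑compactness, and cardinal correctness of $N$ must be used (in the spirit of the proof of Theorem~\ref{thm:ULSSecondOrderLogic}, where the corresponding role is played by the suprema $\alpha^*,\beta^*$ of extendibility thresholds).

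Once $\kappa_0=\crit(j)\le\delta$ is secured, iterate $\kappa_{n+1}=j(\kappa_n)$ so long as $\kappa_n<\rho$. Suppose the iteration ``escapes'', i.e.\ $j(\kappa_m)\ge\rho$ for some $m$, or all $\kappa_n<\rho$ but $\sup_n\kappa_n=\rho$. Then for every cardinal $\beta\in[\kappa_0,\rho)$ one has $\beta<j(\beta)$ (pick $i$ least with $\beta<\kappa_{i+1}$; then $\kappa_i\le\beta<j(\kappa_i)\le j(\beta)$), so by cardinal correctness $j$ is discontinuous at $\beta^+$ and $\{X\subseteq\beta^+:\sup j\image\beta^+\in j(X)\}$ is a uniform $\kappa_0$‑complete ultrafilter on $\beta^+$; as this covers all successor cardinals in $[\kappa_0,\theta^+]$, Theorem~\ref{th:lambdaCompactness} gives that $\kappa_0$ is $\theta^+$‑compact, hence $\theta$‑compact, contradicting $\kappa_0\le\delta$. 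Therefore the iteration is bounded: $\lambda:=\sup_n\kappa_n<\rho$. Since $j$ maps the $\omega$‑sequence $\langle\kappa_n\rangle$ to $\langle\kappa_{n+1}\rangle$, we get $j(\lambda)=\lambda$, so $j$ restricts to an elementary embedding $j\restriction V_\lambda:V_\lambda\to V_\lambda^N$ with $V_\lambda^N$ cardinal correct and $\lambda=\sup_n (j\restriction V_\lambda)^n(\kappa_0)$. By Goldberg's Lemma~\ref{le:strongCompactnessEmbedding}, $j(\kappa_0)$ is inaccessible and $V_{j(\kappa_0)}\models$ ``$\kappa_0$ is strongly compact''. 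Thus $\alpha=j(\kappa_0)$ is the desired inaccessible cardinal whose $V_\alpha$ satisfies that there is a strongly compact cardinal.
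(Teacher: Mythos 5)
Your overall skeleton (apply $\ULS(\cL(I))$ to a cardinal correct, truth-predicate structure, use Theorem~\ref{th:cardinalityQuantifierWellFounded} to get a transitive cardinal correct target, then run the Ketonen/Kunen--Prikry discontinuity analysis on the iterates of the critical point and finish with Goldberg's Lemma~\ref{le:strongCompactnessEmbedding}) is the same as the paper's, and your final paragraph is essentially correct \emph{once} $\crit(j)\le\delta$ is known. But the step you yourself flag as ``the main obstacle'' is a genuine gap, not a routine delicacy, and in your setup it cannot be closed. Suppose $\delta<\kappa_0=\crit(j)\le\theta$ and the iterates $j^n(\kappa_0)$ are unbounded in $\rho$ (or leave $V_\rho$). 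Your discontinuity argument then only shows that $\kappa_0$ is $\theta^+$-compact; since $\kappa_0>\delta$, this contradicts nothing (your hypothesis only rules out $\theta$-compactness of cardinals $\le\delta$) and it yields neither disjunct of the theorem: a partially compact cardinal strictly above $\delta$ gives no strongly compact cardinal $\le\delta$ and no inaccessible $\alpha$ with $V_\alpha\models$ ``there is a strongly compact cardinal.'' Naming $\delta$ in the structure does not help, because nothing in your sentence forces $j$ to move $\delta$; the ``largest $\aleph$-fixed point'' clause only forces $j(\theta)>\theta$. Even the natural patch --- run the construction for a proper class of $\theta$'s and stabilize $\crit(j_\theta)$ by the class form of Fodor --- only produces a fully strongly compact cardinal strictly above $\delta$, which again is neither disjunct. (The other subcases are fine: if $\crit(j)\le\delta$ your argument works, and if $\crit(j)>\delta$ with bounded iterates Goldberg's lemma still gives the second disjunct; the irreducible problem is exactly the subcase above.)

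The paper avoids this by a different decomposition. It splits on whether some $\kappa\le\delta$ is $\delta^+$-compact. In the first case it takes $\rho$ to be the least $\aleph$-fixed point above $\delta$ itself and asserts ``there are no $\aleph$-fixed points above $\delta$,'' so that $j(\delta)>\delta$ is forced and $\crit(j)\le\delta$ comes for free; the threshold $\delta^+$ lies below $\rho$, so no large $\theta$ needs to be named. In the second case it lets $\lambda$ be the least successor cardinal at which compactness fails for every $\kappa\le\delta$ and, crucially, includes in the transferred $\cL(I)$-sentence the assertion that for every successor $\theta$ with $\delta^+\le\theta<\lambda$ there is an elementary embedding $j_\theta:V_{\lambda+2}\to M_\theta$ with critical point $\le\delta$ and a covering set $d$ (using the preceding observation that such compactness embeddings are coded in $V_{\lambda+2}$). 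Then, in the problematic regime $\delta<\crit(j)<\lambda$, it composes $j$ with the internal embedding $j_{\bar\lambda}$ of $N$; the composition has critical point $\le\delta$ and still covers $j\restriction\lambda$, so one derives the contradiction that some $\kappa\le\delta$ is $\lambda$-compact. This internalize-and-compose device (together with the separate treatment of $\crit(j)=\lambda$ and $\crit(j)\le\delta$) is precisely the missing ingredient in your proposal; without some substitute for it, the proof does not go through.
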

\begin{proof}
First, let's suppose that no cardinal $\kappa\leq\delta$ is $\delta^+$-compact. Consider the structure $$\mathcal M=(V_\rho,\in, \delta,\Tr),$$ where $\rho$ is the least $\aleph$-fixed point above $\delta$ and $\Tr$ is a truth predicate for $(V_\rho, \in)$.  Then $\mathcal M$ satisfies the sentence $\varphi$ in the logic $\cL(I)$, which is the conjunction of the sentences:
\begin{enumerate}
\item $\ZFC^*$
\item $\varphi_{\mathsf{cor}}$
\item There are no $\aleph$-fixed points above $\delta$.
\item $\Tr$ is a truth predicate.
\end{enumerate}
Since $\ULS(\cL(I))=\delta$, there is a model $$\mathcal N=(N,\mathsf E,\overline \delta,\overline \Tr)\models_{\cL(I)}\varphi$$ of size much larger than $\rho$ with $\mathcal M$ as a substructure. It follows by Theorem~\ref{th:cardinalityQuantifierWellFounded} that $\mathsf E$ is well-founded, and hence by collapsing, we can assume without loss of generality that $\mathsf E=\in$, $N$ is transitive, and we get an elementary embedding $$j:V_\rho\to N$$ such that $j(\delta)=\overline\delta$. Since $|N|$ is much larger than $\rho$ and $N$ believes that there are no $\aleph$-fixed points above $\overline\delta$, it follows that $\overline\delta>\delta$, which means $j$ has a critical point and $\crit(j)=\kappa\leq\delta$. If $\crit(j)=\delta$, then, since $j(\delta)\gg \delta$, we have \hbox{$\delta^+<j(\delta^+)=j(\delta)^+$} (by cardinal correctness), which can be used to obtain a uniform $\delta$-complete ultrafilter on $\delta^+$, which, by Theorem~\ref{th:lambdaCompactness}, means that $\delta$ is is $\delta^+$-compact, contradicting our assumption. Thus, $\crit(j)=\kappa<\delta$.

First, suppose that $j(\kappa)\geq\delta$, and let's argue that $\kappa$ is $\delta^+$-compact, which would contradict our assumption.  By Theorem~\ref{th:lambdaCompactness}, it suffices to show that every successor cardinal $\beta^+$ in the interval $[\kappa,\delta^+]$ carries a uniform $\kappa$-complete ultrafilter. And again, for this it suffices to show that $\beta^+<j(\beta^+)=j(\beta)^+$ because then $j$ is discontinuous at $\beta^+$, namely $j\image\beta^+$ is bounded in $j(\beta^+)$. This is clearly true for $\beta=\delta$. So fix $\kappa\leq\beta<\delta$ and consider $\beta^+$. Since $j(\beta)\geq j(\kappa)\geq\delta$, it follows that $j(\beta)>\beta$, and hence $j(\beta)^+=j(\beta^+)>\beta^+$.  This concludes the argument that $\kappa$ is $\delta^+$-compact, which is the desired contradiction, showing that $j(\kappa)<\delta$. This means we can apply $j$ to $j(\kappa)=\gamma$ to get $j^2(\kappa)=j(\gamma)$. Again, assuming that $j(\gamma)\geq\delta$, we will argue that $\kappa$ must be $\delta^+$-compact, and so will be able to conclude that $j(\gamma)<\delta$. By the same argument as before, we get a discontinuity for successors of $\gamma\leq\beta\leq \delta$. But if $\kappa\leq \beta<\gamma$, then $\beta<\gamma=j(\kappa)\leq j(\beta)$. Repeating this argument, we get that $j^n(\kappa)<\delta$ for all $n<\omega$. Thus, by Lemma \ref{le:strongCompactnessEmbedding}, we get that $j(\kappa)$ is inaccessible and  $V_{j(\kappa)}$ satisfies that there is a strongly compact cardinal. Thus, we proved what we promised.

Otherwise, for some $\kappa\leq\delta$, $\kappa$ is $\delta^+$-compact. So we can let $\lambda>\delta^+$ be the least successor cardinal such that no $\kappa\leq\delta$ is $\lambda$-compact. Let $\rho$ be the least $\aleph$-fixed point above $|V_{\lambda+2}|$. In particular, for every successor cardinal $\theta$ with $\delta^+\leq\theta<\lambda$, $V_{\rho}$ will have an embedding $j_\theta:V_{\lambda+2}\to M_\theta$ by a fine $\kappa_\theta$-complete ultrafilter on $\mathcal P_{\kappa_\theta}(\theta)$ for some $\kappa_\theta\leq\delta$. This is the case because such an embedding has size at most $|V_{\lambda+2}|$.  Consider the structure $$\mathcal M=( V_\rho,\in,\delta,\lambda, \Tr),$$ where $\Tr$ is a truth predicate for $(V_{\rho},\in)$. Then $\mathcal M$ satisfies the sentence $\varphi$ in the logic $\cL(I)$, which is the conjunction of the sentences:
\begin{enumerate}
\item $\ZFC^*$
\item $\varphi_{\mathsf{cor}}$
\item There are no $\aleph$-fixed points above $|V_{\lambda+2}|$.
\item For every successor $\delta^+\leq \theta<\lambda$, there an elementary embedding \hbox{$j_\theta:V_{\lambda+2}\to M_\theta$} with $\crit(j_\theta)=\kappa_\theta\leq\delta$, and $j_\theta\image\theta\subseteq d$ with $|d|^M<j(\kappa_\theta)$.
\item $\Tr$ is a truth predicate.
\end{enumerate}
Since $\delta=\ULS(\cL(I))$, there is a model $$\mathcal N=( N,{\rm E},\overline\delta,\overline\lambda,\overline \Tr)\models_{\cL(I)}\varphi$$ of size much larger than $\rho$ with $\mathcal M$ as a substructure. It follows by Theorem~\ref{th:cardinalityQuantifierWellFounded} that $\mathsf E$ is well-founded, and hence by collapsing, we can assume without loss of generality that $\mathsf E=\in$, $N$ is transitive, and we get an elementary embedding $$j:V_\rho\to N$$ such that $j(\delta)=\overline\delta$ and $j(\lambda)=\overline\lambda$. Since $|N|$ is much larger than $\rho$ and $N$ believes that there are no $\aleph$-fixed points above $|V_{\overline\lambda+2}|$, it follows that $\overline\lambda>\lambda$, and thus $\crit(j)\leq\lambda$.

Let's suppose that $\crit(j)=\lambda$. Then $N$ is correct about $P(\lambda)$ and $j(\kappa)=\kappa$ for every $\kappa\leq\delta$. By elementarity, $\mathcal N$ satisfies that $\overline\lambda>\lambda$ is the least successor cardinal that is a counterexample for compactness for cardinals $\kappa\leq\delta$. Thus, $\mathcal N$ satisfies that there is $\kappa\leq\delta$ that is $\lambda$-compact, so that it has a fine $\kappa$-complete ultrafilter on $P_\kappa(\lambda)$. But since $N$ has the correct powerset of $\lambda$, this object really is a fine $\kappa$-complete ultrafilter on $P_\kappa(\lambda)$ which contradicts that $\kappa$ is not $\lambda$-compact. Thus, $\chi=\crit(j)<\lambda$. The rest of the argument splits into two cases based on whether $\chi>\delta$.

We first suppose that $\delta<\chi<\lambda$. In this case, there is $\kappa\leq\delta$ such that  $N$ thinks that $\kappa$ is $\lambda$-compact and, by (4), $N$ has an elementary embedding $$j_{\overline\lambda}:V_{\overline\lambda+2}^N\to M_{\overline\lambda}$$ with $\crit(j_{\overline\lambda})=\kappa$ and $j_{\overline\lambda}\image{\lambda}\subseteq d$ in $M_{\overline\lambda}$ with $|d|^{M_{\overline\lambda}}<j_\lambda(\kappa)$. Consider the composition $$j_{\overline\lambda}\circ j:V_{\lambda+2}\to M_{\overline\lambda}$$ (since $j(V_{\lambda+2})=V_{\overline\lambda+2}^N$). Observe that $\crit(j_{\overline\lambda}\circ j)=\kappa$ by our assumption that $\delta<\chi$.

First, let's suppose that $j\image\lambda\subseteq\lambda$. If $\beta<\lambda$, then $j_{\bar\lambda}\circ j(\beta)=j_{\bar\lambda}(j(\beta))$, where $j(\beta)=\beta'<\lambda$ by our assumption. So $j_{\bar\lambda}\circ j(\beta)=j_{\bar\lambda}(\beta')$ for some $\beta'<\lambda$. Thus $j_{\bar\lambda}\circ j\image\lambda\subseteq j_{\bar\lambda}\image\lambda\subseteq d$ and $|d|^{M_{\bar\lambda}}<j_{\bar\lambda}(\kappa)=j_{\bar\lambda}\circ j(\kappa)$. Thus, we can use the embedding $j_{\bar\lambda}\circ j$ to derive a fine $\kappa$-complete ultrafilter on $P_\kappa(\lambda)$, contradicting our assumption that $\kappa$ is not $\lambda$-compact.

Thus, we can choose $\gamma<\lambda$ to be least such that $j(\gamma)\geq\lambda$. Again, we will aim to show that $\kappa$ is $\lambda$-compact, deriving a contradiction. For $\gamma\leq\beta^+\leq\lambda$, we have $\beta<j(\beta)\leq j_{\bar\lambda}\circ j(\beta)$ and so we have the desired discontinuity of $j_{\bar\lambda}\circ j$. So it remains to show that we have uniform $\kappa$-complete ultrafilters on successors $\delta<\beta^+<\gamma$. For this it suffices to show that $\kappa$ is $\beta^+$-compact, but this follows, by using $j_{\bar\lambda}\circ j$ and observing that $j_{\bar\lambda}\circ j\image \beta^+\subseteq j_{\bar\lambda}\image\lambda\subseteq d$. So we have again derived a contradiction.

Finally, assume that $\chi \leq \delta < \lambda$. Then it follows that $\chi$ is not $\lambda$-compact. Now if $j(\chi) \geq \lambda$, we get for $\chi \leq \beta^+ \leq \lambda$ that $\beta < j(\beta)$ and thus a discontinuity of $j$. In particular, $\chi$ is $\lambda$-strongly compact, which is again a contradiction. And if $j(\chi) < \lambda$, we can reason as before to show that $j^n(\chi) < \lambda$ for all $n \in \omega$. Thus, with $\gamma = \sup\{j^n(\chi) \colon n \in \omega \} \leq \lambda$, we have that $j$ restricts to $j: V_\gamma \rightarrow V_\gamma^N$ and the latter is cardinal correct by correctness of $N$ and hence, by Lemma \ref{le:strongCompactnessEmbedding}, we get that $j(\chi)$ is inaccessible and $V_{j(\chi)}$ believes that $\chi$ is a strongly compact cardinal.
\end{proof}

\bibliography{uls}

\end{document}